\newcommand{\qdn}{\hspace*{-1.5mm}}
\newcommand{\qqdn}{\hspace*{-2.5mm}}
\newcommand{\xqdn}{\hspace*{-5.0mm}}
\newcommand{\xxqdn}{\hspace*{-10mm}}
\newcommand{\sst}{\scriptstyle}
\newcommand{\ffnk}[4]{\left[\qdn\ba{#1}#3\\#4\ea{\!\Big|\:#2}\right]}
\newcommand{\binm}{\binom}
\newcommand{\nnm}{\nonumber}
\newcommand{\be}{\begin{equation}}
\newcommand{\ee}{\end{equation}}
\newcommand{\ba}{\begin{array}}
\newcommand{\ea}{\end{array}}
\newcommand{\bmn}{\begin{eqnarray}}
\newcommand{\emn}{\end{eqnarray}}
\newcommand{\bnm}{\begin{eqnarray*}}
\newcommand{\enm}{\end{eqnarray*}}
\newcommand{\bln}{\begin{subequations}}
\newcommand{\eln}{\end{subequations}}
\newtheorem{thm}{Theorem}
\newtheorem{lemm}[thm]{Lemma}
\newtheorem{corl}[thm]{Corollary}
\newtheorem{entry}{Entry}
\newcommand{\bbtm}[4]{\bibitem{kn:#1}{#2,}~{#3,}~{#4.}}
\newcommand{\cito}[1]{\cite{kn:#1}}
\newcommand{\citu}[2]{\cite[#2]{kn:#1}}
\begin{document} 
{
\title{Whipple-type $_3F_2$-series
 and summation formulae involving generalized harmonic
numbers}
\author{$^{a}$Chuanan Wei, $^b$Xiaoxia Wang}

\footnote{\emph{2010 Mathematics Subject Classification}: Primary
05A10 and Secondary 33C20.}

\dedicatory{
$^A$Department of Medical Informatics\\
  Hainan Medical University, Haikou 571199, China\\
$^B$Department of Mathematics\\
  Shanghai University, Shanghai 200444, China}

\thanks{ \emph{Email addresses}: weichuanan78@163.com (C. Wei),
 xwang913@126. com (X. Wang)}

 \keywords{Hypergeometric series; Whipple's $_3F_2$-series identity; Derivative operator; Harmonic numbers}

\begin{abstract}
By means of the derivative operator and Whipple-type $_3F_2$-series
identities, two families of summation formulae involving generalized
harmonic numbers are established.
\end{abstract}

\maketitle\thispagestyle{empty}
\markboth{C. Wei}
         {Summation formulae involving generalized harmonic numbers}

\section{Introduction}
For a complex variable $x$, define the shifted factorial to be
\[(x)_{0}=1\quad \text{and}\quad (x)_{n}
=x(x+1)\cdots(x+n-1)\quad \text{with}\quad n\in\mathbb{N}.\]
 Following Andrews, Askey and Roy~\citu{andrews-r}{Chapter 2}, define the hypergeometric series by
\[\qdn_{r}F_s\ffnk{cccc}{z}{a_1,a_2,\cdots,a_r}{b_1,b_2,\cdots,b_s}
 \:=\:\sum_{k=0}^\infty\frac{(a_1)_k(a_2)_k\cdots(a_r)_k}{(b_1)_k(b_2)_k\cdots(b_s)_k}\frac{z^k}{k!},\]
where $\{a_{i}\}_{i\geq1}$ and $\{b_{j}\}_{j\geq1}$ are complex
parameters such that no zero factors appear in the denominators of
the summand on the right hand side. Then Whipple's $_3F_2$-series
identity (cf. \citu{andrews-r}{p. 149}) can be stated as
 \bmn\label{whipple}
 _3F_2\ffnk{cccc}{1}{a,1-a,b}{c,1+2b-c}
=\frac{\Gamma(\frac{c}{2})\Gamma(\frac{1+c}{2})\Gamma(b+\frac{1-c}{2})\Gamma(b+\frac{2-c}{2})}
{\Gamma(\frac{a+c}{2})\Gamma(\frac{1-a+c}{2})\Gamma(b+\frac{1+a-c}{2})\Gamma(b+\frac{2-a-c}{2})},
 \emn
where $Re(b)>0$ and $\Gamma(x)$ is the well-known gamma function
\[\xqdn\Gamma(x)=\int_{0}^{\infty}t^{x-1}e^{-t}dt\quad\text{with}\quad Re(x)>0.\]

For a complex number $x$ and a positive integer $\ell$, define
generalized harmonic numbers of $\ell$-order to be
\[H_{0}^{\langle \ell\rangle}(x)=0
\quad\text{and}\quad
 H_{n}^{\langle\ell\rangle}(x)=\sum_{k=1}^n\frac{1}{(x+k)^{\ell}}
 \quad\text{with}\quad n\in\mathbb{N}.\]
When $x=0$, they become harmonic numbers of $\ell$-order
\[H_{0}^{\langle \ell\rangle}=0
\quad\text{and}\quad
  H_{n}^{\langle \ell\rangle}
  =\sum_{k=1}^n\frac{1}{k^{\ell}} \quad\text{with}\quad n\in\mathbb{N}.\]
  Fixing $\ell=1$ in $H_{0}^{\langle \ell\rangle}(x)$ and $H_{n}^{\langle \ell\rangle}(x)$, we obtain
generalized harmonic numbers
\[H_{0}(x)=0
\quad\text{and}\quad H_{n}(x)
  =\sum_{k=1}^n\frac{1}{x+k} \quad\text{with}\quad n\in\mathbb{N}.\]
When $x=0$, they reduce to classical harmonic numbers
\[H_{0}=0\quad\text{and}\quad
H_{n}=\sum_{k=1}^n\frac{1}{k} \quad\text{with}\quad
n\in\mathbb{N}.\]

For a differentiable function $f(x)$, define the derivative operator
$\mathcal{D}_x^i$ by
 \bnm
\mathcal{D}_x^if(x)=\frac{d^i}{dx^i}f(x).
 \enm
When $i=1$, the corresponding sign can be simplified to
$\mathcal{D}_x$.

In order to explain the relation of the derivative operator and
generalized harmonic numbers, we introduce the following lemma.

\begin{lemm} \label{lemm-a}
 Let $x$ and  $\{a_j,b_j,c_j,d_j\}_{j=1}^s$ be all complex
numbers. Then
 \bnm
\mathcal{D}_x\prod_{j=1}^s\frac{a_jx+b_j}{c_jx+d_j}=\prod_{j=1}^s\frac{a_jx+b_j}{c_jx+d_j}
 \sum_{j=1}^s\frac{a_jd_j-b_jc_j}{(a_jx+b_j)(c_jx+d_j)}.
 \enm
\end{lemm}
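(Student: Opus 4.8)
The plan is to recognize the product as a function whose logarithmic derivative splits cleanly across the factors, and then to simplify each resulting term. Write $P(x)=\prod_{j=1}^{s}\frac{a_jx+b_j}{c_jx+d_j}$ for the left-hand factor. The key observation is that for a product of factors the derivative is governed by the sum of the logarithmic derivatives of each factor, so the main computation reduces to a single-factor calculation plus a bookkeeping sum.

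First I would compute the logarithmic derivative formally:
\[
\frac{\mathcal{D}_xP(x)}{P(x)}=\sum_{j=1}^{s}\mathcal{D}_x\log\!\left(\frac{a_jx+b_j}{c_jx+d_j}\right)
=\sum_{j=1}^{s}\left(\frac{a_j}{a_jx+b_j}-\frac{c_j}{c_jx+d_j}\right).
\]
Next I would combine the two fractions in each summand over the common denominator $(a_jx+b_j)(c_jx+d_j)$; the numerator collapses because the $x$-terms cancel:
\[
a_j(c_jx+d_j)-c_j(a_jx+b_j)=a_jd_j-b_jc_j.
\]
Multiplying through by $P(x)$ then yields exactly the asserted identity.

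The only genuine subtlety is that the complex logarithm is multivalued, so the logarithmic-derivative step above is, strictly speaking, a formal shorthand rather than a globally valid manipulation over $\mathbb{C}$. To make the argument rigorous I would instead phrase it as a statement about rational functions: the identity asserts equality of two rational functions of $x$ with coefficients depending on $\{a_j,b_j,c_j,d_j\}$, and such an equality can be verified by the product and quotient rules directly, or by a short induction on $s$. For the induction, the base case $s=1$ is precisely the quotient-rule computation $\mathcal{D}_x\frac{a_1x+b_1}{c_1x+d_1}=\frac{a_1d_1-b_1c_1}{(c_1x+d_1)^2}$, which matches the claimed formula after one rewrites $\frac{1}{(c_1x+d_1)^2}=\frac{a_1x+b_1}{c_1x+d_1}\cdot\frac{1}{(a_1x+b_1)(c_1x+d_1)}$; the inductive step follows from the product rule, peeling off the $s$-th factor and applying the induction hypothesis to the first $s-1$ factors. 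Since the computation is entirely routine, I do not anticipate any real obstacle; the only care needed is to avoid dividing by vanishing factors, which is implicitly excluded since the stated formula already presupposes that the factors $a_jx+b_j$ and $c_jx+d_j$ do not vanish.
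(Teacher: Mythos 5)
Your proposal is correct, and in its rigorous form it coincides with the paper's own proof: the paper establishes the lemma precisely by induction on $s$, with the base case $s=1$ being the quotient-rule computation you describe, and the inductive step peeling off the last factor via the product rule and absorbing the new term $\frac{a_{m+1}d_{m+1}-b_{m+1}c_{m+1}}{(a_{m+1}x+b_{m+1})(c_{m+1}x+d_{m+1})}$ into the sum. What you add is the logarithmic-derivative presentation, which the paper does not mention. Your worry about the multivalued complex logarithm is legitimate but can be dispelled without retreating to induction: for a nonvanishing differentiable factor $f_j$ the quantity $\mathcal{D}_x f_j/f_j$ is defined with no reference to $\log$ at all, and the identity $\mathcal{D}_x P/P=\sum_{j=1}^{s}\mathcal{D}_x f_j/f_j$ for $P=\prod_{j=1}^{s}f_j$ is itself just an iterated product rule, so the ``formal'' computation becomes rigorous the moment it is phrased in terms of $f'/f$ rather than $(\log f)'$ --- though of course that iteration is again a hidden induction, so the two routes differ mainly in packaging, with yours isolating the single-factor computation $a_j(c_jx+d_j)-c_j(a_jx+b_j)=a_jd_j-b_jc_j$ more transparently. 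One small refinement to your closing caveat: only the denominators $c_jx+d_j$ genuinely need to be nonzero; the factors $a_jx+b_j$ appearing in the denominators of the sum cancel against the product on the right-hand side, so after clearing them the identity extends by continuity to points where a numerator vanishes.
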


\begin{proof}
It is not difficult to verify the case $s=1$ of Lemma \ref{lemm-a}.
Suppose that
 \bnm
\mathcal{D}_x\prod_{j=1}^m\frac{a_jx+b_j}{c_jx+d_j}=\prod_{j=1}^m\frac{a_jx+b_j}{c_jx+d_j}
 \sum_{j=1}^m\frac{a_jd_j-b_jc_j}{(a_jx+b_j)(c_jx+d_j)}
 \enm
is true. We can proceed as follows:
 \bnm
&&\xqdn\mathcal{D}_x\prod_{j=1}^{m+1}\frac{a_jx+b_j}{c_jx+d_j}=
 \mathcal{D}_x\bigg\{\prod_{j=1}^{m}\frac{a_jx+b_j}{c_jx+d_j}\frac{a_{m+1}x+b_{m+1}}{c_{m+1}x+d_{m+1}}\bigg\}\\
&&\xqdn\:=\:\frac{a_{m+1}x+b_{m+1}}{c_{m+1}x+d_{m+1}}\mathcal{D}_x\prod_{j=1}^{m}\frac{a_jx+b_j}{c_jx+d_j}
+\prod_{j=1}^{m}\frac{a_jx+b_j}{c_jx+d_j}\mathcal{D}_x\frac{a_{m+1}x+b_{m+1}}{c_{m+1}x+d_{m+1}}\\
&&\xqdn\:=\:\frac{a_{m+1}x+b_{m+1}}{c_{m+1}x+d_{m+1}}\prod_{j=1}^m\frac{a_jx+b_j}{c_jx+d_j}\sum_{j=1}^m\frac{a_jd_j-b_jc_j}{(a_jx+b_j)(c_jx+d_j)}\\
&&\xqdn\:+\:\prod_{j=1}^{m}\frac{a_jx+b_j}{c_jx+d_j}\frac{a_{m+1}d_{m+1}-b_{m+1}c_{m+1}}{(c_{m+1}x+d_{m+1})^2}\\
&&\xqdn\:=\:\prod_{j=1}^{m+1}\frac{a_jx+b_j}{c_jx+d_j}
 \bigg\{\sum_{j=1}^m\frac{a_jd_j-b_jc_j}{(a_jx+b_j)(c_jx+d_j)}+\frac{a_{m+1}d_{m+1}-b_{m+1}c_{m+1}}{(a_{m+1}x+b_{m+1})(c_{m+1}x+d_{m+1})}\bigg\}\\
&&\xqdn\:=\:\prod_{j=1}^{m+1}\frac{a_jx+b_j}{c_jx+d_j}
 \sum_{j=1}^{m+1}\frac{a_jd_j-b_jc_j}{(a_jx+b_j)(c_jx+d_j)}.
 \enm
This proves Lemma \ref{lemm-a} inductively.
\end{proof}

Setting $a_j=1,b_j=r-j+1,c_j=0$ and $d_j=j$ in Lemma \ref{lemm-a},
it is easy to see that
$$\mathcal{D}_x\:\binm{x+r}{s}=\binm{x+r}{s}\big\{H_r(x)-H_{r-s}(x)\big\},$$
where $r,s\in\mathbb{N}_0$ with $s\leq r$. Besides, we have the
following relation:
$$\mathcal{D}_xH_{n}^{\langle \ell\rangle}(x)=-\ell H_{n}^{\langle
\ell+1\rangle}(x).$$

As pointed out by Richard Askey (cf. \cito{andrews}), expressing
harmonic numbers in accordance with differentiation of binomial
coefficients can be traced back to Issac Newton.
 In 2003, Paule and Schneider
\cito{paule} computed the family of series:
 \bnm
\quad
W_n(\alpha)=\sum_{k=0}^n\binm{n}{k}^{\alpha}\{1+\alpha(n-2k)H_k\}
 \enm
with $\alpha=1,2,3,4,5$ by combining this way with Zeilberger's
algorithm for definite hypergeometric sums. In terms of the
derivative operator and the hypergeometric form of Andrews'
$q$-series transformation, Krattenthaler and Rivoal
\cito{krattenthaler} deduced general Paule-Schneider type identities
with $\alpha$ being a positive integer.  More results from
differentiation of binomial coefficients can be seen in the papers
\cite{kn:sofo-a,kn:wang-b,kn:wei-a,kn:wei-c,kn:wei-d}. For different
ways and related results, the reader may refer to
\cite{kn:chen,kn:kronenburg-a,
kn:kronenburg-b,kn:schneider,kn:sofo-b,kn:wang-a}. It should be
mentioned that Sun \cite{kn:sun-a,kn:sun-b} showed recently some
congruence relations concerning harmonic numbers to us.

Inspired by the work just mentioned, we shall explore, according to
the derivative operator and Whipple-type $_3F_2$-series identities,
closed expressions for the following two families of series
involving generalized harmonic numbers:
 \bnm
 &&\qdn\xqdn\sum_{k=0}^{n}(-1)^k\binm{n}{k}\frac{\binm{n+k}{k}}{\binm{x+k}{k}}k^tH_{k}^{\langle2\rangle}(x),\\
 &&\qdn\xqdn\sum_{k=0}^{n}(-1)^k\binm{n}{k}\frac{\binm{n+k}{k}}{\binm{x+k}{k}}k^tH_{k}^2(x),
 \enm
where $t\in\mathbb{N}_0$. When $x$ is a nonnegative integer $p$,
they give closed expressions for the following two classes of series
on harmonic numbers:
 \bnm
 &&\sum_{k=0}^{n}(-1)^k\binm{n+k}{k}\binm{p+n}{n-k}k^tH_{p+k}^{\langle2\rangle},\\
 &&\sum_{k=0}^{n}(-1)^k\binm{n+k}{k}\binm{p+n}{n-k}k^tH_{p+k}^{2}.
 \enm
Due to limit of space, our explicit formulae are offered only for
$t=0,1,2$ in this paper.
\section{The first family of summation formulae involving\\ generalized harmonic numbers}
\begin{lemm}\label{lemm-b}
Let $a$, $b$ and $c$ be all complex numbers. Then
 \bnm
 _3F_2\ffnk{cccc}{1}{a,1-a,1+b}{1+c,1+2b-c}
&&\xqdn\!=\frac{1}{b}\frac{\Gamma(\frac{1+c}{2})\Gamma(\frac{2+c}{2})\Gamma(b+\frac{1-c}{2})\Gamma(b+\frac{2-c}{2})}
{\Gamma(\frac{a+c}{2})\Gamma(\frac{1-a+c}{2})\Gamma(b+\frac{1+a-c}{2})\Gamma(b+\frac{2-a-c}{2})}\\
&&\xqdn\!+\:\frac{1}{b}\frac{\Gamma(\frac{1+c}{2})\Gamma(\frac{2+c}{2})\Gamma(b+\frac{1-c}{2})\Gamma(b+\frac{2-c}{2})}
{\Gamma(\frac{1+a+c}{2})\Gamma(\frac{2-a+c}{2})\Gamma(b+\frac{a-c}{2})\Gamma(b+\frac{1-a-c}{2})}
 \enm
provided that $Re(b)>0$.
\end{lemm}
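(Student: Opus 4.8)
The plan is to reduce the single ${}_3F_2$ on the left to a combination of two \emph{genuine} Whipple series, each summable by \eqref{whipple}. The obstruction is that, with lower parameters $1+c$ and $1+2b-c$, Whipple's constraint would force the upper parameter to equal $b+\tfrac12$, whereas here it is $1+b$; the series is therefore ``off'' from Whipple's form by a half-integer and cannot be summed directly. The whole difficulty is concentrated in finding a decomposition that repairs this.

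First I would look for an elementary identity at the level of shifted factorials. Using $(1+b)_k/(b)_k=(b+k)/b$, and likewise for $(1+c)_k$ and $(1+2b-c)_k$, I claim
\[
\frac{(1+b)_k}{(1+c)_k(1+2b-c)_k}
=\frac{c}{2b}\,\frac{(b)_k}{(c)_k(1+2b-c)_k}
+\frac{2b-c}{2b}\,\frac{(b)_k}{(1+c)_k(2b-c)_k}.
\]
Multiplying through by $(1+c)_k(1+2b-c)_k/(b)_k$ collapses this to the linear-in-$k$ relation $(b+k)/b=\tfrac{c}{2b}(c+k)/c+\tfrac{2b-c}{2b}(2b-c+k)/(2b-c)$; matching the constant term and the coefficient of $k$ fixes the two weights as $\tfrac{c}{2b}$ and $\tfrac{2b-c}{2b}$, after which the identity holds for every $k$. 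The point of this particular split is that \emph{both} factorial quotients on the right are of genuine Whipple type: the first has lower parameters $c,\,1+2b-c$, and the second has lower parameters $1+c,\,2b-c$, which again satisfy Whipple's relation $1+2b-(1+c)=2b-c$.

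Next I would multiply the displayed identity by $(a)_k(1-a)_k/k!$ and sum over $k\ge0$. The left-hand side is exactly the series in the statement, while the right-hand side becomes
\[
\frac{c}{2b}\;{}_3F_2\ffnk{cccc}{1}{a,1-a,b}{c,1+2b-c}
+\frac{2b-c}{2b}\;{}_3F_2\ffnk{cccc}{1}{a,1-a,b}{1+c,2b-c}.
\]
Both series converge under $Re(b)>0$ (the excess of lower over upper parameters equals $b$ in each case), and each is evaluated by \eqref{whipple}: the first with parameters $(a,b,c)$, the second with $c$ replaced by $1+c$.

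The final step is routine Gamma-function bookkeeping via $\Gamma(1+z)=z\,\Gamma(z)$. For the first term one absorbs the prefactor using $\tfrac{c}{2}\Gamma(\tfrac{c}{2})=\Gamma(\tfrac{2+c}{2})$, which turns $\tfrac{c}{2b}$ times Whipple's value into precisely the first summand of the assertion. For the second, applying \eqref{whipple} with $c\mapsto 1+c$ yields a numerator factor $\Gamma(b-\tfrac{c}{2})$; using $(b-\tfrac{c}{2})\Gamma(b-\tfrac{c}{2})=\Gamma(b+\tfrac{2-c}{2})$ together with $\tfrac{2b-c}{2b}\cdot\tfrac{2}{2b-c}=\tfrac1b$ reproduces the second summand. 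I expect the only genuinely creative step to be discovering the two-term split above and recognizing that its second piece is again a Whipple series; everything after that is mechanical simplification.
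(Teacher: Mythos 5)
Your proposal is correct and follows essentially the same route as the paper: your termwise shifted-factorial identity is precisely the paper's contiguous relation $_3F_2\ffnk{cccc}{1}{a,1-a,1+b}{1+c,1+2b-c}=\frac{c}{2b}\,{_3F_2}\ffnk{cccc}{1}{a,1-a,b}{c,1+2b-c}+\frac{2b-c}{2b}\,{_3F_2}\ffnk{cccc}{1}{a,1-a,b}{1+c,2b-c}$ (which the paper calls routine and you verify explicitly), followed by the same two Whipple evaluations, the second via $c\to1+c$, and the same Gamma-function absorption of the prefactors.
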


\begin{proof}
Perform the replacement $c\to 1+c$ in \eqref{whipple} to get
 \bmn\label{whipple-a}
\quad
_3F_2\ffnk{cccc}{1}{a,1-a,b}{1+c,2b-c}\frac{\Gamma(\frac{1+c}{2})\Gamma(\frac{2+c}{2})\Gamma(b-\frac{c}{2})\Gamma(b-\frac{c-1}{2})}
{\Gamma(\frac{1+a+c}{2})\Gamma(\frac{2-a+c}{2})\Gamma(b+\frac{a-c}{2})\Gamma(b+\frac{1-a-c}{2})}.
 \emn
It is routine to show the continuous relation
 \bnm\quad
 _3F_2\ffnk{cccc}{1}{a,1-a,1+b}{1+c,1+2b-c}
  =\frac{c}{2b}
 {_3F_2}\ffnk{cccc}{1}{a,1-a,b}{c,1+2b-c}
+\frac{2b-c}{2b}
  {_3F_2}\ffnk{cccc}{1}{a,1-a,b}{1+c,2b-c}.
 \enm
Calculating, respectively, the two series on the right hand side by
\eqref{whipple} and \eqref{whipple-a}, we gain Lemma \ref{lemm-b}.
\end{proof}

\begin{thm} \label{thm-a}
Let $x$ be a complex number. Then
 \bnm
&&\xxqdn\sum_{k=0}^n(-1)^k\binm{n}{k}\frac{\binm{n+k}{k}}{\binm{x+k}{k}}H_{k}^{\langle2\rangle}(x)
=\frac{(-1)^n}{2}\frac{\binm{-x+n}{n}}{\binm{x+n}{n}}\bigg\{\Big[H_{n}^{\langle2\rangle}(x)-H_{n}^{\langle2\rangle}(-x)\Big]+\tfrac{4n}{x(x-n)^2}\\
&&\xxqdn\:\:+\:\Big[H_n(x)-H_{n}(-x)-H_n(\tfrac{x-n}{2})\Big]\Big[H_n(x)-H_{n}(-x)-H_n(\tfrac{x-n}{2})-\tfrac{2(x+n)}{x(x-n)}\Big]\bigg\}.
 \enm
\end{thm}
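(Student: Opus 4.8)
The plan is to read the target sum off a \emph{terminating, symmetric} Whipple-type identity and then differentiate once-and-for-all in a single auxiliary parameter. First I would set $a=-n$ and $b=x$ in Lemma~\ref{lemm-b}. The left-hand series then becomes $\sum_{k=0}^{n}\frac{(-n)_k(1+n)_k(1+x)_k}{(1+c)_k(1+2x-c)_k\,k!}$, and at $c=x$ its summand collapses to $\frac{(-n)_k(1+n)_k}{(1+x)_k\,k!}=(-1)^k\binom{n}{k}\frac{\binom{n+k}{k}}{\binom{x+k}{k}}$. Evaluating the identity at $c=x$ (equivalently, summing the resulting ${}_2F_1$ by Chu--Vandermonde) gives the harmonic-free companion sum $S(x)=(-1)^n\binom{-x+n}{n}/\binom{x+n}{n}$, which is exactly the prefactor in Theorem~\ref{thm-a}; moreover a direct check shows that at $c=x,\,b=x$ the two gamma quotients on the right of Lemma~\ref{lemm-b} coincide, each being $\tfrac12 S(x)$.

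The crucial observation is that differentiating in $c$, rather than in $x$, isolates $H_k^{\langle 2\rangle}(x)$ with \emph{no} contamination by $H_k(x)^2$. Writing the $c$-dependent part of the summand as $1/\big(\binom{c+k}{k}\binom{2x-c+k}{k}\big)$ and applying the binomial-differentiation formula recorded after Lemma~\ref{lemm-a} together with $\mathcal{D}_xH_k(x)=-H_k^{\langle 2\rangle}(x)$, the symmetry $c\leftrightarrow 2x-c$ forces $\mathcal{D}_c\log(\text{summand})=0$ at $c=x$, while $\mathcal{D}_c^{2}\log(\text{summand})=H_k^{\langle 2\rangle}(c)+H_k^{\langle 2\rangle}(2x-c)=2H_k^{\langle 2\rangle}(x)$ at $c=x$. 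Since $\mathcal{D}_c^{2}(\text{summand})=(\text{summand})\big[(\mathcal{D}_c\log)^2+\mathcal{D}_c^{2}\log\big]$, the square term drops out at $c=x$, and applying $\mathcal{D}_c^{2}$ to the left-hand side and setting $c=x$ yields precisely $2\sum_{k=0}^n(-1)^k\binom{n}{k}\frac{\binom{n+k}{k}}{\binom{x+k}{k}}H_k^{\langle 2\rangle}(x)$, i.e. twice the sum we seek. This clean isolation is exactly why the symmetric Lemma~\ref{lemm-b} is used in place of the raw identity~\eqref{whipple}.

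It remains to apply $\mathcal{D}_c^{2}$ to the right-hand side. I would differentiate each gamma quotient $G_1,G_2$ twice in $c$ and evaluate at $c=x,\,b=x$. Because $G_1=G_2=\tfrac12 S(x)$ there, and because matching the vanishing first $c$-derivative of the left-hand side forces $\partial_c\log G_1=-\partial_c\log G_2$, the result factors as $\tfrac12 S(x)$ times a bracket built from $(\partial_c\log G_i)^2$ and $\partial_c^{2}\log G_i$; the leading factor reproduces the prefactor $\tfrac{(-1)^n}{2}\binom{-x+n}{n}/\binom{x+n}{n}$. Expressing $\partial_c\log G_i$ and $\partial_c^{2}\log G_i$ through digamma and trigamma values at the half-shifted arguments $\tfrac{x\pm n}{2}+O(1)$ and converting the finite differences via $\psi(z+m)-\psi(z)=\sum_{i=0}^{m-1}\frac{1}{z+i}$ and its trigamma analogue---with the contributions of $G_1$ and $G_2$ interleaving to build full-step harmonic numbers, and a reflection bringing in the $-x$ arguments---should assemble $H_n(x),H_n(-x),H_n(\tfrac{x-n}{2})$ and $H_n^{\langle 2\rangle}(x),H_n^{\langle 2\rangle}(-x)$.

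The main obstacle is this last simplification. One must reorganize the eight polygamma contributions, complete the square to recognize the product $\big[H_n(x)-H_n(-x)-H_n(\tfrac{x-n}{2})\big]\big[H_n(x)-H_n(-x)-H_n(\tfrac{x-n}{2})-\tfrac{2(x+n)}{x(x-n)}\big]$, and pin down the purely rational remainders $\tfrac{4n}{x(x-n)^2}$ and $\tfrac{2(x+n)}{x(x-n)}$. The half-integer shifts, the interleaving of the two gamma quotients, and the telescoping of digamma differences over $n$ steps make this the delicate, bookkeeping-heavy part of the argument; by contrast, once the $c$-differentiation device is in place the left-hand side is entirely routine.
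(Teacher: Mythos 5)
Your proposal is correct and is essentially the paper's own proof: the paper likewise specializes Lemma~\ref{lemm-b} at $a=-n$, $b=x$, $c=y$ and exploits the symmetry about $y=x$ to isolate $H_k^{\langle 2\rangle}(x)$, merely organizing your single $\mathcal{D}_c^2$ at $c=x$ as one application of $\mathcal{D}_y$ (producing the factor $H_k(2x-y)-H_k(y)=2(y-x)\sum_{i=1}^k\tfrac{1}{(2x-y+i)(y+i)}$), followed by division by $2(y-x)$ and L'H\^{o}pital, which is the same second-order computation. Your observations that the first log-derivative vanishes at $c=x$, that the second gives $2H_k^{\langle 2\rangle}(x)$, and that the two gamma quotients coincide there (each equal to $\tfrac12 S(x)$) all check out, and the final polygamma bookkeeping you defer is likewise left unstated in the paper.
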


\begin{proof}
The case $a=-n$, $b=x$ and $c=y$ of Lemma \ref{lemm-b} reads as
 \bmn\label{whipple-b}
\sum_{k=0}^n(-1)^k\binm{n}{k}\frac{\binm{n+k}{k}\binm{x+k}{k}}{\binm{y+k}{k}\binm{2x-y+k}{k}}
&&\xqdn\!=\frac{2x-y}{2x}\frac{\binm{\frac{y-n-1}{2}+n}{n}\binm{y-2x+n}{n}}{\binm{\frac{y-n-1}{2}-x+n}{n}\binm{y+n}{n}}
\nnm\\
&&\xqdn\!+\:\frac{(2x-y)(y-n)}{2x(2x-y+n)}\frac{\binm{\frac{y-n}{2}+n}{n}\binm{y-2x+n}{n}}{\binm{\frac{y-n}{2}-x+n}{n}\binm{y+n}{n}}.
 \emn
 Applying the derivative operator $\mathcal{D}_y$ to both sides of
 it, we achieve
 \bnm
\quad\sum_{k=0}^n(-1)^k\binm{n}{k}\frac{\binm{n+k}{k}\binm{x+k}{k}}{\binm{y+k}{k}\binm{2x-y+k}{k}}
\big\{H_k(2x-y)-H_k(y)\big\}=\Omega_n(x,y),
 \enm
where the symbol on the right hand side stands for
 \bnm
\Omega_n(x,y)&&\xqdn\!=\frac{2x-y}{2x}\frac{\binm{\frac{y-n-1}{2}+n}{n}\binm{y-2x+n}{n}}{\binm{\frac{y-n-1}{2}-x+n}{n}\binm{y+n}{n}}\\
&&\xqdn\!\times\:\Big\{\tfrac{1}{2}H_n(\tfrac{y-n-1}{2})-\tfrac{1}{2}H_n(\tfrac{y-n-1}{2}-x)+H_{n+1}(y-2x-1)-H_n(y)\Big\}\\
&&\xqdn\!+\:\frac{(2x-y)(y-n)}{2x(2x-y+n)}\frac{\binm{\frac{y-n}{2}+n}{n}\binm{y-2x+n}{n}}{\binm{\frac{y-n}{2}-x+n}{n}\binm{y+n}{n}}\\
&&\xqdn\!\times\:\Big\{\tfrac{1}{2}H_{n+1}(\tfrac{y-n-2}{2})-\tfrac{1}{2}H_{n+1}(\tfrac{y-n-2}{2}-x)+H_{n+1}(y-2x-1)-H_n(y)\Big\}.
 \enm
 The last equation can be reformulated as
 \bnm
\qquad\sum_{k=0}^n(-1)^k\binm{n}{k}\frac{\binm{n+k}{k}\binm{x+k}{k}}{\binm{y+k}{k}\binm{2x-y+k}{k}}
\sum_{i=1}^k\frac{1}{(2x-y+i)(y+i)}=\frac{\Omega_n(x,y)}{2(y-x)}.
 \enm
Finding the limit $y\to x$ of it by using the relation
 \bnm
&&\text{Lim}_{y\to x}\frac{\Omega_n(x,y)}{2(y-x)}\\
&&\:=\:\text{Lim}_{y\to x}
\frac{\mathcal{D}_y\Omega_n(x,y)}{2}\\
&&\:=\:\frac{(-1)^n}{2}\frac{\binm{-x+n}{n}}{\binm{x+n}{n}}
\bigg\{\Big[H_{n}^{\langle2\rangle}(x)-H_{n}^{\langle2\rangle}(-x)\Big]+\tfrac{4n}{x(x-n)^2}\\
&&\:+\:\Big[H_n(x)-H_{n}(-x)-H_n(\tfrac{x-n}{2})\Big]
\Big[H_n(x)-H_{n}(-x)-H_n(\tfrac{x-n}{2})-\tfrac{2(x+n)}{x(x-n)}\Big]\bigg\}
 \enm
from L'H\^{o}spital rule, we attain Theorem \ref{thm-a}.
\end{proof}

\begin{corl}[Harmonic number identity]\label{corl-a}
\bnm
 \quad\sum_{k=0}^n(-1)^k\binm{n}{k}\binm{n+k}{k}H_{k}^{\langle2\rangle}=
 \begin{cases}
 (-1)^n\Big\{2H_{n}^{\langle2\rangle}-H_{\frac{n}{2}}^{\langle2\rangle}\Big\},&n=0\,(\qqdn\mod2);\\[2mm]
  (-1)^n\Big\{2H_{n}^{\langle2\rangle}-H_{\frac{n-1}{2}}^{\langle2\rangle}\Big\},&n=1\,(\qqdn\mod2).
\end{cases}
 \enm
\end{corl}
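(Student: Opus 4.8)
The plan is to derive Corollary \ref{corl-a} as the limiting case $x\to0$ of Theorem \ref{thm-a}. On the left-hand side this is immediate: since $\binom{x+k}{k}\to1$ and $H_{k}^{\langle2\rangle}(0)=H_{k}^{\langle2\rangle}$, the sum $\sum_{k=0}^n(-1)^k\binom{n}{k}\binom{n+k}{k}H_{k}^{\langle2\rangle}(x)/\binom{x+k}{k}$ tends to the target sum $\sum_{k=0}^n(-1)^k\binom{n}{k}\binom{n+k}{k}H_{k}^{\langle2\rangle}$. All the work lies on the right-hand side. The prefactor $\binom{-x+n}{n}/\binom{x+n}{n}=\prod_{j=1}^n\frac{j-x}{j+x}$ tends to $1$, so it remains to compute the limit of the braced factor, which I denote $\Phi(x)$. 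Because the braced factor is manifestly pole-free in the final answer and the prefactor is regular at $x=0$, the conclusion will be $\lim_{x\to0}\mathrm{RHS}=\tfrac{(-1)^n}{2}\,\Phi(0)$.

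The difficulty is that $\Phi(x)$ is \emph{not} obviously finite at $x=0$: the term $\tfrac{4n}{x(x-n)^2}$ and the factor $\tfrac{2(x+n)}{x(x-n)}$ each carry a simple pole, and—crucially—when $n$ is even the term $H_n(\tfrac{x-n}{2})=\sum_{k=1}^n\tfrac{2}{x+2k-n}$ also develops a simple pole, coming from the central index $k=n/2$, whereas for $n$ odd it is regular. This is exactly the origin of the even/odd dichotomy in the statement. So the first substantive step is to Laurent-expand each constituent of $\Phi$ about $x=0$ to order $O(1)$, treating the two parities separately, and to verify that all the $1/x$ contributions cancel. I expect this cancellation of the poles to be the main obstacle: using $H_n(x)-H_n(-x)=-2H_{n}^{\langle2\rangle}x+O(x^3)$ one checks that the residue $4/n$ of $\tfrac{4n}{x(x-n)^2}$ is exactly annihilated by the residue produced by the product $[H_n(x)-H_n(-x)-H_n(\tfrac{x-n}{2})]\,[\,\cdots\,]$, so that $\Phi$ has a finite limit $\Phi(0)$.

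Once finiteness is established, the remaining task is to read off the constant term and reorganize it into closed form. Differentiating via $\mathcal{D}_x H_n(x)=-H_n^{\langle2\rangle}(x)$ produces half-step second-order sums $\sum_{k=1}^n(2k-n)^{-2}$ (with the singular central term omitted in the even case). These are then converted to ordinary harmonic numbers through $\sum_{j=1}^{M}(2j-1)^{-2}=H_{2M}^{\langle2\rangle}-\tfrac14H_{M}^{\langle2\rangle}$ in the odd case and $\sum_{j=1}^{M}(2j)^{-2}=\tfrac14H_{M}^{\langle2\rangle}$ in the even case. Collecting terms, I anticipate $\Phi(0)=4H_{n}^{\langle2\rangle}-2H_{\lfloor n/2\rfloor}^{\langle2\rangle}$ in both parities, whence $\tfrac{(-1)^n}{2}\Phi(0)=(-1)^n\{2H_{n}^{\langle2\rangle}-H_{\lfloor n/2\rfloor}^{\langle2\rangle}\}$, which is precisely the claimed formula with $\lfloor n/2\rfloor=n/2$ for even $n$ and $(n-1)/2$ for odd $n$. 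Testing a small value (e.g. $n=3$ yields $-31/18$) provides a convenient check on the bookkeeping of the constant terms.
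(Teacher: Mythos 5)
Your proposal is correct and follows essentially the same route as the paper: both take the limit $x\to0$ in Theorem \ref{thm-a}, split by the parity of $n$ according to whether $H_n(\tfrac{x-n}{2})$ is singular at $x=0$, verify the cancellation of the simple poles, and extract the constant term via the key limit $\lim_{x\to0}\big[H_n(x)-H_{n}(-x)-H_{\frac{n}{2}}(\tfrac{x}{2})+H_{\frac{n}{2}}(-\tfrac{x}{2})\big]/x=H_{\frac{n}{2}}^{\langle2\rangle}-2H_{n}^{\langle2\rangle}$. The only cosmetic difference is that the paper organizes this limit as an application of L'H\^{o}spital's rule after an algebraic regrouping, whereas you perform the equivalent Laurent expansion about $x=0$ directly.
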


\begin{proof}
When $n=0\,(\qqdn\mod2)$, Theorem \ref{thm-a} can be manipulated as
\bnm
&&\xqdn\sum_{k=0}^n(-1)^k\binm{n}{k}\frac{\binm{n+k}{k}}{\binm{x+k}{k}}H_{k}^{\langle2\rangle}(x)
=\frac{(-1)^n}{2}\frac{\binm{-x+n}{n}}{\binm{x+n}{n}}\\
&&\xqdn\:\times\:\bigg\{\Big[H_{n}^{\langle2\rangle}(x)-H_{n}^{\langle2\rangle}(-x)\Big]
+\Big[H_n(x)-H_{n}(-x)-H_{\frac{n}{2}}(\tfrac{x}{2})+H_{\frac{n}{2}}(-\tfrac{x}{2})-\tfrac{2}{n-x}\Big]\\
&&\xqdn\:\times\:\Big[H_n(x)-H_{n}(-x)-H_{\frac{n}{2}}(\tfrac{x}{2})+H_{\frac{n}{2}}(-\tfrac{x}{2})+\tfrac{2}{n-x}\Big]+\tfrac{4}{(n-x)^2}\\
&&\xqdn\:-\:\frac{2[H_n(x)-H_{n}(-x)-H_{\frac{n}{2}}(\tfrac{x}{2})+H_{\frac{n}{2}}(-\tfrac{x}{2})]}{x}\bigg\}.
 \enm
Taking the limit $x\to0$ of it by utilizing the relation
 \bnm
&&\xxqdn\text{Lim}_{x\to0}\frac{H_n(x)-H_{n}(-x)-H_{\frac{n}{2}}(\tfrac{x}{2})+H_{\frac{n}{2}}(-\tfrac{x}{2})}{x}\\
&&\xxqdn\:\:=\:\text{Lim}_{x\to0}\mathcal{D}_x\big\{H_n(x)-H_{n}(-x)-H_{\frac{n}{2}}(\tfrac{x}{2})+H_{\frac{n}{2}}(-\tfrac{x}{2})\big\}\\
&&\xxqdn\:\:=\:H_{\frac{n}{2}}^{\langle2\rangle}-2H_{n}^{\langle2\rangle}
 \enm
 from L'H\^{o}spital
rule, we obtain
 \bmn\label{relation-a}
\sum_{k=0}^n(-1)^k\binm{n}{k}\binm{n+k}{k}H_{k}^{\langle2\rangle}
=(-1)^n\Big\{2H_{n}^{\langle2\rangle}-H_{\frac{n}{2}}^{\langle2\rangle}\Big\}.
 \emn

When $n=1\,(\qqdn\mod2)$, Theorem \ref{thm-a} can be restated as
\bnm
&&\sum_{k=0}^n(-1)^k\binm{n}{k}\frac{\binm{n+k}{k}}{\binm{x+k}{k}}H_{k}^{\langle2\rangle}(x)\\
&&\:=\:\frac{(-1)^n}{2}\frac{\binm{-x+n}{n}}{\binm{x+n}{n}}\bigg\{\Big[H_{n}^{\langle2\rangle}(x)-H_{n}^{\langle2\rangle}(-x)\Big]
+\Big[H_n(x)-H_{n}(-x)-H_n(\tfrac{x-n}{2})\Big]^2\\
&&\:+\:\frac{2}{(n-x)^2}\frac{(n^2-x^2)[H_n(x)-H_{n}(-x)-H_n(\tfrac{x-n}{2})]+2n}{x}\bigg\}.
 \enm
Finding the limit $x\to0$ of it by using the relation
 \bnm
&&\text{Lim}_{x\to0}\frac{(n^2-x^2)[H_n(x)-H_{n}(-x)-H_n(\tfrac{x-n}{2})]+2n}{x}\\
&&\:\:=\:\text{Lim}_{x\to0}\mathcal{D}_x\big\{(n^2-x^2)[H_n(x)-H_{n}(-x)-H_n(\tfrac{x-n}{2})]+2n\big\}\\
&&\:\:=\:\frac{n^2}{2}\big[H_{n}^{\langle2\rangle}(-\tfrac{n}{2})-4H_{n}^{\langle2\rangle}\big]\\
&&\:\:=\:n^2\big[2H_{n}^{\langle2\rangle}-H_{\frac{n-1}{2}}^{\langle2\rangle}-\tfrac{2}{n^2}\big]
 \enm
 from L'H\^{o}spital
rule, we get
 \bmn\label{relation-b}
\sum_{k=0}^n(-1)^k\binm{n}{k}\binm{n+k}{k}H_{k}^{\langle2\rangle}
=(-1)^n\Big\{2H_{n}^{\langle2\rangle}-H_{\frac{n-1}{2}}^{\langle2\rangle}\Big\}.
 \emn
Then \eqref{relation-a} and \eqref{relation-b} are unified to
Corollary \ref{corl-a}.
\end{proof}

\begin{corl} \label{corl-b}
Let $p$ be a positive integer satisfying $0<p\leq n$. Then
 \bnm
 &&\sum_{k=0}^n(-1)^k\binm{n+k}{k}\binm{p+n}{n-k}H_{p+k}^{\langle2\rangle}=\frac{(-1)^{n-p+1}}{p\binm{n}{p}}\\
 &&\:\times\:\begin{cases}
 H_{n+p}- H_{n-p}-H_{\frac{n+p}{2}}+H_{\frac{n-p}{2}},&n-p=0\,(\qqdn\mod2);\\[2mm]
  H_{n+p}-H_{n-p}-H_{\frac{n+p-1}{2}}+H_{\frac{n-p-1}{2}},&n-p=1\,(\qqdn\mod2).
\end{cases}
 \enm
\end{corl}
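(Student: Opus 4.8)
The plan is to derive Corollary \ref{corl-b} from Theorem \ref{thm-a} by specializing the free parameter $x$ to the positive integer $p$. First I would rewrite the summand: a direct factorial computation gives $\binom{n}{k}\binom{n+k}{k}/\binom{p+k}{k}=\binom{n+k}{k}\binom{p+n}{n-k}/\binom{p+n}{n}$, so the sum in Theorem \ref{thm-a} at $x=p$ is exactly $\binom{p+n}{n}^{-1}$ times the sum in Corollary \ref{corl-b}, once one also uses $H_{k}^{\langle2\rangle}(p)=H_{p+k}^{\langle2\rangle}-H_{p}^{\langle2\rangle}$ to pass between $H_k^{\langle2\rangle}(x)$ and $H_{p+k}^{\langle2\rangle}$.

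The constant $H_p^{\langle2\rangle}$ generated by this shift gets multiplied by the harmonic-free sum $\sum_{k=0}^n(-1)^k\binom{n}{k}\binom{n+k}{k}/\binom{p+k}{k}$. I would show this sum vanishes: it equals $\binom{p+n}{n}^{-1}\sum_{k=0}^n(-1)^k\binom{n+k}{k}\binom{p+n}{n-k}=\binom{p+n}{n}^{-1}(-1)^n\binom{p-1}{n}=0$ for $0<p\le n$, since $0\le p-1<n$. Equivalently, setting $y=x$ in \eqref{whipple-b} makes its right-hand side carry the factor $\binom{n-x}{n}$, which vanishes at $x=p$. Hence the $H_p^{\langle2\rangle}$ contribution drops out, and Corollary \ref{corl-b} reduces to evaluating $\binom{p+n}{n}$ times the limit as $x\to p$ of the right-hand side of Theorem \ref{thm-a}.

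The heart of the argument is this limit. Writing $\epsilon=x-p$, the prefactor $\binom{-x+n}{n}$ has a simple zero at $x=p$ (coming from the single factor $p-x$), namely $\binom{-x+n}{n}\sim(-1)^p\epsilon\,(p-1)!(n-p)!/n!=(-1)^p\epsilon/\big(p\binom{n}{p}\big)$, while the denominator $\binom{x+n}{n}\to\binom{p+n}{n}$ cancels the overall factor $\binom{p+n}{n}$. Thus one needs the residue of the braced expression, which is singular because $H_n^{\langle2\rangle}(-x)$ has a double pole and $H_n(-x)$ (together, in one parity, with $H_n(\tfrac{x-n}{2})$) a simple pole at $x=p$. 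The key cancellation is that the $\epsilon^{-2}$ part of $H_n^{\langle2\rangle}(x)-H_n^{\langle2\rangle}(-x)$ is exactly annihilated by the leading $\epsilon^{-2}$ singularity of the product term $[H_n(x)-H_n(-x)-H_n(\tfrac{x-n}{2})]\big[H_n(x)-H_n(-x)-H_n(\tfrac{x-n}{2})-\tfrac{2(x+n)}{x(x-n)}\big]$, leaving a simple pole whose residue survives multiplication by the simple zero and produces a finite value.

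Extracting that residue is the main obstacle, and it is where the two parity cases arise. When $n-p$ is odd, $H_n(\tfrac{x-n}{2})$ is regular at $x=p$; when $n-p$ is even it acquires its own simple pole (from the index $k=\tfrac{n-p}{2}$), changing the finite part of $H_n(x)-H_n(-x)-H_n(\tfrac{x-n}{2})$ and hence the residue. In each case I would expand every factor to first order in $\epsilon$, collect the $\epsilon^{-1}$ coefficient, and then convert the half-argument value $H_n(\tfrac{p-n}{2})=2\sum_{k=1}^n(2k-n+p)^{-1}$ into classical harmonic numbers via the standard reductions of sums of reciprocals of consecutive odd (respectively even) integers; this yields precisely $H_{n+p}-H_{n-p}-H_{(n+p-1)/2}+H_{(n-p-1)/2}$ in the odd case and $H_{n+p}-H_{n-p}-H_{(n+p)/2}+H_{(n-p)/2}$ in the even case, each against the overall factor $(-1)^{n-p+1}/\big(p\binom{n}{p}\big)$ that emerges from combining $(-1)^n$, the $(-1)^p/\big(p\binom{n}{p}\big)$ from the simple zero, and the $-2$ from the residue. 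The bookkeeping of the $\epsilon$-expansion and the parity-dependent simplification of the half-argument sum is the only delicate part; everything else is a routine application of L'H\^{o}spital's rule in the spirit of the proofs of Theorem \ref{thm-a} and Corollary \ref{corl-a}.
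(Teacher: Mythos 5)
Your proposal is correct and takes essentially the same route as the paper's own proof: specialize Theorem \ref{thm-a} at $x=p$, convert the summand via $\binom{n}{k}\binom{n+k}{k}/\binom{p+k}{k}=\binom{n+k}{k}\binom{p+n}{n-k}/\binom{p+n}{n}$ and $H_k^{\langle2\rangle}(p)=H_{p+k}^{\langle2\rangle}-H_p^{\langle2\rangle}$, kill the resulting $H_p^{\langle2\rangle}$ companion sum by \eqref{whipple-b} (it equals $\binom{p-1}{n}=0$ for $0<p\le n$; your stray factor $(-1)^n$ there is immaterial), and resolve the zero-times-pole limit with the parity split governed by $H_n(\tfrac{x-n}{2})$ --- the paper merely packages your Laurent/residue extraction as an exact algebraic regrouping of the braced expression into pieces regular at $x=p$ with the factor $x-p$ pulled out, so that only the $\tfrac{2[\cdots]}{x-p}$ term survives the limit. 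The single point to watch in executing your expansion is the boundary case $p=n$ of the even parity: there the extra simple and double poles come from $\tfrac{2(x+n)}{x(x-n)}$ and $\tfrac{4n}{x(x-n)^2}$ rather than from the index $k=\tfrac{n-p}{2}$ of $H_n(\tfrac{x-n}{2})$ as you state, though a mechanical collection of all $\epsilon^{-2}$ and $\epsilon^{-1}$ coefficients still yields the stated formula (consistent with Corollary \ref{corl-c}).
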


\begin{proof}
When $n-p=0\,(\qqdn\mod2)$, Theorem \ref{thm-a} can be written as
\bnm
&&\xqdn\sum_{k=0}^n(-1)^k\binm{n}{k}\frac{\binm{n+k}{k}}{\binm{x+k}{k}}H_{k}^{\langle2\rangle}(x)
=\frac{(-1)^{n+p}}{2}\frac{x-p}{x}\frac{\binm{-x+n}{n-p}\binm{x}{p}}{\binm{x+n}{n}\binm{n}{p}}\\
&&\xqdn\:\times\:\bigg\{\Big[H_{n}^{\langle2\rangle}(x)-H_{p-1}^{\langle2\rangle}(x-p)-H_{n-p}^{\langle2\rangle}(p-x)\Big]+\frac{4n}{x(x-n)^2}\\
&&\xqdn\:\:+\:\Big[H_n(x)+H_{p-1}(x-p)-H_{n-p}(p-x)-H_{\frac{n+p}{2}}(\tfrac{x-p}{2})+H_{\frac{n-p-2}{2}}(\tfrac{p-x}{2})\Big]\\
&&\xqdn\:\:\times\:\Big[H_n(x)+H_{p-1}(x-p)-H_{n-p}(p-x)-H_{\frac{n+p}{2}}(\tfrac{x-p}{2})+H_{\frac{n-p-2}{2}}(\tfrac{p-x}{2})-\tfrac{2(x+n)}{x(x-n)}\Big]\\
&&\xqdn\:\:-\:\frac{2[H_n(x)+H_{p-1}(x-p)-H_{n-p}(p-x)-H_{\frac{n+p}{2}}(\tfrac{x-p}{2})+H_{\frac{n-p-2}{2}}(\tfrac{p-x}{2})-\tfrac{x+n}{x(x-n)}]}{x-p}\bigg\}.
 \enm
 Taking the limit $x\to p$ of it, we gain
\bnm
&&\sum_{k=0}^n(-1)^k\binm{n+k}{k}\binm{p+n}{n-k}H_{p+k}^{\langle2\rangle}\\
&&\:=\:H_{p}^{\langle2\rangle}\sum_{k=0}^n(-1)^k\binm{n+k}{k}\binm{p+n}{n-k}\\
&&\:+\:\frac{(-1)^{n-p+1}}{p\binm{n}{p}}\Big\{H_{n+p}-
H_{n-p}-H_{\frac{n+p}{2}}+H_{\frac{n-p}{2}}\Big\}.
 \enm
Evaluating the series on the right hand side by \eqref{whipple-b},
we have
 \bmn\label{relation-c}
&&\sum_{k=0}^n(-1)^k\binm{n+k}{k}\binm{p+n}{n-k}H_{p+k}^{\langle2\rangle}
 \nnm\\
&&\:=\:\frac{(-1)^{n-p+1}}{p\binm{n}{p}}\Big\{H_{n+p}-
H_{n-p}-H_{\frac{n+p}{2}}+H_{\frac{n-p}{2}}\Big\}.
 \emn

When $n-p=1\,(\qqdn\mod2)$, Theorem \ref{thm-a} can be reformulated
as
 \bnm
&&\xqdn\sum_{k=0}^n(-1)^k\binm{n}{k}\frac{\binm{n+k}{k}}{\binm{x+k}{k}}H_{k}^{\langle2\rangle}(x)
=\frac{(-1)^{n+p}}{2}\frac{x-p}{x}\frac{\binm{-x+n}{n-p}\binm{x}{p}}{\binm{x+n}{n}\binm{n}{p}}\\
&&\xqdn\:\times\:\bigg\{\Big[H_{n}^{\langle2\rangle}(x)-H_{p-1}^{\langle2\rangle}(x-p)-H_{n-p}^{\langle2\rangle}(p-x)\Big]+\frac{4n}{x(x-n)^2}\\
&&\xqdn\:\:+\:\Big[H_n(x)+H_{p-1}(x-p)-H_{n-p}(p-x)-H_{n}(\tfrac{x-n}{2})\Big]\\
&&\xqdn\:\:\times\:\Big[H_n(x)+H_{p-1}(x-p)-H_{n-p}(p-x)-H_{n}(\tfrac{x-n}{2})-\tfrac{2(x+n)}{x(x-n)}\Big]\\
&&\xqdn\:\:+\:\frac{2[H_n(x)+H_{p-1}(x-p)-H_{n-p}(p-x)-H_{n}(\tfrac{x-n}{2})-\tfrac{x+n}{x(x-n)}]}{x-p}\bigg\}.
 \enm
Finding the limit $x\to p$ of it, we achieve
 \bnm
&&\quad\:\:\sum_{k=0}^n(-1)^k\binm{n+k}{k}\binm{p+n}{n-k}H_{p+k}^{\langle2\rangle}\\
&&\quad\:\:\:=\:H_{p}^{\langle2\rangle}\sum_{k=0}^n(-1)^k\binm{n+k}{k}\binm{p+n}{n-k}\\
&&\quad\:\:\:+\:\frac{(-1)^{n-p+1}}{p\binm{n}{p}}\Big\{H_{n+p}-
H_{n-p}-H_{\frac{n+p-1}{2}}+H_{\frac{n-p-1}{2}}\Big\}.
 \enm
Calculating the series on the right hand side by \eqref{whipple-b},
we attain
 \bmn\label{relation-d}
&&\xqdn\sum_{k=0}^n(-1)^k\binm{n+k}{k}\binm{p+n}{n-k}H_{p+k}^{\langle2\rangle}
 \nnm\\
&&\xqdn\:=\:\frac{(-1)^{n-p+1}}{p\binm{n}{p}}\Big\{H_{n+p}-
H_{n-p}-H_{\frac{n+p-1}{2}}+H_{\frac{n-p-1}{2}}\Big\}.
 \emn
Then \eqref{relation-c} and \eqref{relation-d} are unified to
Corollary \ref{corl-b}.
\end{proof}

\begin{corl}[$p=n$ in Corollary \ref{corl-b}] \label{corl-c}
 \bnm
 \xqdn\sum_{k=0}^n(-1)^k\binm{n}{k}H_{n+k}^{\langle2\rangle}=\frac{1}{n\binm{2n}{n}}\Big\{H_{n}-H_{2n}\Big\}.
 \enm
\end{corl}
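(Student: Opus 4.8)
The plan is to obtain Corollary \ref{corl-c} as the special case $p=n$ of Corollary \ref{corl-b}, precisely as the corollary's header announces. Since $n-p=0$ is even, the governing branch of Corollary \ref{corl-b} is the one for $n-p=0\ (\mathrm{mod}\,2)$. I would substitute $p=n$ into its right-hand side and simplify using $H_0=0$ and $\binm{n}{n}=1$; because $H_{n-p}=H_0=0$ and $H_{\frac{n-p}{2}}=H_0=0$, while $(-1)^{n-p+1}=-1$ and $p\binm{n}{p}=n$, the right-hand side collapses to
\[\frac{-1}{n}\big\{H_{2n}-H_{n}\big\}=\frac{1}{n}\big\{H_{n}-H_{2n}\big\}.\]

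Turning to the left-hand side, setting $p=n$ gives the sum
\[\sum_{k=0}^n(-1)^k\binm{n+k}{k}\binm{2n}{n-k}H_{n+k}^{\langle2\rangle}.\]
The key step is the binomial simplification
\[\binm{n+k}{k}\binm{2n}{n-k}=\binm{2n}{n}\binm{n}{k},\]
which I would verify by writing each binomial as a ratio of factorials: both sides equal $\tfrac{(2n)!}{k!\,(n-k)!\,n!}$. Factoring the constant $\binm{2n}{n}$ out of the summation then rewrites the left-hand side as $\binm{2n}{n}\sum_{k=0}^n(-1)^k\binm{n}{k}H_{n+k}^{\langle2\rangle}$, which is $\binm{2n}{n}$ times the sum appearing in Corollary \ref{corl-c}.

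Finally I would equate the two reformulated sides and divide through by $\binm{2n}{n}$, obtaining
\[\sum_{k=0}^n(-1)^k\binm{n}{k}H_{n+k}^{\langle2\rangle}=\frac{1}{n\binm{2n}{n}}\big\{H_{n}-H_{2n}\big\},\]
as claimed. There is no genuine obstacle here beyond bookkeeping: the only two points requiring attention are choosing the correct parity branch (recalling that $n-p=0$ is even rather than odd) and checking the factorial cancellation in the binomial identity; everything else is direct substitution into Corollary \ref{corl-b}.
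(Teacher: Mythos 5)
Your proposal is correct and matches the paper's intent exactly: the paper states Corollary \ref{corl-c} with no written proof beyond the header ``$p=n$ in Corollary \ref{corl-b}'', and your argument supplies precisely that specialization, including the factorial verification of $\binm{n+k}{k}\binm{2n}{n-k}=\binm{2n}{n}\binm{n}{k}$ and the correct even-parity branch with $H_{n-p}=H_{\frac{n-p}{2}}=H_0=0$. All simplifications check out, so there is nothing to add.
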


\begin{corl} \label{corl-d}
Let $p$ be a positive integer with $p>n$. Then
 \bnm
 \quad\sum_{k=0}^n(-1)^k\binm{n+k}{k}\binm{p+n}{n-k}H_{p+k}^{\langle2\rangle}=\frac{1}{2}\binm{p-1}{n}
 \Big\{H_{p+n}^{\langle2\rangle}+H_{p-n}^{\langle2\rangle}+A(p,n)\Big\},
 \enm
where the expression on the right hand side is
 \bnm
 A(p,n)=\begin{cases}
(H_{p+n}- H_{p-n}-H_{\frac{p+n}{2}}+H_{\frac{p-n}{2}})\\
\times(H_{p+n}- H_{p-n}-H_{\frac{p+n}{2}}+H_{\frac{p-n-2}{2}}),&p-n=0\,(\qqdn\mod2);\\[2mm]
  (H_{p+n}- H_{p-n}-H_{\frac{p+n-1}{2}}+H_{\frac{p-n-1}{2}})\\
\times(H_{p+n}-H_{p-n}-H_{\frac{p+n-1}{2}}+H_{\frac{p-n-1}{2}}+\frac{2}{p-n}),&p-n=1\,(\qqdn\mod2).
\end{cases}
 \enm
\end{corl}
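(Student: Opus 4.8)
\section*{Proof proposal for Corollary~\ref{corl-d}}

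The plan is to specialize Theorem~\ref{thm-a} at $x=p$ by direct substitution. The crucial contrast with Corollary~\ref{corl-b} is that here $p>n$, so $p\notin\{1,\dots,n\}$: the factor $\binm{-x+n}{n}$ does \emph{not} vanish at $x=p$ (indeed $\binm{-p+n}{n}=(-1)^n\binm{p-1}{n}$), and neither $H_n(-x)$ nor $H_n^{\langle2\rangle}(-x)$ has a pole there. Hence the right-hand side of Theorem~\ref{thm-a} is regular at $x=p$ and one may simply put $x=p$, with no recourse to L'H\^ospital's rule and no $0\cdot\infty$ cancellation.

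First I would transform the left-hand side of Theorem~\ref{thm-a} at $x=p$. Using $\binm{n}{k}/\binm{p+k}{k}=\binm{p+n}{n-k}/\binm{p+n}{n}$ together with $H_k^{\langle2\rangle}(p)=H_{p+k}^{\langle2\rangle}-H_p^{\langle2\rangle}$, it becomes
\[
\frac{1}{\binm{p+n}{n}}\Big\{\Sigma-H_p^{\langle2\rangle}\,S\Big\},\qquad
\Sigma:=\sum_{k=0}^n(-1)^k\binm{n+k}{k}\binm{p+n}{n-k}H_{p+k}^{\langle2\rangle},
\]
where $S:=\sum_{k=0}^n(-1)^k\binm{n+k}{k}\binm{p+n}{n-k}$ is the sum that was shown to vanish in Corollary~\ref{corl-b}. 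The next step is to evaluate $S$; setting $y=x=p$ in \eqref{whipple-b} collapses its left-hand side to $\binm{p+n}{n}^{-1}S$ and yields $S=\binm{p-1}{n}$. This is consistent with Corollary~\ref{corl-b}, since $\binm{p-1}{n}=0$ for $p\le n$, so the single formula $S=\binm{p-1}{n}$ unifies both regimes.

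Then I would simplify the right-hand side of Theorem~\ref{thm-a} at $x=p$. Here $\tfrac{(-1)^n}{2}\binm{-x+n}{n}/\binm{x+n}{n}$ evaluates to $\tfrac12\binm{p-1}{n}/\binm{p+n}{n}$, which after clearing $\binm{p+n}{n}$ produces exactly the prefactor $\tfrac12\binm{p-1}{n}$. I would rewrite each ingredient through $H_n^{\langle2\rangle}(p)=H_{p+n}^{\langle2\rangle}-H_p^{\langle2\rangle}$, $H_n^{\langle2\rangle}(-p)=H_{p-1}^{\langle2\rangle}-H_{p-n-1}^{\langle2\rangle}$, $H_n(p)=H_{p+n}-H_p$, $H_n(-p)=-(H_{p-1}-H_{p-n-1})$, and treat $H_n(\tfrac{p-n}{2})$ by a parity split: when $p-n$ is even the argument is an integer and $H_n(\tfrac{p-n}{2})=H_{\frac{p+n}{2}}-H_{\frac{p-n}{2}}$, whereas when $p-n$ is odd it is a half-integer and must be re-expressed as a combination of integer-indexed harmonic numbers, which is precisely what generates $H_{\frac{p+n-1}{2}}$ and $H_{\frac{p-n-1}{2}}$ in the odd branch of $A(p,n)$.

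Finally, solving for the target sum gives $\Sigma=\binm{p-1}{n}\big[\tfrac12\{\cdots\}+H_p^{\langle2\rangle}\big]$, and the remaining work is purely algebraic. One checks that the second-order pieces collapse via $H_p^{\langle2\rangle}-H_{p-1}^{\langle2\rangle}=1/p^{2}$ and $H_{p-n}^{\langle2\rangle}-H_{p-n-1}^{\langle2\rangle}=1/(p-n)^{2}$, leaving the advertised $H_{p+n}^{\langle2\rangle}+H_{p-n}^{\langle2\rangle}$, and that the leftover rational term $\tfrac{4n}{p(p-n)^2}$ together with the shift between the bracket $H_n(p)-H_n(-p)-H_n(\tfrac{p-n}{2})$ appearing in Theorem~\ref{thm-a} and the bracket occurring in $A(p,n)$ reassembles into the stated product. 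I expect this last reconciliation to be the main obstacle: one must track the discrepancy $\tfrac{1}{p}-\tfrac{1}{p-n}=-\tfrac{n}{p(p-n)}$ between the two first-order brackets and verify that the linear-in-bracket and constant remainders match $-\tfrac{2}{p-n}$ and $0$ respectively, while the parity bookkeeping for the half-integer harmonic number is the most delicate single computation.
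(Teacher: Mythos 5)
Your proposal is correct and takes essentially the same route as the paper: one substitutes $x=p$ directly into Theorem~\ref{thm-a} (regular since $p>n$, so no limiting argument is needed), splits by the parity of $p-n$ to rewrite $H_n(\tfrac{p-n}{2})$ through integer-indexed harmonic numbers, and cancels the $H_p^{\langle2\rangle}$ contribution by evaluating $\sum_{k=0}^n(-1)^k\binm{n+k}{k}\binm{p+n}{n-k}=\binm{p-1}{n}$ via \eqref{whipple-b}. All of your auxiliary identities, including $S=\binm{p-1}{n}$ and the transformations of $H_n(\pm p)$ and $H_n^{\langle2\rangle}(\pm p)$, check out, so the remaining algebraic reconciliation you describe is exactly the computation the paper carries out.
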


\begin{proof}
When $p-n=0\,(\qqdn\mod2)$, the case $x=p$ of Theorem \ref{thm-a}
can be manipulated as
  \bnm
&&\xxqdn\sum_{k=0}^n(-1)^k\binm{n+k}{k}\binm{p+n}{n-k}H_{p+k}^{\langle2\rangle}
=H_{p}^{\langle2\rangle}\sum_{k=0}^n(-1)^k\binm{n+k}{k}\binm{p+n}{n-k}\\
&&\xxqdn\:\:+\:\frac{1}{2}\binm{p-1}{n}
 \Big\{H_{p+n}^{\langle2\rangle}-2H_{p}^{\langle2\rangle}+H_{p-n}^{\langle2\rangle}+(H_{p+n}-
 H_{p-n}-H_{\frac{p+n}{2}}+H_{\frac{p-n}{2}})\\
&&\quad\qquad\:\:\times\:(H_{p+n}-
H_{p-n}-H_{\frac{p+n}{2}}+H_{\frac{p-n-2}{2}})\Big\}.
 \enm
Evaluating the series on the right hand side by \eqref{whipple-b},
we obtain
 \bmn\label{relation-e}
&&\xqdn\sum_{k=0}^n(-1)^k\binm{n+k}{k}\binm{p+n}{n-k}H_{p+k}^{\langle2\rangle}=\frac{1}{2}\binm{p-1}{n}
 \nnm\\\nnm
&&\xqdn\:\:\times\:
 \Big\{H_{p+n}^{\langle2\rangle}+H_{p-n}^{\langle2\rangle}+(H_{p+n}-
 H_{p-n}-H_{\frac{p+n}{2}}+H_{\frac{p-n}{2}})\\
&&\xqdn\:\:\times\:(H_{p+n}-
H_{p-n}-H_{\frac{p+n}{2}}+H_{\frac{p-n-2}{2}})\Big\}.
 \emn

When $p-n=1\,(\qqdn\mod2)$, the case $x=p$ of Theorem \ref{thm-a}
can be restated as
  \bnm
&&\xqdn\sum_{k=0}^n(-1)^k\binm{n+k}{k}\binm{p+n}{n-k}H_{p+k}^{\langle2\rangle}
=H_{p}^{\langle2\rangle}\sum_{k=0}^n(-1)^k\binm{n+k}{k}\binm{p+n}{n-k}\\
&&\xqdn\:\:+\:\frac{1}{2}\binm{p-1}{n}
 \Big\{H_{p+n}^{\langle2\rangle}-2H_{p}^{\langle2\rangle}+H_{p-n}^{\langle2\rangle}+(H_{p+n}- H_{p-n}-H_{\frac{p+n-1}{2}}+H_{\frac{p-n-1}{2}})\\
&&\qquad\qquad\:\:\times\:(H_{p+n}-H_{p-n}-H_{\frac{p+n-1}{2}}+H_{\frac{p-n-1}{2}}+\tfrac{2}{p-n})\Big\}.
 \enm
Calculating the series on the right hand side by \eqref{whipple-b},
we get
 \bmn\label{relation-f}
&&\xqdn\sum_{k=0}^n(-1)^k\binm{n+k}{k}\binm{p+n}{n-k}H_{p+k}^{\langle2\rangle}=\frac{1}{2}\binm{p-1}{n}
 \nnm\\\nnm
&&\xqdn\:\:\times\:
 \Big\{H_{p+n}^{\langle2\rangle}+H_{p-n}^{\langle2\rangle}+(H_{p+n}- H_{p-n}-H_{\frac{p+n-1}{2}}+H_{\frac{p-n-1}{2}})\\
&&\xqdn\:\:\times\:(H_{p+n}-H_{p-n}-H_{\frac{p+n-1}{2}}+H_{\frac{p-n-1}{2}}+\tfrac{2}{p-n})\Big\}.
 \emn
Then \eqref{relation-e} and \eqref{relation-f} are unified to
Corollary \ref{corl-d}.
\end{proof}

\begin{lemm}\label{lemm-c}
Let $a$, $b$ and $c$ be all complex numbers. Then
 \bnm
&&\xxqdn\sum_{k=0}^{\infty}k\frac{(a)_k(1-a)_k(1+b)_k}{k!(1+c)_k(1+2b-c)_k}\\
&&\xxqdn\:=\:\frac{a^2-a+bc-c^2}{b(1-b)}\frac{\Gamma(\frac{1+c}{2})\Gamma(\frac{2+c}{2})\Gamma(b+\frac{1-c}{2})\Gamma(b+\frac{2-c}{2})}
{\Gamma(\frac{1+a+c}{2})\Gamma(\frac{2-a+c}{2})\Gamma(b+\frac{a-c}{2})\Gamma(b+\frac{1-a-c}{2})}\\
&&\xxqdn\:\:+\:\frac{a^2-a-2b^2+3bc-c^2}{b(1-b)}\frac{\Gamma(\frac{1+c}{2})\Gamma(\frac{2+c}{2})\Gamma(b+\frac{1-c}{2})\Gamma(b+\frac{2-c}{2})}
{\Gamma(\frac{a+c}{2})\Gamma(\frac{1-a+c}{2})\Gamma(b+\frac{1+a-c}{2})\Gamma(b+\frac{2-a-c}{2})}
 \enm
provided that $Re(b-1)>0$.
\end{lemm}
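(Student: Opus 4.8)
The plan is to treat the left-hand side as the first $k$-moment $\sum_{k\ge0}k\,t_k$ of the very term
\[
t_k=\frac{(a)_k(1-a)_k(1+b)_k}{k!\,(1+c)_k(1+2b-c)_k}
\]
whose ordinary sum is supplied by Lemma \ref{lemm-b}, and to reduce this moment to series that can be summed by Whipple's formula \eqref{whipple} and its companion \eqref{whipple-a}. Throughout, the rigid pairing $\{a,1-a\}$ must be preserved: no admissible manipulation may touch $a$, so only $b$ and $c$ may be shifted. This is exactly why a parameter-differentiation argument is unavailable here — every parameter occurs in at least two places, so differentiating in any of them would produce harmonic-number weights rather than the clean factor $k$ — and the factor $k$ must instead be generated by playing the numerator parameter $1+b$ against $k!$, i.e.\ by a contiguous relation.

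First I would create the factor $k$ by the elementary identity $k\,(1+b)_k=(1+b)\big[(2+b)_k-(1+b)_k\big]$, which yields
\[
\sum_{k\ge0}k\,t_k=(1+b)\,(S_2-S_1).
\]
Here $S_1={}_3F_2\ffnk{cccc}{1}{a,1-a,1+b}{1+c,1+2b-c}$ is precisely the series evaluated in Lemma \ref{lemm-b}, while $S_2={}_3F_2\ffnk{cccc}{1}{a,1-a,2+b}{1+c,1+2b-c}$ has its numerator parameter bumped to $2+b$; a direct count of $\sum(\text{lower})-\sum(\text{upper})=b-1$ shows $S_2$ converges under the same hypothesis $Re(b-1)>0$. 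The crux is to evaluate $S_2$, whose lower parameters are still tied to $b$ although its top parameter is $2+b$. I would drive that top parameter back onto the Whipple locus by iterating the contiguous-relation technology behind Lemma \ref{lemm-b}; recall that at the level of terms that lemma is the splitting $t_k=\tfrac{c}{2b}u_k+\tfrac{2b-c}{2b}v_k$, where $u_k$ and $v_k$ are the summands of \eqref{whipple} and \eqref{whipple-a}. Applied in the appropriately shifted form, this expresses $S_2$, and hence the whole moment, as a $\mathbb{Q}(a,b,c)$-linear combination of ${}_3F_2(1)$-series each summable by \eqref{whipple} or \eqref{whipple-a}.

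Once every piece sits on the Whipple locus, I would evaluate it by \eqref{whipple} and \eqref{whipple-a}, which return precisely the two Gamma quotients displayed in the Lemma — the quotient carrying $\Gamma(\tfrac{1+a+c}{2})$ in its denominator coming from \eqref{whipple-a}, and the quotient carrying $\Gamma(\tfrac{a+c}{2})$ from \eqref{whipple}. Collecting the accumulated rational prefactors, and repeatedly using $\Gamma(z+1)=z\,\Gamma(z)$ to absorb the linear factors produced by each shift, should consolidate them into the stated coefficients $\tfrac{a^2-a+bc-c^2}{b(1-b)}$ and $\tfrac{a^2-a-2b^2+3bc-c^2}{b(1-b)}$; the appearance of $1-b$ in the denominators is the signature of the one reduction step in which the numerator parameter is shifted across $b$.

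The step I expect to be the main obstacle is producing the correct contiguous relation for the first moment: a single Lemma \ref{lemm-b}-type two-term splitting does not by itself return the moment series to the Whipple locus, so a genuine three-term contiguous relation is needed — equivalently, a Gosper–WZ telescoping certificate $k\,t_k=\rho_1 u_k+\rho_2 v_k+(G_{k+1}-G_k)$ with $G_k$ a hypergeometric multiple of $t_k$ and vanishing boundary terms, the two unequal constants $\rho_1,\rho_2$ accounting for the two distinct coefficients in the answer. After that the remaining difficulty is purely the Gamma-quotient bookkeeping, which must be arranged so that the several first-order prefactors combine exactly into the quadratic numerators $a^2-a+bc-c^2$ and $a^2-a-2b^2+3bc-c^2$; I would confirm the final identity numerically at a few generic values of $a,b,c$ as an independent check.
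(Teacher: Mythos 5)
Your opening reduction is correct: $k\,(1+b)_k=(1+b)\big[(2+b)_k-(1+b)_k\big]$ does give $\sum_{k\ge0}k\,t_k=(1+b)(S_2-S_1)$, with $S_1$ exactly the series of Lemma \ref{lemm-b} and with the convergence count for $S_2$ matching the hypothesis $Re(b-1)>0$. The genuine gap is the evaluation of $S_2={}_3F_2[a,1-a,2+b;\,1+c,1+2b-c;\,1]$, which you defer to ``iterating the contiguous-relation technology behind Lemma \ref{lemm-b}.'' That cannot work in the form you describe. The splitting underlying Lemma \ref{lemm-b} is the termwise identity
\begin{equation*}
\frac{(1+B)_k}{(C)_k(D)_k}=\frac{C-1}{2B}\,\frac{(B)_k}{(C-1)_k(D)_k}+\frac{D-1}{2B}\,\frac{(B)_k}{(C)_k(D-1)_k},
\end{equation*}
which is valid \emph{only} under the balance condition $C+D=2+2B$, i.e.\ for series sitting one unit off the Whipple locus (as $S_1$ does: denominators summing to $2+2b$ against top parameter $1+b$). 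For $S_2$ the denominators still sum to $2+2b$ but the top parameter is $2+b$, so the required condition $C+D=2+2(2+b)$ fails: $S_2$ is \emph{three} units off the locus, and no shifted instance of the two-term splitting applies to it. You do concede that ``a genuine three-term contiguous relation is needed,'' but you never produce it, nor the telescoping certificate $k\,t_k=\rho_1u_k+\rho_2v_k+(G_{k+1}-G_k)$ whose existence you assert. That missing step is not bookkeeping --- it is the entire substance of the lemma.

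For comparison, the paper generates the factor $k$ differently, by the index shift $\sum_{k\ge0}k\,t_k=\frac{a(1-a)(1+b)}{(1+c)(1+2b-c)}\,{}_3F_2[1+a,2-a,2+b;\,2+c,2+2b-c;\,1]$, whose top pair $(1+a,2-a)$ has the pattern $(A,3-A)$; the bulk of its proof is then devoted precisely to the off-locus evaluation you skip, namely the closed form \eqref{whipple-d} for ${}_3F_2[a,3-a,b;\,c,2b-c;\,1]$, built through a long chain: the extension \eqref{whipple-c} of Whipple's theorem, Kummer's transformation \eqref{kummer} giving \eqref{dixon-a}, a ${}_4F_3$ contiguous relation combined with Dixon's theorem \eqref{dixon} giving \eqref{dixon-b} and \eqref{dixon-c}, a further contiguous relation giving \eqref{dixon-d}, and Kummer again. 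An analogous multi-step derivation for $S_2$ (which is the same three-unit extension in different clothing) would complete your argument, and your difference decomposition $(1+b)(S_2-S_1)$ would then be a perfectly serviceable --- arguably tidier --- frame for it; but as written, the proposal replaces the proof's core with a conjecture checked numerically.
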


\begin{proof}
It is not difficult to verify the continuous relation
 \bnm\qquad
 _3F_2\ffnk{cccc}{1}{a,-a,b}{c,1+2b-c}=\frac{1}{2}
 {_3F_2}\ffnk{cccc}{1}{a,1-a,b}{c,1+2b-c}
 +\frac{1}{2}{_3F_2}\ffnk{cccc}{1}{1+a,-a,b}{c,1+2b-c}.
 \enm
 Evaluating the series on the right hand side by
 \eqref{whipple}, we gain
 \bmn\label{whipple-c}
 _3F_2\ffnk{cccc}{1}{a,-a,b}{c,1+2b-c}&&\xqdn\!=\frac{1}{2}
 \frac{\Gamma(\frac{c}{2})\Gamma(\frac{1+c}{2})\Gamma(b+\frac{1-c}{2})\Gamma(b+\frac{2-c}{2})}
{\Gamma(\frac{a+c}{2})\Gamma(\frac{1-a+c}{2})\Gamma(b+\frac{1+a-c}{2})\Gamma(b+\frac{2-a-c}{2})}
\nnm\\&&\xqdn\!+\:\frac{1}{2}\frac{\Gamma(\frac{c}{2})\Gamma(\frac{1+c}{2})\Gamma(b+\frac{1-c}{2})\Gamma(b+\frac{2-c}{2})}
{\Gamma(\frac{1+a+c}{2})\Gamma(\frac{c-a}{2})\Gamma(b+\frac{2+a-c}{2})\Gamma(b+\frac{1-a-c}{2})}.
 \emn
 By means of Kummer's transformation formula (cf.
\citu{andrews-r}{p. 142}):
  \bmn \label{kummer} \quad
_3F_2\ffnk{cccc}{1}{a,b,c}{d,e}
 =\frac{\Gamma(e)\Gamma(d+e-a-b-c)}{\Gamma(e-a)\Gamma(d+e-b-c)}
{_3F_2}\ffnk{cccc}{1}{a,d-b,d-c}{d,d+e-b-c},
 \emn
we have
  \bnm
  _3F_2\ffnk{cccc}{1}{a,b,c}{1+a-b,a-c}
 &&\xqdn\!=\frac{\Gamma(1+a-b)\Gamma(1+a-2b-2c)}{\Gamma(1+a-b-c)\Gamma(1+a-2b-c)}\\
 &&\xqdn\!\times\:{_3F_2}\ffnk{cccc}{1}{c,-c,a-b-c}{a-c,1+a-2b-c}.
 \enm
Calculating the series on the right hand side by \eqref{whipple-c},
we achieve
 \bmn\label{dixon-a}
 &&\xxqdn\qqdn_3F_2\ffnk{cccc}{1}{a,b,c}{1+a-b,a-c}
  \nnm\\\nnm
&&\xxqdn\qqdn\:\:=\:\frac{1}{2^{1+c}}\frac{\Gamma(1+a-b)\Gamma(\frac{1+a}{2}-b-c)\Gamma(\frac{a-c}{2})\Gamma(\frac{1+a-c}{2})}
{\Gamma(1+a-b-c)\Gamma(\frac{a}{2})\Gamma(\frac{1+a}{2}-b)\Gamma(\frac{1+a}{2}-c)}
\\
&&\xxqdn\qqdn\:\:+\:\,\frac{1}{2^{1+c}}\frac{\Gamma(1+a-b)\Gamma(\frac{2+a}{2}-b-c)\Gamma(\frac{a-c}{2})\Gamma(\frac{1+a-c}{2})}
{\Gamma(1+a-b-c)\Gamma(\frac{1+a}{2})\Gamma(\frac{2+a}{2}-b)\Gamma(\frac{a}{2}-c)}.
 \emn
It is easy to see the relation
 \bnm
 _4F_3\ffnk{cccc}{1}{a,b,c,1+x}{2+a-b,1+a-c,x}&&\xqdn\!=\frac{a(x+b-a-1)}{x(b-1)}
 {_3F_2}\ffnk{cccc}{1}{1+a,b,c}{2+a-b,1+a-c}\\
 &&\xqdn\!+\:\frac{(1+a-b)(a-x)}{x(b-1)}{_3F_2}\ffnk{cccc}{1}{a,b,c}{1+a-b,1+a-c}.
 \enm
Evaluating, respectively, the two series on the right hand side by
\eqref{dixon-a} and Dixon's $_3F_2$-series identity(cf.
\citu{andrews-r}{p. 72}):
 \bmn\label{dixon}
 &&\xxqdn\qqdn_3F_2\ffnk{cccc}{1}{a,b,c}{1+a-b,1+a-c}
  \nnm\\
&&\xxqdn\qqdn\:\:=\:\frac{\Gamma(\frac{2+a}{2})\Gamma(1+a-b)\Gamma(1+a-c)\Gamma(\frac{2+a}{2}-b-c)}
{\Gamma(1+a)\Gamma(\frac{2+a}{2}-b)\Gamma(\frac{2+a}{2}-c)\Gamma(1+a-b-c)},
 \emn
we attain
 \bnm
 &&_4F_3\ffnk{cccc}{1}{a,b,c,1+x}{2+a-b,1+a-c,x}\\
&&\:=\:\frac{a(1+a-b-2c)-x(2+a-2b-2c)}{2x(b-1)}\\
&&\:\:\times\:\frac{\Gamma(\frac{2+a}{2})\Gamma(2+a-b)\Gamma(1+a-c)\Gamma(\frac{2+a}{2}-b-c)}
{\Gamma(1+a)\Gamma(\frac{2+a}{2}-b)\Gamma(\frac{2+a}{2}-c)\Gamma(2+a-b-c)}\\
&&\:\:+\:
\frac{a(x+b-a-1)}{2x(b-1)}\frac{\Gamma(\frac{1+a}{2})\Gamma(2+a-b)\Gamma(1+a-c)\Gamma(\frac{3+a}{2}-b-c)}
{\Gamma(1+a)\Gamma(\frac{3+a}{2}-b)\Gamma(\frac{1+a}{2}-c)\Gamma(2+a-b-c)}.
 \enm
When the parameter $x$ is specified, the last equation can produce
the following two results:
 \bmn
 &&_3F_2\ffnk{cccc}{1}{a,b,c}{2+a-b,1+a-c}
  \nnm\\\nnm
&&\:=\:\frac{1}{1-b}\frac{\Gamma(\frac{2+a}{2})\Gamma(2+a-b)\Gamma(1+a-c)\Gamma(\frac{4+a}{2}-b-c)}
{\Gamma(1+a)\Gamma(\frac{2+a}{2}-b)\Gamma(\frac{2+a}{2}-c)\Gamma(2+a-b-c)}
     \\ \label{dixon-b}
&&\:\:-\:\frac{1}{2(1-b)}
\frac{\Gamma(\frac{1+a}{2})\Gamma(2+a-b)\Gamma(1+a-c)\Gamma(\frac{3+a}{2}-b-c)}
{\Gamma(a)\Gamma(\frac{3+a}{2}-b)\Gamma(\frac{1+a}{2}-c)\Gamma(2+a-b-c)},\\
 &&_3F_2\ffnk{cccc}{1}{a,b,c}{2+a-b,a-c}
  \nnm\\\nnm
&&\:=\:\frac{1-b-c}{1-b}\frac{\Gamma(\frac{2+a}{2})\Gamma(2+a-b)\Gamma(a-c)\Gamma(\frac{2+a}{2}-b-c)}
{\Gamma(1+a)\Gamma(\frac{2+a}{2}-b)\Gamma(\frac{a}{2}-c)\Gamma(2+a-b-c)}
\\ \label{dixon-c}
 &&\:\:+\:\frac{1-b+c}{2(1-b)}
\frac{\Gamma(\frac{1+a}{2})\Gamma(2+a-b)\Gamma(a-c)\Gamma(\frac{3+a}{2}-b-c)}
{\Gamma(a)\Gamma(\frac{3+a}{2}-b)\Gamma(\frac{1+a}{2}-c)\Gamma(2+a-b-c)}.
 \emn
It is routine to show the continuous relation
 \bnm
 _3F_2\ffnk{cccc}{1}{a,b,c}{3+a-b,a-c}&&\xqdn=\:\frac{(2+a-b)c}{(a-c)(b-2)}
 {_3F_2}\ffnk{cccc}{1}{a,b,c}{2+a-b,1+a-c}\\
&&\xqdn
+\:\:\frac{a(b-c-2)}{(a-c)(b-2)}{_3F_2}\ffnk{cccc}{1}{1+a,b,c}{3+a-b,1+a-c}.
 \enm
 Calculating, respectively, the two series on the right hand side by
 \eqref{dixon-b} and \eqref{dixon-c}, we obtain
\bmn\label{dixon-d}
 &&\xxqdn_3F_2\ffnk{cccc}{1}{a,b,c}{3+a-b,a-c}
  \nnm\\\nnm
&&\xxqdn\:=\:\frac{2+b^2-3b+bc-ac-c}{(b-1)(b-2)}
\frac{\Gamma(\frac{2+a}{2})\Gamma(3+a-b)\Gamma(a-c)\Gamma(\frac{4+a}{2}-b-c)}
{\Gamma(1+a)\Gamma(\frac{4+a}{2}-b)\Gamma(\frac{a}{2}-c)\Gamma(3+a-b-c)}\\
&&\xxqdn\:\:+\:\frac{2+b^2-3b-bc+ac+c-2c^2}{2(b-1)(b-2)}\frac{\Gamma(\frac{1+a}{2})\Gamma(3+a-b)\Gamma(a-c)\Gamma(\frac{3+a}{2}-b-c)}
{\Gamma(a)\Gamma(\frac{3+a}{2}-b)\Gamma(\frac{1+a}{2}-c)\Gamma(3+a-b-c)}.
 \emn
In accordance with \eqref{kummer}, we get
 \bnm
 _3F_2\ffnk{cccc}{1}{a,3-a,b}{c,2b-c}=\frac{\Gamma(2b-c)\Gamma(b-3)}{\Gamma(2b-a-c)\Gamma(a+b-3)}
 {_3F_2}\ffnk{cccc}{1}{a,c-b,a+c-3}{c,a+b-3}.
 \enm
Evaluating the series on the right hand side by \eqref{dixon-d}, we
gain
 \bmn\label{whipple-d}
_3F_2\ffnk{cccc}{1}{a,3-a,b}{c,2b-c}&&\xqdn=\:\frac{4(2-3a+a^2-2b+bc+2c-c^2)}{(a-1)(a-2)(b-1)(b-2)(b-3)}
\nnm\\
&&\xqdn\times\:\:\frac{\Gamma(\frac{c}{2})\Gamma(\frac{1+c}{2})\Gamma(b-\frac{c}{2})\Gamma(b-\frac{c-1}{2})}
{\Gamma(\frac{a+c-2}{2})\Gamma(\frac{1-a+c}{2})\Gamma(b-\frac{a+c}{2})\Gamma(b-\frac{3-a+c}{2})}
\nnm\\\nnm
&&\xqdn\:+\:\:\frac{4(2-3a+a^2+2b-2b^2+3bc-2c-c^2)}{(a-1)(a-2)(b-1)(b-2)(b-3)}\\
&&\xqdn\times\:\:\frac{\Gamma(\frac{c}{2})\Gamma(\frac{1+c}{2})\Gamma(b-\frac{c}{2})\Gamma(b-\frac{c-1}{2})}
{\Gamma(\frac{a+c-3}{2})\Gamma(\frac{c-a}{2})\Gamma(b-\frac{a+c-1}{2})\Gamma(b-\frac{2-a+c}{2})}.
 \emn
It is not difficult to verify that
 \bnm
&&\sum_{k=0}^{\infty}k\frac{(a)_k(1-a)_k(1+b)_k}{k!(1+c)_k(1+2b-c)_k}\\
&&\:=\:\sum_{k=1}^{\infty}\frac{(a)_k(1-a)_k(1+b)_k}{(k-1)!(1+c)_k(1+2b-c)_k}\\
&&\:=\:\sum_{k=0}^{\infty}\frac{(a)_{k+1}(1-a)_{k+1}(1+b)_{k+1}}{k!(1+c)_{k+1}(1+2b-c)_{k+1}}\\
&&\:=\:\frac{a(1-a)(1+b)}{(1+c)(1+2b-c)}{_3F_2}\ffnk{cccc}{1}{1+a,2-a,2+b}{2+c,2+2b-c}.
 \enm
Calculating the series on the right hand side by \eqref{whipple-d},
we achieve Lemma \ref{lemm-c}.
 \end{proof}

\begin{thm} \label{thm-b}
Let $x$ be a complex number. Then
 \bnm
&&\xxqdn\sum_{k=0}^n(-1)^k\binm{n}{k}\frac{\binm{n+k}{k}}{\binm{x+k}{k}}kH_{k}^{\langle2\rangle}(x)
=(-1)^n\frac{n(n+1)}{2(1-x)}\frac{\binm{-x+n}{n}}{\binm{x+n}{n}}\\
&&\xxqdn\:\:\times\:\bigg\{H_{n}^{\langle2\rangle}(x)-H_{n}^{\langle2\rangle}(-x)
+\Big[H_n(x)-H_{n}(-x)\Big]\\
&&\xxqdn\:\:\times\:\Big[H_n(x)-H_{n}(-x)-2H_{n+1}(\tfrac{x-n-2}{2})-\tfrac{2(x^2-n-n^2)}{xn(n+1)}\Big]\\
&&\xxqdn\:\:+\:\,H_{n}(\tfrac{x-n}{2})\Big[H_{n+1}(\tfrac{x-n-2}{2})+\tfrac{2(x^3-nx^2+n^2+n^3)}{x(x-n)n(n+1)}\Big]
+\tfrac{4(x^2-nx+n+n^2)}{x(x-n)^2(n+1)}\bigg\}.
 \enm
\end{thm}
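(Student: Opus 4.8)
The plan is to follow the template of the proof of Theorem \ref{thm-a}, but to use Lemma \ref{lemm-c} in place of Lemma \ref{lemm-b}, since the left-hand side of Lemma \ref{lemm-c} already carries the extra factor $k$ needed to generate the weight $kH_{k}^{\langle2\rangle}(x)$. First I would specialize Lemma \ref{lemm-c} to $a=-n$, $b=x$, $c=y$. Because $(-n)_k$ terminates the series at $k=n$, and each Pochhammer symbol converts to a binomial coefficient through $(1+z)_k=k!\binm{z+k}{k}$ together with $(-n)_k=(-1)^kk!\binm{n}{k}$, the left-hand side collapses to
\[
\sum_{k=0}^n(-1)^k k\binm{n}{k}\frac{\binm{n+k}{k}\binm{x+k}{k}}{\binm{y+k}{k}\binm{2x-y+k}{k}},
\]
while the right-hand side becomes an explicit combination of two binomial-coefficient ratios multiplied by the rational prefactors of Lemma \ref{lemm-c}; denote this closed form by $R(y)$. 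Since the displayed sum is manifestly invariant under the exchange $y\leftrightarrow2x-y$, we have $R(y)=R(2x-y)$, and hence $\mathcal{D}_yR(x)=0$.

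Next I would apply the derivative operator $\mathcal{D}_y$ to both sides. Only the two $y$-dependent binomials $\binm{y+k}{k}$ and $\binm{2x-y+k}{k}$ are affected, so by Lemma \ref{lemm-a} each summand on the left picks up the factor $H_k(2x-y)-H_k(y)$, exactly as in the proof of Theorem \ref{thm-a}. Rewriting this factor via the elementary identity $H_k(2x-y)-H_k(y)=2(y-x)\sum_{i=1}^k\frac{1}{(2x-y+i)(y+i)}$ and dividing through by $2(y-x)$, I would arrive at
\bnm
&&\sum_{k=0}^n(-1)^k k\binm{n}{k}\frac{\binm{n+k}{k}\binm{x+k}{k}}{\binm{y+k}{k}\binm{2x-y+k}{k}}\sum_{i=1}^k\frac{1}{(2x-y+i)(y+i)}\\
&&\:=\:\frac{\mathcal{D}_yR(y)}{2(y-x)}.
\enm

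Finally I would let $y\to x$. On the left the inner sum tends to $\sum_{i=1}^k(x+i)^{-2}=H_{k}^{\langle2\rangle}(x)$ and the binomials collapse to $\binm{n+k}{k}/\binm{x+k}{k}$, producing exactly the series in Theorem \ref{thm-b}. On the right, the vanishing $\mathcal{D}_yR(x)=0$ makes the quotient an indeterminate $0/0$ form, and one application of L'H\^{o}spital's rule replaces it by $\tfrac12\mathcal{D}_y^2R(x)$. The theorem is therefore equivalent to the evaluation of the second $y$-derivative of $R(y)$ at $y=x$.

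I expect this last evaluation to be the main obstacle. Once $R(y)$ is rewritten in binomial-coefficient form (as in passing from Lemma \ref{lemm-b} to \eqref{whipple-b}), it is considerably heavier than the closed form of \eqref{whipple-b}: each of its two terms carries a rational prefactor, coming from $a^2-a+bc-c^2$ and $a^2-a-2b^2+3bc-c^2$ over $b(1-b)$, multiplying a product of four binomial coefficients. Differentiating twice forces the product rule across both the rational prefactor and the binomial product; by Lemma \ref{lemm-a} the first differentiation produces first-order harmonic numbers such as $H_n(\tfrac{x-n}{2})$ and $H_{n+1}(\tfrac{x-n-2}{2})$, and a second differentiation introduces the second-order difference $H_{n}^{\langle2\rangle}(x)-H_{n}^{\langle2\rangle}(-x)$ via $\mathcal{D}_yH_n(y)=-H_{n}^{\langle2\rangle}(y)$. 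Collecting the numerous resulting cross terms—together with purely rational remainders such as $\tfrac{4(x^2-nx+n+n^2)}{x(x-n)^2(n+1)}$—into the compact bracketed expression of Theorem \ref{thm-b} is the lengthy but essentially routine computational core, and it is the only place where sustained care is required.
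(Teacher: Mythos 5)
Your proposal is correct and follows essentially the same route as the paper: specialize Lemma \ref{lemm-c} at $a=-n$, $b=x$, $c=y$ to get \eqref{whipple-e}, apply $\mathcal{D}_y$ so each summand acquires $H_k(2x-y)-H_k(y)=2(y-x)\sum_{i=1}^k\frac{1}{(2x-y+i)(y+i)}$, divide by $2(y-x)$, and evaluate the limit $y\to x$ by L'H\^{o}spital, reducing the theorem to $\tfrac{1}{2}\mathcal{D}_y^2R(x)$ (the paper writes this as $\lim_{y\to x}\mathcal{D}_y\Phi_n(x,y)/2$ with $\Phi_n=\mathcal{D}_yR$). Your symmetry observation $R(y)=R(2x-y)$, which forces $\mathcal{D}_yR(x)=0$ and justifies the $0/0$ form, is a small but genuine improvement on the paper, which applies L'H\^{o}spital without explicitly checking that the numerator vanishes at $y=x$.
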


\begin{proof}
The case $a=-n$, $b=x$ and $c=y$ of Lemma \ref{lemm-c} reads as
 \bmn\label{whipple-e}
&&\xxqdn\xqdn\sum_{k=0}^n(-1)^kk\binm{n}{k}\frac{\binm{n+k}{k}\binm{x+k}{k}}{\binm{y+k}{k}\binm{2x-y+k}{k}}
 \nnm\\\nnm
&&\xxqdn\xqdn\:=\:\frac{(2x-y)(n^2+n+xy-y^2)}{2x(1-x)}\frac{\binm{\frac{y-n-1}{2}+n}{n}\binm{y-2x+n}{n}}{\binm{\frac{y-n-1}{2}-x+n}{n}\binm{y+n}{n}}\\
&&\xxqdn\xqdn\:\,+\:\:\frac{(2x-y)(y-n)(n^2+n-2x^2+3xy-y^2)}{2x(1-x)(2x-y+n)}\frac{\binm{\frac{y-n}{2}+n}{n}\binm{y-2x+n}{n}}{\binm{\frac{y-n}{2}-x+n}{n}\binm{y+n}{n}}.
 \emn
 Applying the derivative operator $\mathcal{D}_y$ to both sides of
 it, we attain
 \bnm
\sum_{k=0}^n(-1)^k\binm{n}{k}\frac{\binm{n+k}{k}\binm{x+k}{k}}{\binm{y+k}{k}\binm{2x-y+k}{k}}
k\big\{H_k(2x-y)-H_k(y)\big\}=\Phi_n(x,y),
 \enm
where the symbol on the right hand side stands for
 \bnm
\Phi_n(x,y)&&\xqdn\!=\frac{(2x-y)(n^2+n+xy-y^2)}{2x(1-x)}
\frac{\binm{\frac{y-n-1}{2}+n}{n}\binm{y-2x+n}{n}}{\binm{\frac{y-n-1}{2}-x+n}{n}\binm{y+n}{n}}\Big\{\tfrac{1}{2}H_n(\tfrac{y-n-1}{2})\\
&&\xqdn\!-\:\tfrac{1}{2}H_n(\tfrac{y-n-1}{2}-x)+H_{n+1}(y-2x-1)-H_n(y)+\tfrac{x-2y}{n^2+n+xy-y^2}\Big\}\\
&&\xqdn\!+\:\frac{(2x-y)(y-n)(n^2+n-2x^2+3xy-y^2)}{2x(1-x)(2x-y+n)}
\frac{\binm{\frac{y-n}{2}+n}{n}\binm{y-2x+n}{n}}{\binm{\frac{y-n}{2}-x+n}{n}\binm{y+n}{n}}\Big\{\tfrac{1}{2}H_{n+1}(\tfrac{y-n-2}{2})\\
&&\xqdn\!-\:\tfrac{1}{2}H_{n+1}(\tfrac{y-n-2}{2}-x)+H_{n+1}(y-2x-1)-H_n(y)
+\tfrac{3x-2y}{n^2+n-2x^2+3xy-y^2}\Big\}.
 \enm
 The last equation can be written as
 \bnm
\sum_{k=0}^n(-1)^k\binm{n}{k}\frac{\binm{n+k}{k}\binm{x+k}{k}}{\binm{y+k}{k}\binm{2x-y+k}{k}}
k\sum_{i=1}^k\frac{1}{(2x-y+i)(y+i)}=\frac{\Phi_n(x,y)}{2(y-x)}.
 \enm
Taking the limit $y\to x$ of it by utilizing the relation
 \bnm
&&\text{Lim}_{y\to x}\frac{\Phi_n(x,y)}{2(y-x)}\\
&&\:=\:\text{Lim}_{y\to x}
\frac{\mathcal{D}_y\Phi_n(x,y)}{2}\\
&&\:=\:(-1)^n\frac{n(n+1)}{2(1-x)}\frac{\binm{-x+n}{n}}{\binm{x+n}{n}}\\
&&\:\:\times\:\bigg\{H_{n}^{\langle2\rangle}(x)-H_{n}^{\langle2\rangle}(-x)
+\Big[H_n(x)-H_{n}(-x)\Big]\\
&&\:\:\times\:\Big[H_n(x)-H_{n}(-x)-2H_{n+1}(\tfrac{x-n-2}{2})-\tfrac{2(x^2-n-n^2)}{xn(n+1)}\Big]\\
&&\:\:+\:\,H_{n}(\tfrac{x-n}{2})\Big[H_{n+1}(\tfrac{x-n-2}{2})+\tfrac{2(x^3-nx^2+n^2+n^3)}{x(x-n)n(n+1)}\Big]
+\tfrac{4(x^2-nx+n+n^2)}{x(x-n)^2(n+1)}\bigg\}
 \enm
from L'H\^{o}spital rule, we obtain Theorem \ref{thm-b}.
\end{proof}

 When $x\to p$, where $p$ a nonnegative integer, Theorem \ref{thm-b}
 can give the following four corollaries.

\begin{corl}[Harmonic number identity] \label{corl-e}
\bnm
 &&\xxqdn\sum_{k=0}^n(-1)^k\binm{n}{k}\binm{n+k}{k}kH_{k}^{\langle2\rangle}=(-1)^nn(n+1)\\
 &&\xxqdn\:\times\:\begin{cases}
 2H_{n}^{\langle2\rangle}-H_{\frac{n}{2}}^{\langle2\rangle},&n=0\,(\qqdn\mod2);\\[2mm]
 2H_{n}^{\langle2\rangle}-H_{\frac{n-1}{2}}^{\langle2\rangle}-\frac{2}{n(n+1)},&n=1\,(\qqdn\mod2).
\end{cases}
 \enm
\end{corl}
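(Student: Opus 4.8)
The plan is to obtain Corollary \ref{corl-e} by letting $x\to0$ in Theorem \ref{thm-b}, in exact parallel with the passage from Theorem \ref{thm-a} to Corollary \ref{corl-a}. As $x\to0$ the left-hand side of Theorem \ref{thm-b} becomes the target sum, since $\binm{x+k}{k}\to1$ and $H_{k}^{\langle2\rangle}(x)\to H_{k}^{\langle2\rangle}$, while the prefactor $(-1)^n\frac{n(n+1)}{2(1-x)}\frac{\binm{-x+n}{n}}{\binm{x+n}{n}}$ tends to the finite value $(-1)^n\frac{n(n+1)}{2}$. Everything therefore reduces to evaluating the limit of the braced expression, which must be a finite number even though several of its ingredients blow up as $x\to0$.

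I would split the computation according to the parity of $n$, because the harmonic numbers $H_n(\tfrac{x-n}{2})$ and $H_{n+1}(\tfrac{x-n-2}{2})$ develop poles at $x=0$ exactly when $n$ is even. Indeed $H_n(\tfrac{x-n}{2})=\sum_{k=1}^{n}\tfrac{2}{x+2k-n}$ acquires the singular summand $\tfrac{2}{x}$ from $k=n/2$, and likewise $H_{n+1}(\tfrac{x-n-2}{2})$ from $k=\tfrac{n}{2}+1$, whereas for odd $n$ no index makes the relevant linear form vanish and both stay regular. This is the same dichotomy that forces the two branches in Corollary \ref{corl-a}, and it will again be the origin of the two cases here; in the even case I would expose the poles by rewriting these harmonic numbers through half-order combinations such as $H_{n/2}(\tfrac{x}{2})-H_{n/2}(-\tfrac{x}{2})$.

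The heart of the proof is the cancellation of poles inside the brace. First I would resolve the rational fractions into partial fractions, namely $\tfrac{2(x^3-nx^2+n^2+n^3)}{x(x-n)n(n+1)}=\tfrac{2x}{n(n+1)}-\tfrac{2}{x}+\tfrac{2}{x-n}$ and $\tfrac{2(x^2-n-n^2)}{xn(n+1)}=\tfrac{2x}{n(n+1)}-\tfrac{2}{x}$, so that each isolated $\tfrac{2}{x}$ is laid bare. For odd $n$ the two harmonic numbers are regular, so the only poles are rational, and I expect the simple pole of the third summand $H_n(\tfrac{x-n}{2})[\cdots]$ to cancel against that of the fourth summand $\tfrac{4(x^2-nx+n+n^2)}{x(x-n)^2(n+1)}$, with residues $-\tfrac{4}{n}$ and $+\tfrac{4}{n}$, while the first summand vanishes and the second is regular since its singular factor multiplies $H_n(x)-H_{n}(-x)=O(x)$. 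For even $n$ the picture is subtler: within the third summand the pole of $H_{n+1}(\tfrac{x-n-2}{2})$ first cancels the $-\tfrac{2}{x}$ of its rational partner, after which the residual simple poles, now fed also by the $\tfrac{2}{x}$ in $H_n(\tfrac{x-n}{2})$, must be checked to cancel across the four summands. In either case, once no pole survives, the remaining $0/0$ indeterminacy is handled by L'H\^{o}spital's rule together with $\mathcal{D}_xH_{n}^{\langle\ell\rangle}(x)=-\ell H_{n}^{\langle\ell+1\rangle}(x)$ and the derivative evaluations already recorded in Corollary \ref{corl-a}, in particular $\text{Lim}_{x\to0}\tfrac{1}{x}\{H_n(x)-H_{n}(-x)-H_{n/2}(\tfrac{x}{2})+H_{n/2}(-\tfrac{x}{2})\}=H_{n/2}^{\langle2\rangle}-2H_{n}^{\langle2\rangle}$ and the odd-case identity $H_{n}^{\langle2\rangle}(-\tfrac{n}{2})=8H_{n}^{\langle2\rangle}-2H_{(n-1)/2}^{\langle2\rangle}-\tfrac{4}{n^2}$ implicit there.

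The main obstacle I anticipate is precisely this pole bookkeeping: simple poles of three different origins must conspire to cancel, and the constant terms surviving them must be tracked exactly through the chain of L'H\^{o}spital differentiations. A residual constant left after the pole cancellation, traceable to the parity-sensitive value $H_n(-\tfrac{n}{2})=\tfrac{2}{n}$ taken at the origin in the odd case, should account for the extra additive term $-\tfrac{2}{n(n+1)}$ present only in the odd branch; holding this constant correct while it is masked inside the differentiations is the delicate point. After assembling the finite limit and multiplying by $(-1)^n\frac{n(n+1)}{2}$, the even and odd cases should collapse to $(-1)^nn(n+1)\{2H_{n}^{\langle2\rangle}-H_{n/2}^{\langle2\rangle}\}$ and $(-1)^nn(n+1)\{2H_{n}^{\langle2\rangle}-H_{(n-1)/2}^{\langle2\rangle}-\tfrac{2}{n(n+1)}\}$ respectively, which is the claim.
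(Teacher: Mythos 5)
Your proposal is correct and takes essentially the same route the paper intends: the paper offers no separate proof of Corollary \ref{corl-e}, simply presenting it as the $x\to p$ instance of Theorem \ref{thm-b} with $p=0$, and your parity split, partial-fraction exposure of the $\tfrac{2}{x}$ poles, residue cancellation ($-\tfrac{4}{n}$ against $+\tfrac{4}{n}$), and final L'H\^{o}spital evaluations mirror exactly the paper's worked proof of Corollary \ref{corl-a} from Theorem \ref{thm-a}. The specific ingredients you invoke all check out, including $H_n(-\tfrac{n}{2})=\tfrac{2}{n}$ and $H_{n}^{\langle2\rangle}(-\tfrac{n}{2})=8H_{n}^{\langle2\rangle}-2H_{\frac{n-1}{2}}^{\langle2\rangle}-\tfrac{4}{n^2}$ for odd $n$, and they reproduce both branches of the stated identity.
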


\begin{corl} \label{corl-f}
Let $p$ be a positive integer satisfying $0<p\leq n$. Then
 \bnm
 &&\sum_{k=0}^n(-1)^k\binm{n+k}{k}\binm{p+n}{n-k}kH_{p+k}^{\langle2\rangle}=\frac{n(n+1)}{p(p-1)}\frac{(-1)^{n-p}}{\binm{n}{p}}\\
 &&\:\times\:\begin{cases}
 H_{n+p}- H_{n-p}-H_{\frac{n+p}{2}}+H_{\frac{n-p}{2}}-\frac{p}{n(n+1)},&n-p=0\,(\qqdn\mod2);\\[2mm]
  H_{n+p}-H_{n-p}-H_{\frac{n+p-1}{2}}+H_{\frac{n-p-1}{2}}+\frac{p}{n(n+1)},&n-p=1\,(\qqdn\mod2).
\end{cases}
 \enm
\end{corl}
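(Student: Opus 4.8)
The plan is to specialize Theorem \ref{thm-b} at $x\to p$, following verbatim the route by which Corollary \ref{corl-b} was deduced from Theorem \ref{thm-a}. First I would record the two elementary facts that hold at $x=p$: namely $H_{k}^{\langle2\rangle}(p)=H_{p+k}^{\langle2\rangle}-H_{p}^{\langle2\rangle}$, together with
\[
\binom{n}{k}\frac{\binom{n+k}{k}}{\binom{p+k}{k}}=\frac{1}{\binom{p+n}{n}}\binom{n+k}{k}\binom{p+n}{n-k}.
\]
Using these, the left-hand side of Theorem \ref{thm-b} at $x=p$ turns into
\[
\frac{1}{\binom{p+n}{n}}\Bigg\{\sum_{k=0}^n(-1)^k\binom{n+k}{k}\binom{p+n}{n-k}kH_{p+k}^{\langle2\rangle}-H_{p}^{\langle2\rangle}\sum_{k=0}^n(-1)^k\binom{n+k}{k}\binom{p+n}{n-k}k\Bigg\},
\]
so the target sum is recovered once the limiting value of the right-hand side is known and the auxiliary $k$-weighted sum is evaluated.

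The right-hand side of Theorem \ref{thm-b} cannot be reached by plain substitution, because its prefactor $\binom{-x+n}{n}$ has a simple zero at $x=p$ whereas the braces carry poles from $H_{n}^{\langle2\rangle}(-x)$ and $H_{n}(-x)$. I would make the zero explicit through the factorization
\[
\binom{-x+n}{n}=(-1)^p\frac{x-p}{x}\frac{\binom{-x+n}{n-p}\binom{x}{p}}{\binom{n}{p}},
\]
and isolate the singular $i=p$ contributions by
\[
H_{n}^{\langle2\rangle}(-x)=H_{p-1}^{\langle2\rangle}(x-p)+\tfrac{1}{(x-p)^2}+H_{n-p}^{\langle2\rangle}(p-x),\qquad H_{n}(-x)=-H_{p-1}(x-p)-\tfrac{1}{x-p}+H_{n-p}(p-x).
\]
After these replacements the explicit $(x-p)$ in front cancels the simple and double poles of the braces, so the limit $x\to p$ is finite and can be read off just as in Corollaries \ref{corl-b} and \ref{corl-d}. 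The half-argument terms $H_{n}(\tfrac{x-n}{2})$ and $H_{n+1}(\tfrac{x-n-2}{2})$ are what force the split into the residue classes $n-p\equiv0$ and $n-p\equiv1\pmod2$, thereby producing the two branches of the piecewise answer.

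To finish, I would dispose of the $H_{p}^{\langle2\rangle}$-term by evaluating the auxiliary $k$-weighted sum via \eqref{whipple-e}. Setting $y=x=p$ there gives the sum a factor $\binom{-x+n}{n}=\binom{n-p}{n}=0$; since the accompanying $\tfrac{1}{1-x}$ stays finite for $p\geq2$ — the denominator $p(p-1)$ in the statement is precisely the signal that $p=1$ is the degenerate case — the entire auxiliary sum vanishes and the $H_{p}^{\langle2\rangle}$-contribution drops out. Collecting the surviving finite part and clearing the factor $\binom{p+n}{n}$ then yields Corollary \ref{corl-f}.

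The main obstacle is the limit computation of the second paragraph: extracting the finite part of a vanishing prefactor multiplied into braces that carry both simple and double poles in $(x-p)$, while simultaneously tracking the parity-dependent collapse of the half-argument generalized harmonic numbers. This is the technical heart of the argument and exactly where the separate $n-p$ even and odd cases are born; everything else is the routine bookkeeping already displayed in the proofs of Corollaries \ref{corl-b}--\ref{corl-d}.
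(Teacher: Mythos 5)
Your proposal is correct and follows essentially the same route as the paper, which gives no separate argument for Corollary \ref{corl-f} beyond the remark that it follows from Theorem \ref{thm-b} as $x\to p$, the detailed model being the proof of Corollary \ref{corl-b}: you reproduce that route exactly — the factorization $\binom{-x+n}{n}=(-1)^p\frac{x-p}{x}\binom{-x+n}{n-p}\binom{x}{p}\big/\binom{n}{p}$, the singular-part splittings of $H_{n}(-x)$ and $H_{n}^{\langle2\rangle}(-x)$, the parity-driven treatment of the half-argument terms, and the disposal of the $H_{p}^{\langle2\rangle}$-term via the $k$-weighted identity \eqref{whipple-e} (your observation that $p=1$ is degenerate, where the pole of $\frac{1}{1-x}$ eats the zero of $\binom{-x+n}{n}$, consistent with the $p(p-1)$ denominator, is a point the paper glosses over). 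One small imprecision worth fixing: the simple zero $(x-p)$ cannot by itself cancel a double pole — rather, the $1/(x-p)^2$ contributions must cancel among themselves inside the braces (the product $\big[H_n(x)-H_n(-x)\big]\big[\cdots\big]$ and, in the even case $n-p\equiv0\pmod 2$, the poles of $H_n(\tfrac{x-n}{2})$ and $H_{n+1}(\tfrac{x-n-2}{2})$ supply the compensating $+1/(x-p)^2$ terms against the $-1/(x-p)^2$ from $-H_{n}^{\langle2\rangle}(-x)$), after which the prefactor kills the residual simple pole, exactly as the regrouped displays in the proof of Corollary \ref{corl-b} make explicit.
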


\begin{corl}[Harmonic number identity] \label{corl-g}
 \bnm
 \qquad\sum_{k=0}^n(-1)^k\binm{n}{k}kH_{n+k}^{\langle2\rangle}=\frac{n+1}{n-1}\frac{1}{\binm{2n}{n}}\Big\{H_{2n}-H_{n+1}\Big\}.
 \enm
\end{corl}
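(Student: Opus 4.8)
The plan is to obtain Corollary \ref{corl-g} as the specialization $p=n$ of Corollary \ref{corl-f}, exactly paralleling the way Corollary \ref{corl-c} arises from Corollary \ref{corl-b}. First I would record the elementary binomial identity
$$\binm{n+k}{k}\binm{2n}{n-k}=\binm{2n}{n}\binm{n}{k},$$
which follows at once by writing both sides as $\tfrac{(2n)!}{k!\,(n-k)!\,n!}$. Putting $p=n$ in the left-hand sum of Corollary \ref{corl-f} and applying this identity extracts the constant factor $\binm{2n}{n}$, so that the left-hand side of Corollary \ref{corl-f} at $p=n$ equals $\binm{2n}{n}\sum_{k=0}^n(-1)^k\binm{n}{k}kH_{n+k}^{\langle2\rangle}$, that is, $\binm{2n}{n}$ times the sum appearing in Corollary \ref{corl-g}.

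Next I would evaluate the right-hand side of Corollary \ref{corl-f} at $p=n$, noting that $n-p=0$ places us in the first (even) branch. The prefactor $\tfrac{n(n+1)}{p(p-1)}\tfrac{(-1)^{n-p}}{\binm{n}{p}}$ collapses to $\tfrac{n+1}{n-1}$, since $\binm{n}{n}=1$ and $(-1)^0=1$. Inside the braces the harmonic numbers specialize to $H_{n+p}=H_{2n}$, $H_{n-p}=H_0=0$, $H_{\frac{n+p}{2}}=H_n$ and $H_{\frac{n-p}{2}}=H_0=0$, while the trailing constant becomes $-\tfrac{p}{n(n+1)}=-\tfrac{1}{n+1}$. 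Hence the braced expression reduces to $H_{2n}-H_n-\tfrac{1}{n+1}$.

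Finally I would absorb the constant into the harmonic number via $H_{n+1}=H_n+\tfrac{1}{n+1}$, which gives $H_{2n}-H_n-\tfrac{1}{n+1}=H_{2n}-H_{n+1}$, and then divide both sides by $\binm{2n}{n}$ to isolate the target sum. This yields precisely $\tfrac{n+1}{n-1}\tfrac{1}{\binm{2n}{n}}\{H_{2n}-H_{n+1}\}$, as claimed. I expect no genuine obstacle in this argument; the only steps demanding care are the collapse of the double binomial coefficient into $\binm{2n}{n}\binm{n}{k}$ and the accurate reading of the degenerate harmonic-number arguments at $n-p=0$, together with the implicit restriction $n\ge 2$ that is needed for the factor $\tfrac{1}{n-1}$ to be meaningful.
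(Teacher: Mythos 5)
Your proposal is correct and follows precisely the route the paper intends: Corollary \ref{corl-g} is the case $p=n$ of Corollary \ref{corl-f}, exactly parallel to how the paper explicitly labels Corollary \ref{corl-c} as ``$p=n$ in Corollary \ref{corl-b}''. The key steps all check out: the identity $\binm{n+k}{k}\binm{2n}{n-k}=\binm{2n}{n}\binm{n}{k}$, the reduction of the even branch to $H_{2n}-H_n-\tfrac{1}{n+1}=H_{2n}-H_{n+1}$, and the prefactor collapsing to $\tfrac{n+1}{n-1}$ (with the implicit requirement $n\geq 2$, which is likewise implicit in the paper's statement).
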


\begin{corl} \label{corl-h}
Let $p$ be a positive integer with $p>n$. Then
 \bnm
\quad
\sum_{k=0}^n(-1)^k\binm{n+k}{k}\binm{p+n}{n-k}kH_{p+k}^{\langle2\rangle}=\frac{n+n^2}{2(1-p)}\binm{p-1}{n}
 \Big\{H_{p+n}^{\langle2\rangle}+H_{p-n-1}^{\langle2\rangle}+B(p,n)\Big\},
 \enm
where the symbol on the right hand side stands for
 \bnm
 \:\:B(p,n)=\begin{cases}
(H_{p+n}- H_{p-n}-H_{\frac{p+n}{2}}+H_{\frac{p-n}{2}}-\frac{3}{p-n}-\frac{2p}{n+n^2})\\
\times(H_{p+n}- H_{p-n-1}-H_{\frac{p+n}{2}}+H_{\frac{p-n}{2}})+\frac{2(p^2+2n+n^2)}{(n+n^2)(p-n)^2},&p-n=0\,(\qqdn\mod2);\\[2mm]
  (H_{p+n}- H_{p-n-1}-H_{\frac{p+n-1}{2}}+H_{\frac{p-n-1}{2}}+\frac{2p}{n+n^2})\\
\times(H_{p+n}-H_{p-n-1}-H_{\frac{p+n-1}{2}}+H_{\frac{p-n-1}{2}})-\frac{2}{n+n^2},&p-n=1\,(\qqdn\mod2).
\end{cases}
 \enm
\end{corl}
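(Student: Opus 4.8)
The plan is to derive Corollary \ref{corl-h} from Theorem \ref{thm-b} exactly as Corollary \ref{corl-d} was derived from Theorem \ref{thm-a}: since $p>n$, the binomial factor $\binm{-x+n}{n}$ is regular at $x=p$ (it equals $(-1)^n\binm{p-1}{n}$) and none of the rational terms in Theorem \ref{thm-b} develops a pole there, so I would substitute $x=p$ directly, with no limiting process needed. First I would rewrite the left-hand side using the elementary identity
\[\binm{n}{k}\frac{\binm{n+k}{k}}{\binm{p+k}{k}}=\frac{1}{\binm{p+n}{n}}\binm{n+k}{k}\binm{p+n}{n-k},\]
so that the left-hand side of Theorem \ref{thm-b} at $x=p$ becomes $\binm{p+n}{n}^{-1}\sum_{k=0}^n(-1)^k\binm{n+k}{k}\binm{p+n}{n-k}kH_k^{\langle2\rangle}(p)$. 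Invoking $H_k^{\langle2\rangle}(p)=H_{p+k}^{\langle2\rangle}-H_p^{\langle2\rangle}$ splits this into the target sum plus a multiple of $H_p^{\langle2\rangle}$ times the auxiliary sum $\sum_{k=0}^n(-1)^k\binm{n+k}{k}\binm{p+n}{n-k}k$.

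Next I would translate every argument-shifted harmonic number on the right-hand side of Theorem \ref{thm-b} into ordinary harmonic numbers. For the second order the relations $H_n^{\langle2\rangle}(p)=H_{p+n}^{\langle2\rangle}-H_p^{\langle2\rangle}$ and $H_n^{\langle2\rangle}(-p)=H_{p-1}^{\langle2\rangle}-H_{p-n-1}^{\langle2\rangle}$ produce the $H_{p+n}^{\langle2\rangle}+H_{p-n-1}^{\langle2\rangle}$ part of the answer; for the first order $H_n(p)=H_{p+n}-H_p$ and $H_n(-p)=H_{p-1}-H_{p-n-1}$ handle the integer arguments. The genuinely case-dependent step is the reduction of the half-argument harmonic numbers $H_{n+1}(\tfrac{p-n-2}{2})$ and $H_n(\tfrac{p-n}{2})$. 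When $p-n$ is even these collapse to $H_{\frac{p+n}{2}}$ and $H_{\frac{p-n}{2}}$ (respectively $H_{\frac{p-n-2}{2}}$); when $p-n$ is odd one uses the odd-denominator reduction $\sum_{j=1}^N\tfrac{1}{2j-1}=H_{2N}-\tfrac12H_N$ to obtain $H_{\frac{p+n-1}{2}}$ and $H_{\frac{p-n-1}{2}}$. This parity split is precisely the source of the two branches of $B(p,n)$.

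The remaining ingredient is the evaluation of the auxiliary sum $\sum_{k=0}^n(-1)^k\binm{n+k}{k}\binm{p+n}{n-k}k$, which I would obtain from the $y\to x=p$ specialization of \eqref{whipple-e} (the case $a=-n$, $b=x$, $c=y$ of Lemma \ref{lemm-c}), just as Corollary \ref{corl-d} uses \eqref{whipple-b} for its weight-zero companion. Multiplying this evaluation by $H_p^{\langle2\rangle}$ exactly cancels the $H_p^{\langle2\rangle}$-proportional contributions coming from the left-hand-side split and from the $-H_p^{\langle2\rangle}$ inside $H_n^{\langle2\rangle}(p)$, leaving the stated closed form with no residual $H_p^{\langle2\rangle}$.

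The main obstacle will be the algebraic bookkeeping rather than any conceptual difficulty: one must carry the rational correction terms of Theorem \ref{thm-b} (the coefficients $-\tfrac{2(x^2-n-n^2)}{xn(n+1)}$, $\tfrac{2(x^3-nx^2+n^2+n^3)}{x(x-n)n(n+1)}$ and $\tfrac{4(x^2-nx+n+n^2)}{x(x-n)^2(n+1)}$) through the substitution $x=p$ and combine them with the rational pieces generated by the half-argument reductions. It is this combination that produces the delicate extra terms distinguishing the two cases — the shift from $H_{p-n}$ to $H_{p-n-1}$, the summands $-\tfrac{3}{p-n}-\tfrac{2p}{n+n^2}$ and $\tfrac{2(p^2+2n+n^2)}{(n+n^2)(p-n)^2}$ in the even case, and $\tfrac{2p}{n+n^2}$ together with $-\tfrac{2}{n+n^2}$ in the odd case. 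Verifying that every such rational term lands with the correct coefficient, and that all $H_p^{\langle2\rangle}$ and $H_p$ terms cancel, is where the care is needed; once both parities are checked the two results unify into the single statement of Corollary \ref{corl-h}.
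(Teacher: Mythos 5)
Your route is exactly the paper's: the paper gives no separate proof of Corollary \ref{corl-h}, saying only that it follows from Theorem \ref{thm-b} as $x\to p$, and the template it has in mind is the proof of Corollary \ref{corl-d} --- direct substitution $x=p$ (legitimate here since $p>n$ keeps $x$, $x-n$, $1-x$ and all harmonic-number arguments away from their singularities, so no limit is needed), conversion of the shifted harmonic numbers to ordinary ones with the parity split on the half-arguments, and evaluation of the companion sum $\sum_{k=0}^n(-1)^k\binm{n+k}{k}\binm{p+n}{n-k}k$ via the $y=x$ specialization \eqref{whipple-i} of \eqref{whipple-e}, which gives $\frac{n(n+1)}{1-p}\binm{p-1}{n}$ and feeds the $H_p^{\langle2\rangle}$-cancellation. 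One concrete correction is needed: your conversion $H_n(-p)=H_{p-1}-H_{p-n-1}$ has the wrong sign --- since every term $\frac{1}{k-p}$ is negative for $k\le n<p$, in fact $H_n(-p)=H_{p-n-1}-H_{p-1}$, so that $H_n(p)-H_n(-p)=H_{p+n}-H_{p-n-1}-\tfrac1p$; carried through as written, your version would leave uncancelled $H_p$-type terms incompatible with the stated $B(p,n)$ (the second-order relation $H_n^{\langle2\rangle}(-p)=H_{p-1}^{\langle2\rangle}-H_{p-n-1}^{\langle2\rangle}$ you state is correct, because the squares remove the sign). Relatedly, the $H_p^{\langle2\rangle}$-cancellation is exact only up to a rational remnant: inside the braces the left-hand split contributes $+2H_p^{\langle2\rangle}$ while $H_n^{\langle2\rangle}(p)-H_n^{\langle2\rangle}(-p)$ contributes $-H_p^{\langle2\rangle}-H_{p-1}^{\langle2\rangle}$, leaving $H_p^{\langle2\rangle}-H_{p-1}^{\langle2\rangle}=\tfrac{1}{p^2}$, which must be absorbed into $B(p,n)$ together with the other rational pieces you enumerate; with these two repairs your plan goes through and coincides with the paper's intended derivation.
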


\begin{lemm}\label{lemm-d}
Let $a$, $b$ and $c$ be all complex numbers. Then
 \bnm
&&\xqdn\xxqdn\sum_{k=0}^{\infty}k^2\frac{(a)_k(1-a)_k(1+b)_k}{k!(1+c)_k(1+2b-c)_k}\\
&&\xqdn\xxqdn\:=\:\frac{\beta(a,b,c)}{b(b-1)(b-2)}\frac{\Gamma(\frac{1+c}{2})\Gamma(\frac{2+c}{2})\Gamma(b+\frac{1-c}{2})\Gamma(b+\frac{2-c}{2})}
{\Gamma(\frac{1+a+c}{2})\Gamma(\frac{2-a+c}{2})\Gamma(b+\frac{a-c}{2})\Gamma(b+\frac{1-a-c}{2})}\\
&&\xqdn\xxqdn\:\:+\:\,\frac{\gamma(a,b,c)}{b(b-1)(b-2)}\frac{\Gamma(\frac{1+c}{2})\Gamma(\frac{2+c}{2})\Gamma(b+\frac{1-c}{2})\Gamma(b+\frac{2-c}{2})}
{\Gamma(\frac{a+c}{2})\Gamma(\frac{1-a+c}{2})\Gamma(b+\frac{1+a-c}{2})\Gamma(b+\frac{2-a-c}{2})},
 \enm
where the convergence condition is $Re(b-2)>0$ and
 \bnm
\beta(a,b,c)&&\xqdn=\:c(c-b)(1+b-2bc+c^2)+a(b-3bc+b^2+2c^2)\\
&&\xqdn+\:\,a^2(1-b+3bc-b^2-2c^2)-2a^3+a^4,\\
\gamma(a,b,c)&&\xqdn=\:(c-b)(c-2b)(1+b-2bc+c^2)+a(b-5bc+3b^2+2c^2)\\
&&\xqdn+\:\,a^2(1-b+5bc-3b^2-2c^2)-2a^3+a^4.
 \enm
\end{lemm}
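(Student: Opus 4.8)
The plan is to follow the pattern of Lemma~\ref{lemm-c}, reducing the $k^2$-weighted series to Whipple-type evaluations that are either already available or obtained by one further turn of the same machinery. Writing the summand as $a_k=\frac{(a)_k(1-a)_k(1+b)_k}{k!(1+c)_k(1+2b-c)_k}$ and using $k^2=k(k-1)+k$, I split the left-hand side as $\sum_{k\ge0}k^2a_k=\sum_{k\ge0}k(k-1)a_k+\sum_{k\ge0}k\,a_k$. The second piece is exactly the left-hand side of Lemma~\ref{lemm-c}, so it is quoted verbatim; the whole problem reduces to evaluating $\sum_{k\ge0}k(k-1)a_k$.

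For the falling-factorial piece only the terms $k\ge2$ survive, and the substitution $k\mapsto k+2$ together with $(a)_{k+2}=(a)_2(a+2)_k$, $(1-a)_{k+2}=(1-a)_2(3-a)_k$ and the analogous identities for $1+b$, $1+c$, $1+2b-c$ gives $\sum_{k\ge0}k(k-1)a_k=\frac{(a)_2(1-a)_2(1+b)_2}{(1+c)_2(1+2b-c)_2}\,{}_3F_2[2+a,3-a,3+b;\,3+c,3+2b-c]$. The decisive observation is that the reflection pair $2+a,\,3-a$ of this series sums to $5$, so it lies one level beyond \eqref{whipple-d}, which evaluates only the case whose reflection pair sums to $3$; no substitution of parameters turns one into the other. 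I would therefore first establish a ``sum-five'' Whipple evaluation of ${}_3F_2[a,5-a,b;\,c,2b-c]$, from which the required series follows by the specialisation $a\mapsto a+2$, $b\mapsto 3+b$, $c\mapsto 3+c$ (which indeed reproduces the lower parameters $3+c$ and $3+2b-c$).

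The sum-five Whipple is obtained by prolonging the chain of Kummer \eqref{kummer} and Dixon \eqref{dixon} transformations that produced \eqref{whipple-d} to the next order: applying \eqref{kummer} to ${}_3F_2[a,5-a,b;\,c,2b-c]$ turns it into a Dixon-type series whose free lower entry is $a+b-5$, so one needs the analogue of \eqref{dixon-d} with $3+a-b$ replaced by $5+a-b$; this is reached from \eqref{dixon-d} and \eqref{dixon-c} by two further contiguous relations, raising the relevant lower parameter in two unit steps. Like \eqref{whipple-d} it comes out as a two-branch gamma expression, but now the branch coefficients are polynomials in $a,b,c$. Substituting this into the prefactor $\frac{(a)_2(1-a)_2(1+b)_2}{(1+c)_2(1+2b-c)_2}$ and adding the Lemma~\ref{lemm-c} contribution, both of which are built from the same two Whipple branches, I normalise every gamma factor to the arguments $\Gamma(\tfrac{1+c}{2}),\Gamma(\tfrac{2+c}{2}),\Gamma(b+\tfrac{1-c}{2}),\Gamma(b+\tfrac{2-c}{2})$ in the numerators (and the corresponding shifted arguments in the denominators) by repeated use of $\Gamma(z+1)=z\Gamma(z)$. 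Everything then collects over the single denominator $b(b-1)(b-2)$, and the two branch numerators should simplify to exactly $\beta(a,b,c)$ and $\gamma(a,b,c)$.

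I expect the difficulty to be entirely computational and to sit in two places: first, carrying the Kummer--Dixon chain two contiguity levels higher than in Lemma~\ref{lemm-c} without error, since each step drags along its own gamma prefactor; and second, the degree-four polynomial bookkeeping needed to show that the rational factors created by the gamma shifts, weighted by the sum-five coefficients and by the quadratic coefficients of Lemma~\ref{lemm-c}, collapse precisely to $\beta$ and $\gamma$. Because the scheme is structurally identical to that of Lemma~\ref{lemm-c}, with only the reflection sum and the polynomial degree raised, the conceptual risk is small; a symbolic verification at several numerical values of $a,b,c$ would guard against slips in the algebra.
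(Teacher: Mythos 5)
Your proposal follows essentially the same route as the paper: your split $k^2=k(k-1)+k$ is exactly the paper's double index shift of the series into $\frac{a(1-a)(1+b)}{(1+c)(1+2b-c)}\,{}_3F_2[1+a,2-a,2+b;\,2+c,2+2b-c]$ (the Lemma \ref{lemm-c} series, there evaluated by \eqref{whipple-d}) plus $\frac{(a)_2(1-a)_2(1+b)_2}{(1+c)_2(1+2b-c)_2}\,{}_3F_2[2+a,3-a,3+b;\,3+c,3+2b-c]$, and the paper obtains the required sum-five Whipple evaluation \eqref{whipple-f} precisely as you outline, by prolonging the contiguous-relation chain from \eqref{dixon-d} through the intermediate evaluations \eqref{dixon-e}--\eqref{dixon-g} to \eqref{dixon-h} and then applying Kummer's transformation \eqref{kummer} with the same specialisation $a\mapsto 2+a$, $b\mapsto 3+b$, $c\mapsto 3+c$. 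The only cosmetic difference is that you quote Lemma \ref{lemm-c} for the linear-weight branch where the paper re-evaluates it via \eqref{whipple-d}; the content is identical.
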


\begin{proof}
It is easy to see the continuous relation
  \bnm
\quad
_3F_2\ffnk{cccc}{1}{a,b,c}{3+a-b,1+a-c}&&\xqdn=\:\frac{2+a-b}{2-b}
 {_3F_2}\ffnk{cccc}{1}{a,b,c}{2+a-b,1+a-c}\\
&&\xqdn
+\:\:\frac{a}{b-2}{_3F_2}\ffnk{cccc}{1}{1+a,b,c}{3+a-b,1+a-c}.
 \enm
 Evaluating, respectively, the two series on the right hand side by
 \eqref{dixon-b} and \eqref{dixon-c}, we get
\bmn\label{dixon-e}
 &&\xxqdn\xxqdn_3F_2\ffnk{cccc}{1}{a,b,c}{3+a-b,1+a-c}
  \nnm\\\nnm
&&\xxqdn\xxqdn\:=\:\frac{\sst a^2-a(2b+2c-3)+2(b+c-2)}{2(b-1)(b-2)}
\frac{\Gamma(\frac{2+a}{2})\Gamma(3+a-b)\Gamma(1+a-c)\Gamma(\frac{4+a}{2}-b-c)}
{\Gamma(1+a)\Gamma(\frac{4+a}{2}-b)\Gamma(\frac{2+a}{2}-c)\Gamma(3+a-b-c)}\\
&&\xxqdn\xxqdn\:\:-\:\frac{1}{(b-1)(b-2)}\frac{\Gamma(\frac{1+a}{2})\Gamma(3+a-b)\Gamma(1+a-c)\Gamma(\frac{5+a}{2}-b-c)}
{\Gamma(a)\Gamma(\frac{3+a}{2}-b)\Gamma(\frac{1+a}{2}-c)\Gamma(3+a-b-c)}.
 \emn
It is routine to show the relation
 \bnm\:\:
 _4F_3\ffnk{cccc}{1}{a,b,c,1+x}{4+a-b,1+a-c,x}&&\xqdn\!=\frac{a(3+a-b-x)}{(3-b)x}
 {_3F_2}\ffnk{cccc}{1}{1+a,b,c}{4+a-b,1+a-c}\\
 &&\xqdn\!+\:\frac{(3+a-b)(x-a)}{(3-b)x}{_3F_2}\ffnk{cccc}{1}{a,b,c}{3+a-b,1+a-c}.
 \enm
Calculating, respectively, the two series on the right hand side by
 \eqref{dixon-d} and \eqref{dixon-e}, we gain
 \bnm
 &&\xqdn_4F_3\ffnk{cccc}{1}{a,b,c,1+x}{4+a-b,1+a-c,x}\\
&&\xqdn\:=\:\frac{\begin{cases} a(3+a-b-x)(2-3b+2c+b^2+ac-bc-2c^2)+(x-a)\\
\times(3+a-b-c)(4+3a-6b-2c+a^2+2b^2-2ab-2ac+2bc)
\end{cases}}{2x(1-b)(2-b)(3-b)}\\
&&\xqdn\:\:\times\:\frac{\Gamma(\frac{2+a}{2})\Gamma(4+a-b)\Gamma(1+a-c)\Gamma(\frac{4+a}{2}-b-c)}
{\Gamma(1+a)\Gamma(\frac{4+a}{2}-b)\Gamma(\frac{2+a}{2}-c)\Gamma(4+a-b-c)}\\
&&\xqdn\:\:+\:
\frac{(a-x)(3+a-2b)(3+a-b-c)+(3+a-b-x)(2-3b-2c+b^2-ac+bc)}{2x(1-b)(2-b)(3-b)}\\
&&\xqdn\:\:\times\:\frac{\Gamma(\frac{1+a}{2})\Gamma(4+a-b)\Gamma(1+a-c)\Gamma(\frac{5+a}{2}-b-c)}
{\Gamma(a)\Gamma(\frac{5+a}{2}-b)\Gamma(\frac{1+a}{2}-c)\Gamma(4+a-b-c)}.
 \enm
When the parameter $x$ is specified, the last equation can offer the
following two results:
 \bmn
 &&\xxqdn_3F_2\ffnk{cccc}{1}{a,b,c}{4+a-b,1+a-c}
  \nnm\\\nnm
&&\xxqdn\:=\:\frac{3+2a+a^2-4b+b^2-c-ab-2ac+bc}{(1-b)(2-b)(3-b)}\\
 &&\xxqdn\:\:\times\:\frac{\Gamma(\frac{2+a}{2})\Gamma(4+a-b)\Gamma(1+a-c)\Gamma(\frac{6+a}{2}-b-c)}
{\Gamma(1+a)\Gamma(\frac{4+a}{2}-b)\Gamma(\frac{2+a}{2}-c)\Gamma(4+a-b-c)}
     \nnm\\\nnm
&&\xxqdn\:\:-\:\frac{11+6a+a^2-12b+3b^2-5c-3ab-2ac+3bc}{2(1-b)(2-b)(3-b)}
    \\\label{dixon-f}
 &&\xxqdn\:\:\times\:\frac{\Gamma(\frac{1+a}{2})\Gamma(4+a-b)\Gamma(1+a-c)\Gamma(\frac{5+a}{2}-b-c)}
{\Gamma(a)\Gamma(\frac{5+a}{2}-b)\Gamma(\frac{1+a}{2}-c)\Gamma(4+a-b-c)},\\
 &&\xxqdn_3F_2\ffnk{cccc}{1}{a,b,c}{4+a-b,a-c}
  \nnm\\\nnm
&&\xxqdn\:=\:\frac{b^3+2b^2(c-3)+b(11-7c-2ac+c^2)+c(5+3a+a^2-c-2ac)-6}{(b-1)(b-2)(b-3)}\\
 &&\xxqdn\:\:\times\:\frac{\Gamma(\frac{2+a}{2})\Gamma(4+a-b)\Gamma(a-c)\Gamma(\frac{4+a}{2}-b-c)}
{\Gamma(1+a)\Gamma(\frac{4+a}{2}-b)\Gamma(\frac{a}{2}-c)\Gamma(4+a-b-c)}
 \nnm
 \emn
 \bmn
 \nnm
&&\xxqdn\:\:+\:\frac{b^3-2b^2(c+3)+b(11+7c+2ac-3c^2)+c^2(5+2a)-c(5+3a+a^2)-6}{2(b-1)(b-2)(b-3)}
    \\\label{dixon-g}
&&\xxqdn\:\:\times\:\frac{\Gamma(\frac{1+a}{2})\Gamma(4+a-b)\Gamma(a-c)\Gamma(\frac{5+a}{2}-b-c)}
{\Gamma(a)\Gamma(\frac{5+a}{2}-b)\Gamma(\frac{1+a}{2}-c)\Gamma(4+a-b-c)}.
 \emn
It is not difficult to verify the continuous relation
 \bnm
\xqdn\qqdn
_3F_2\ffnk{cccc}{1}{a,b,c}{5+a-b,a-c}&&\xqdn=\:\frac{(4+a-b)c}{(a-c)(b-4)}
 {_3F_2}\ffnk{cccc}{1}{a,b,c}{4+a-b,1+a-c}\\
&&\xqdn
+\:\:\frac{a(b-c-4)}{(a-c)(b-4)}{_3F_2}\ffnk{cccc}{1}{1+a,b,c}{5+a-b,1+a-c}.
 \enm
 Evaluating, respectively, the two series on the right hand side by
 \eqref{dixon-f} and \eqref{dixon-g}, we have
\bmn\label{dixon-h}
 &&\xxqdn_3F_2\ffnk{cccc}{1}{a,b,c}{5+a-b,a-c}
 \nnm\\\nnm
&&\xxqdn\:=\:\frac{\begin{cases}
 \sst a(b-c-4)[b^3-2b^2(c+3)+b(11+9c+2ac-3c^2)-c(9+5a+a^2-7c-2ac)-6]\\
\sst-c(4+a-2b)(4+a-b-c)[a^2-a(b+2c-2)+(b-1)(b+c-3)]
\end{cases}}{2(b-1)(b-2)(b-3)(b-4)}\\
&&\xxqdn\:\:\times\:\:\frac{\Gamma(\frac{2+a}{2})\Gamma(5+a-b)\Gamma(a-c)\Gamma(\frac{6+a}{2}-b-c)}
{\Gamma(1+a)\Gamma(\frac{6+a}{2}-b)\Gamma(\frac{2+a}{2}-c)\Gamma(5+a-b-c)}
\nnm\\\nnm
 &&\xxqdn\:\:+\:
\frac{\begin{cases}
 \sst b^4-10b^3+35b^2-50b+24+c(2+a-b)(7+4a+a^2-9b-2ab+2b^2)\\
\sst-(28+16a+4a^2-31b-8ab+7b^2)c^2+4(2+a-b)c^3
\end{cases}}{2(b-1)(b-2)(b-3)(b-4)}\\
&&\xxqdn\:\:\times\:\:\frac{\Gamma(\frac{1+a}{2})\Gamma(5+a-b)\Gamma(a-c)\Gamma(\frac{5+a}{2}-b-c)}
{\Gamma(a)\Gamma(\frac{5+a}{2}-b)\Gamma(\frac{1+a}{2}-c)\Gamma(5+a-b-c)}.
 \emn
In terms of \eqref{kummer}, we achieve
 \bnm\quad\:
 _3F_2\ffnk{cccc}{1}{a,5-a,b}{c,2b-c}=\frac{\Gamma(2b-c)\Gamma(b-5)}{\Gamma(2b-a-c)\Gamma(a+b-5)}
 {_3F_2}\ffnk{cccc}{1}{a,c-b,a+c-5}{c,a+b-5}.
 \enm
Calculating the series on the right hand side by \eqref{dixon-h}, we
attain
 \bmn\label{whipple-f}
&&\qqdn_3F_2\ffnk{cccc}{1}{a,5-a,b}{c,2b-c}
 \nnm\\\nnm
&&\qqdn\:=\: \frac{\begin{cases}
 a^4-10a^3+35a^2-50a+24-(b-c)(36-15a-12b-18c\\
-5ab+10ac+12bc+3a^2-3c^2+a^2b-2a^2c-2bc^2+c^3)
\end{cases}}{(a-1)(a-2)(a-3)(a-4)(b-1)(b-2)(b-3)(b-4)(b-5)}\\
&&\qqdn\:\:\times\:\:\frac{16\Gamma(\frac{c}{2})\Gamma(\frac{1+c}{2})\Gamma(b-\frac{c}{2})\Gamma(b-\frac{c-1}{2})}
{\Gamma(\frac{a+c-4}{2})\Gamma(\frac{1-a+c}{2})\Gamma(b-\frac{a+c}{2})\Gamma(b-\frac{5-a+c}{2})}
\nnm\\\nnm &&\qqdn\:\:+\:\:\frac{\begin{cases}
 a^4-10a^3+35a^2-50a+24-(a^2-5a+4b^2-8bc+12-6c+5c^2)\\
(b-c)(c-3)-(b-c)^2(3a^2-15a+48-24c+4c^2)
\end{cases}}{(a-1)(a-2)(a-3)(a-4)(b-1)(b-2)(b-3)(b-4)(b-5)}\\
&&\qqdn\:\:\times\:\:\frac{16\Gamma(\frac{c}{2})\Gamma(\frac{1+c}{2})\Gamma(b-\frac{c}{2})\Gamma(b-\frac{c-1}{2})}
{\Gamma(\frac{a+c-5}{2})\Gamma(\frac{c-a}{2})\Gamma(b-\frac{a+c-1}{2})\Gamma(b-\frac{4-a+c}{2})}.
 \emn
It is easy to see that
 \bnm
&&\sum_{k=0}^{\infty}k^2\frac{(a)_k(1-a)_k(1+b)_k}{k!(1+c)_k(1+2b-c)_k}\\
&&\:=\:\sum_{k=1}^{\infty}k\frac{(a)_k(1-a)_k(1+b)_k}{(k-1)!(1+c)_k(1+2b-c)_k}\\
&&\:=\:\sum_{k=0}^{\infty}(1+k)\frac{(a)_{k+1}(1-a)_{k+1}(1+b)_{k+1}}{k!(1+c)_{k+1}(1+2b-c)_{k+1}}\\
&&\:=\:\frac{a(1-a)(1+b)}{(1+c)(1+2b-c)}{_3F_2}\ffnk{cccc}{1}{1+a,2-a,2+b}{2+c,2+2b-c}\\
&&\:\:+\:\frac{a(1+a)(1-a)(2-a)(1+b)(2+b)}{(1+c)(2+c)(1+2b-c)(2+2b-c)}{_3F_2}\ffnk{cccc}{1}{2+a,3-a,3+b}{3+c,3+2b-c}.
 \enm
Evaluating, respectively, the two series on the right hand side by
\eqref{whipple-d} and \eqref{whipple-f}, we obtain Lemma
\ref{lemm-d}.
\end{proof}

\begin{thm} \label{thm-c}
Let $x$ be a complex number. Then
 \bnm
&&\xxqdn\sum_{k=0}^n(-1)^k\binm{n}{k}\frac{\binm{n+k}{k}}{\binm{x+k}{k}}k^2H_{k}^{\langle2\rangle}(x)
=(-1)^n\frac{(n+n^2)(n+n^2-x)}{2(1-x)(2-x)}\frac{\binm{-x+n}{n}}{\binm{x+n}{n}}\\
&&\xxqdn\:\:\times\:\bigg\{H_{n}^{\langle2\rangle}(x)-H_{n}^{\langle2\rangle}(-x)
+\Big[H_n(x)-H_{n}(-x)-2H_{n+1}(\tfrac{x-n-2}{2})+U_n(x)\Big]\\
&&\xxqdn\:\:\times\:\Big[H_n(x)-H_{n}(-x)\Big]
+H_{n}(\tfrac{x-n}{2})\Big[H_{n}(\tfrac{x-n}{2})-V_n(x)\Big]+W_n(x)\bigg\},
 \enm
where the expressions on the right hand side are
 \bnm
&&\xqdn U_n(x)=\frac{2[(n+n^2)(n+n^2-x)+(1-n-n^2)x^2+x^3-x^4]}{(n+n^2)(n+n^2-x)x},\\
&&\xqdn V_n(x)=\frac{2[n^3(1+n)(1+n+x)-n^2(2+n)x^2-(1-2n-n^2)x^3-(1+n)x^4+x^5]}{(n+n^2)(n+n^2-x)(n-x)x},\\
&&\xqdn
W_n(x)=\frac{2[2n^2(1+n)^2-n^2(5+3n)x-2(1-3n-2n^2)x^2-3(1+n)x^3+2x^4]}{(1+n)(n+n^2-x)(n-x)^2x}.
 \enm
\end{thm}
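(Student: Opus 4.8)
The proof follows the same template as Theorems \ref{thm-a} and \ref{thm-b}, now driven by Lemma \ref{lemm-d} in place of Lemmas \ref{lemm-b} and \ref{lemm-c}. First I would specialize Lemma \ref{lemm-d} by setting $a=-n$, $b=x$ and $c=y$. Since $(-n)_k=0$ for $k>n$ the series terminates, and using $(-n)_k/k!=(-1)^k\binom{n}{k}$ together with $(1+n)_k=k!\binom{n+k}{k}$, $(1+x)_k=k!\binom{x+k}{k}$, $(1+y)_k=k!\binom{y+k}{k}$ and $(1+2x-y)_k=k!\binom{2x-y+k}{k}$, the left-hand side becomes
\[
\sum_{k=0}^n(-1)^kk^2\binom{n}{k}\frac{\binom{n+k}{k}\binom{x+k}{k}}{\binom{y+k}{k}\binom{2x-y+k}{k}}.
\]
On the right the Gamma quotients collapse to ratios of binomial coefficients exactly as in \eqref{whipple-b} and \eqref{whipple-e}, while $\beta(-n,x,y)$, $\gamma(-n,x,y)$ and the denominator $x(x-1)(x-2)$ supply the rational prefactors.

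Next I would apply $\mathcal{D}_y$ to both sides. On the left, differentiating the ratio $\binom{x+k}{k}/[\binom{y+k}{k}\binom{2x-y+k}{k}]$ with $\mathcal{D}_y\binom{y+k}{k}=\binom{y+k}{k}H_k(y)$ and $\mathcal{D}_y\binom{2x-y+k}{k}=-\binom{2x-y+k}{k}H_k(2x-y)$ produces the factor $H_k(2x-y)-H_k(y)$, so that the differentiated identity reads
\[
\sum_{k=0}^n(-1)^kk^2\binom{n}{k}\frac{\binom{n+k}{k}\binom{x+k}{k}}{\binom{y+k}{k}\binom{2x-y+k}{k}}\big\{H_k(2x-y)-H_k(y)\big\}=\Psi_n(x,y),
\]
where $\Psi_n(x,y)$ denotes the $y$-derivative of the closed form obtained in the first step (playing the role of $\Omega_n$ and $\Phi_n$). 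Using $H_k(2x-y)-H_k(y)=2(y-x)\sum_{i=1}^k\frac{1}{(2x-y+i)(y+i)}$, this can be rewritten as
\[
\sum_{k=0}^n(-1)^kk^2\binom{n}{k}\frac{\binom{n+k}{k}\binom{x+k}{k}}{\binom{y+k}{k}\binom{2x-y+k}{k}}\sum_{i=1}^k\frac{1}{(2x-y+i)(y+i)}=\frac{\Psi_n(x,y)}{2(y-x)}.
\]

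Finally I would let $y\to x$. On the left the prefactor tends to $1/\binom{x+k}{k}$ and the inner sum to $\sum_{i=1}^k(x+i)^{-2}=H_k^{\langle2\rangle}(x)$, recovering the left-hand side of Theorem \ref{thm-c}. The right-hand side is of the form $0/0$ (the numerator vanishes at $y=x$ by finiteness of the left-hand side), so L'H\^{o}spital's rule gives $\lim_{y\to x}\Psi_n(x,y)/[2(y-x)]=\tfrac12\,\mathcal{D}_y\Psi_n(x,y)\big|_{y=x}$, which is the evaluation asserted by Theorem \ref{thm-c}.

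The main obstacle is this last evaluation of $\tfrac12\,\mathcal{D}_y\Psi_n(x,y)$ at $y=x$, effectively a second $y$-derivative of the closed form from the first step. Each of its two [rational]$\times$[binomial-ratio] summands contributes, via the product rule, terms in which the logarithmic derivatives of the binomial ratios generate the harmonic numbers $H_n(x)$, $H_n(-x)$, $H_{n+1}(\tfrac{x-n-2}{2})$ and $H_n(\tfrac{x-n}{2})$ together with the second-order difference $H_n^{\langle2\rangle}(x)-H_n^{\langle2\rangle}(-x)$, while the derivatives of the rational parts coming from $\beta$, $\gamma$ and $x(x-1)(x-2)$ produce the rational corrections $U_n(x)$, $V_n(x)$, $W_n(x)$. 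The genuinely laborious step is that at $y=x$ the two Gamma-ratio branches merge, so one must carefully collect the squared and cross logarithmic-derivative terms and consolidate the accompanying rational functions into $U_n$, $V_n$, $W_n$; this bookkeeping, rather than any conceptual difficulty, is where the bulk of the work lies.
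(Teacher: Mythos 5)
Your proposal matches the paper's proof of Theorem \ref{thm-c} essentially step for step: specialize Lemma \ref{lemm-d} at $a=-n$, $b=x$, $c=y$ to get the terminating identity \eqref{whipple-g}, apply $\mathcal{D}_y$ to produce the factor $H_k(2x-y)-H_k(y)$, rewrite this as $2(y-x)\sum_{i=1}^k[(2x-y+i)(y+i)]^{-1}$, and let $y\to x$ via L'H\^{o}spital so that the limit equals half of the second $y$-derivative of the closed form evaluated at $y=x$, which is exactly the paper's $\lim_{y\to x}\mathcal{D}_y^2\Psi_n(x,y)/2$. The only deviations are notational (the paper uses $\Psi_n(x,y)$ for the closed form itself, not for its first derivative as you do) and, like the paper, you leave the final laborious consolidation into $U_n(x)$, $V_n(x)$, $W_n(x)$ as an unexhibited computation.
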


 \begin{proof}
The case $a=-n$, $b=x$ and $c=y$ of Lemma \ref{lemm-d} reads as
 \bmn\label{whipple-g}
\quad\sum_{k=0}^n(-1)^kk^2\binm{n}{k}\frac{\binm{n+k}{k}\binm{x+k}{k}}{\binm{y+k}{k}\binm{2x-y+k}{k}}=\Psi_n(x,y),
 \emn
where the symbol on the right hand side stands for
 \bnm
\Psi_n(x,y)&&\xqdn\!=\frac{\begin{cases}
 y(y-x)(1+x-2xy+y^2)-n(x-3xy+x^2+2y^2)\\
+n^2(1-x+3xy-x^2-2y^2)+2n^3+n^4
\end{cases}}{2x(x-1)(x-2)}\\
&&\xqdn\!\times\:(2x-y)\frac{\binm{\frac{y-n-1}{2}+n}{n}\binm{y-2x+n}{n}}{\binm{\frac{y-n-1}{2}-x+n}{n}\binm{y+n}{n}}
+\frac{\binm{\frac{y-n}{2}+n}{n}\binm{y-2x+n}{n}}{\binm{\frac{y-n}{2}-x+n}{n}\binm{y+n}{n}}(2x-y)(y-n)\\
&&\xqdn\!\times\:\frac{\begin{cases}
 (x-y)(2x-y)(1+x-2xy+y^2)-n(x-5xy+3x^2+2y^2)\\
+n^2(1-x+5xy-3x^2-2y^2)+2n^3+n^4
\end{cases}}{2x(x-1)(x-2)(2x-y+n)}.
 \enm
 Applying the derivative operator $\mathcal{D}_y$ to both sides of
 it, we get
 \bnm
\sum_{k=0}^n(-1)^k\binm{n}{k}\frac{\binm{n+k}{k}\binm{x+k}{k}}{\binm{y+k}{k}\binm{2x-y+k}{k}}
k^2\big\{H_k(2x-y)-H_k(y)\big\}=\mathcal{D}_y\Psi_n(x,y).
 \enm
The last equation can be reformulated as
 \bnm
\:\sum_{k=0}^n(-1)^k\binm{n}{k}\frac{\binm{n+k}{k}\binm{x+k}{k}}{\binm{y+k}{k}\binm{2x-y+k}{k}}
k^2\sum_{i=1}^k\frac{1}{(2x-y+i)(y+i)}=\frac{\mathcal{D}_y\Psi_n(x,y)}{2(y-x)}.
 \enm
Finding the limit $y\to x$ of it by using the relation
 \bnm
&&\xqdn\qqdn\text{Lim}_{y\to x}\frac{\mathcal{D}_y\Psi_n(x,y)}{2(y-x)}\\
&&\xqdn\qqdn\:=\:\text{Lim}_{y\to x}
\frac{\mathcal{D}_y^2\Psi_n(x,y)}{2}\\
&&\xqdn\qqdn\:=\:(-1)^n\frac{(n+n^2)(n+n^2-x)}{2(1-x)(2-x)}\frac{\binm{-x+n}{n}}{\binm{x+n}{n}}\\
&&\xqdn\qqdn\:\:\times\:\bigg\{H_{n}^{\langle2\rangle}(x)-H_{n}^{\langle2\rangle}(-x)
+\Big[H_n(x)-H_{n}(-x)-2H_{n+1}(\tfrac{x-n-2}{2})+U_n(x)\Big]\\
&&\xqdn\qqdn\:\:\times\:\Big[H_n(x)-H_{n}(-x)\Big]
+H_{n}(\tfrac{x-n}{2})\Big[H_{n}(\tfrac{x-n}{2})-V_n(x)\Big]+W_n(x)\bigg\}
 \enm
from L'H\^{o}spital rule, we gain Theorem \ref{thm-c}.

\end{proof}

When $x\to p$, where $p$ a nonnegative integer, Theorem \ref{thm-c}
 can produce the following four corollaries.

\begin{corl}[Harmonic number identity] \label{corl-i}
\bnm
 &&\xxqdn\sum_{k=0}^n(-1)^k\binm{n}{k}\binm{n+k}{k}k^2H_{k}^{\langle2\rangle}=(-1)^n\frac{n^2(n+1)^2}{2}\\
 &&\xxqdn\:\times\:\begin{cases}
 2H_{n}^{\langle2\rangle}-H_{\frac{n}{2}}^{\langle2\rangle}-\frac{1}{n(n+1)},&n=0\,(\qqdn\mod2);\\[2mm]
 2H_{n}^{\langle2\rangle}-H_{\frac{n-1}{2}}^{\langle2\rangle}+\frac{2-3n-3n^2}{n^2(n+1)^2},&n=1\,(\qqdn\mod2).
\end{cases}
 \enm
\end{corl}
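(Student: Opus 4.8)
The plan is to derive Corollary~\ref{corl-i} as the limit $x\to 0$ (the case $p=0$) of Theorem~\ref{thm-c}, following verbatim the route by which Corollary~\ref{corl-a} was extracted from Theorem~\ref{thm-a}. At $x=0$ we have $\binom{x+k}{k}\to 1$ and $H_k^{\langle2\rangle}(x)\to H_k^{\langle2\rangle}$, so the left-hand side of Theorem~\ref{thm-c} becomes exactly the sum to be evaluated. On the right, the prefactor $(-1)^n\frac{(n+n^2)(n+n^2-x)}{2(1-x)(2-x)}\frac{\binom{-x+n}{n}}{\binom{x+n}{n}}$ tends to $(-1)^n\frac{n^2(n+1)^2}{4}$; since the asserted answer carries the factor $(-1)^n\frac{n^2(n+1)^2}{2}$, it remains to show that the braced expression tends to twice the stated branch.

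I would split according to the parity of $n$, because the braced expression is a sum of pieces that are each singular at $x=0$: the rational functions $U_n(x)$, $V_n(x)$, $W_n(x)$ all have a simple pole there, and for even $n$ the half-argument harmonic numbers $H_n(\tfrac{x-n}{2})$ and $H_{n+1}(\tfrac{x-n-2}{2})$ additionally develop a $2/x$ pole from the index at which their denominators vanish. For even $n$ I would peel that term off and re-express the regular remainders through $H_{n/2}(\pm\tfrac{x}{2})$-type sums, exactly as in the even case of Corollary~\ref{corl-a}, then regroup all the explicit $1/x$ contributions so that they cancel and the surviving indeterminacy is concentrated in ratios $s(x)/x$ with $s(x)\to 0$. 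For odd $n$ the two half-argument harmonic numbers are already regular at $x=0$ (with $H_n(\tfrac{x-n}{2})\to 2/n$), so the poles of $V_n$ and $W_n$ cancel against each other, while the pole of $U_n$ pairs with the vanishing factor $H_n(x)-H_n(-x)$ to leave a finite contribution.

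In either parity the remaining $0/0$ (or $\infty\cdot 0$) forms would be evaluated by L'H\^{o}spital's rule, using the identities recorded in the introduction: $\mathcal{D}_x H_n^{\langle\ell\rangle}(x)=-\ell H_n^{\langle\ell+1\rangle}(x)$, the value $\mathcal{D}_x[H_n(x)-H_n(-x)]\big|_{x=0}=-2H_n^{\langle2\rangle}$, and the half-index reductions that turn the differentiated half-argument sums into $H_{n/2}^{\langle2\rangle}$ (even $n$) and $H_{(n-1)/2}^{\langle2\rangle}$ (odd $n$). Multiplying the resulting braced limit by $(-1)^n\frac{n^2(n+1)^2}{4}$ then yields the two branches $2H_n^{\langle2\rangle}-H_{n/2}^{\langle2\rangle}-\tfrac{1}{n(n+1)}$ and $2H_n^{\langle2\rangle}-H_{(n-1)/2}^{\langle2\rangle}+\tfrac{2-3n-3n^2}{n^2(n+1)^2}$.

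I expect the main obstacle to be the bookkeeping of the simple poles. Because the braced expression is a priori a sum of individually divergent terms, one must check that the $1/x$ singularities coming from $U_n$, $V_n$, $W_n$ (and, for even $n$, from the two half-argument harmonic numbers) cancel exactly or else reorganize into well-posed $0/0$ limits; this is the same delicate regrouping performed in the even case of Corollary~\ref{corl-a}, but now with three singular rational functions in play at once, so the rational-function algebra preceding each L'H\^{o}spital step is considerably heavier.
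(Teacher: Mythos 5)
Your proposal is correct and follows exactly the paper's route: the paper states that Corollary~\ref{corl-i} is obtained from Theorem~\ref{thm-c} in the limit $x\to p$ with $p=0$, handled by the same parity split and L'H\^{o}spital evaluations modeled in the proof of Corollary~\ref{corl-a}. Your pole bookkeeping (simple poles of $U_n$, $V_n$, $W_n$ at $x=0$; the extra $2/x$ poles of the half-argument harmonic numbers for even $n$; the cancellations for odd $n$) and the prefactor limit $(-1)^n n^2(n+1)^2/4$ are all accurate, so the plan executes as described.
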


\begin{corl} \label{corl-j}
Let $p$ be a positive integer satisfying $0<p\leq n$. Then
 \bnm
 &&\xqdn\sum_{k=0}^n(-1)^k\binm{n+k}{k}\binm{p+n}{n-k}k^2H_{p+k}^{\langle2\rangle}
 =\frac{(n+n^2)(n+n^2-p)}{p(p-1)(p-2)}\frac{(-1)^{n-p+1}}{\binm{n}{p}}\\
 &&\xqdn\:\times\:\begin{cases}
 H_{n+p}- H_{n-p}-H_{\frac{n+p}{2}}+H_{\frac{n-p}{2}}+\frac{p(1-n-n^2+p-p^2)}{(n+n^2)(n+n^2-p)},&n-p=0\,(\qqdn\mod2);\\[2mm]
  H_{n+p}-H_{n-p}-H_{\frac{n+p-1}{2}}+H_{\frac{n-p-1}{2}}-\frac{p(1-n-n^2+p-p^2)}{(n+n^2)(n+n^2-p)},&n-p=1\,(\qqdn\mod2).
\end{cases}
 \enm
\end{corl}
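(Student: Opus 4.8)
The plan is to obtain Corollary \ref{corl-j} from Theorem \ref{thm-c} by sending $x\to p$, in exact parallel with the passage from Theorem \ref{thm-a} to Corollary \ref{corl-b}. Two elementary reductions set this up. The combinatorial identity $\binm{n}{k}\binm{n+k}{k}/\binm{p+k}{k}=\binm{n+k}{k}\binm{p+n}{n-k}/\binm{p+n}{n}$ converts the left member of Theorem \ref{thm-c} at $x=p$ into the sum appearing in the corollary, and the splitting $H_{k}^{\langle2\rangle}(p)=H_{p+k}^{\langle2\rangle}-H_{p}^{\langle2\rangle}$ then produces $\binm{p+n}{n}^{-1}\{\sum_{k}(-1)^{k}\binm{n+k}{k}\binm{p+n}{n-k}k^{2}H_{p+k}^{\langle2\rangle}-H_{p}^{\langle2\rangle}S_{2}\}$, where $S_{2}=\sum_{k}(-1)^{k}\binm{n+k}{k}\binm{p+n}{n-k}k^{2}$. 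So the work is to compute the $x\to p$ limit of the right member of Theorem \ref{thm-c} and to show $S_{2}=0$.

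The difficulty is that for $0<p\le n$ the prefactor $\binm{-x+n}{n}$ has a simple zero at $x=p$ while the brace of Theorem \ref{thm-c} develops poles there, so the limit is indeterminate and must be organised before substituting. Mirroring the rewriting behind Corollary \ref{corl-b}, I would expose the zero by $\binm{-x+n}{n}=\tfrac{(-1)^{p}}{\binm{n}{p}}\,\tfrac{x-p}{x}\,\binm{-x+n}{n-p}\binm{x}{p}$, whose trailing binomials are regular and nonzero at $x=p$, and then split each singular harmonic function about $x=p$: $H_{n}^{\langle2\rangle}(-x)=H_{p-1}^{\langle2\rangle}(x-p)+\tfrac{1}{(x-p)^{2}}+H_{n-p}^{\langle2\rangle}(p-x)$ and $H_{n}(-x)=-H_{p-1}(x-p)+\tfrac{1}{p-x}+H_{n-p}(p-x)$. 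When $n-p$ is even the half-argument numbers $H_{n}(\tfrac{x-n}{2})$ and $H_{n+1}(\tfrac{x-n-2}{2})$ also carry a $\tfrac{1}{x-p}$ pole, coming from the indices $k=\tfrac{n-p}{2}$ and $k=\tfrac{n-p}{2}+1$ respectively, which must be extracted as well; when $n-p$ is odd these are regular. This dichotomy is the source of the two parity cases in the statement.

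Once the singular parts are isolated, the brace takes the form $\tfrac{R}{x-p}+(\text{regular})$, but only after its double poles cancel, and I would verify this cancellation from the leading coefficients. The term $-H_{n}^{\langle2\rangle}(-x)$ contributes $-1$ to the coefficient of $\tfrac{1}{(x-p)^{2}}$; the product $[H_{n}(x)-H_{n}(-x)-2H_{n+1}(\tfrac{x-n-2}{2})+U_{n}(x)][H_{n}(x)-H_{n}(-x)]$ and the term $H_{n}(\tfrac{x-n}{2})[H_{n}(\tfrac{x-n}{2})-V_{n}(x)]$ contribute $-3$ and $+4$ when $n-p$ is even and $+1$ and $0$ when $n-p$ is odd, so the total double-pole coefficient is $0$ in both cases. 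The surviving simple pole then meets the prefactor through $\tfrac{x-p}{x}\cdot\tfrac{R}{x-p}\to\tfrac{R}{p}$, while every regular summand is annihilated by the zero; hence direct substitution at $x=p$ is legitimate and gives $\sum_{k}(-1)^{k}\binm{n+k}{k}\binm{p+n}{n-k}k^{2}H_{p+k}^{\langle2\rangle}=H_{p}^{\langle2\rangle}S_{2}+(\text{main term})$.

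Finally I would dispose of the stray term by evaluating $S_{2}$. It is the $y\to x\to p$ specialization of \eqref{whipple-g} (equivalently a terminating ${}_2F_1(1)$ after writing $k^{2}=k(k-1)+k$ and shifting the summation index), and it carries the vanishing Chu--Vandermonde factors $(p-n)_{n-1}$ and $(p-n)_{n-2}$, both zero for $3\le p\le n$; the binding constraint $p\ge3$ is exactly what the prefactor $\tfrac{1}{(1-x)(2-x)}$ of Theorem \ref{thm-c} imposes through the denominator $p(p-1)(p-2)$. With $S_{2}=0$ the $H_{p}^{\langle2\rangle}$ contribution drops and the main term is the asserted closed form. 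I expect the genuine labour to be the residue $R$: in the $k^{2}$ setting the brace is a sum of products of two pole-bearing factors together with the rational functions $U_{n},V_{n},W_{n}$, so extracting the coefficient of $\tfrac{1}{x-p}$ (and separately checking the double-pole cancellation above) is where essentially all the bookkeeping lies, doubled by the even/odd split of $n-p$.
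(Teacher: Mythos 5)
Your proposal is correct and follows essentially the same route the paper intends: the paper states Corollary \ref{corl-j} as the $x\to p$ limit of Theorem \ref{thm-c} without writing out the details, and its worked proofs of the parallel Corollaries \ref{corl-b} and \ref{corl-d} use exactly your scheme --- factor the simple zero out of $\binm{-x+n}{n}$, split $H_n(-x)$, $H_n^{\langle2\rangle}(-x)$ and (in the even case) the half-argument harmonic numbers at $x=p$, pass to the limit, and then kill the stray $H_p^{\langle2\rangle}$ term by evaluating the weighted binomial sum via the specialization \eqref{whipple-g}. Your double-pole bookkeeping ($-1$, $-3$, $+4$ in the even case and $-1$, $+1$, $0$ in the odd case) checks out, and you correctly observe what the paper leaves implicit, namely that $S_2=0$ and hence the stated formula require $3\le p\le n$, consistent with the factor $p(p-1)(p-2)$ in the denominator.
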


\begin{corl}[Harmonic number identity]\label{corl-k}
 \bnm
\:\:\:
\sum_{k=0}^n(-1)^k\binm{n}{k}k^2H_{n+k}^{\langle2\rangle}=\frac{n^2(n+1)}{(n-1)(n-2)}\frac{1}{\binm{2n}{n}}\Big\{H_{n+1}-H_{2n}+\tfrac{n-1}{n^2}\Big\}.
 \enm
\end{corl}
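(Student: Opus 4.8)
The plan is to obtain Corollary \ref{corl-k} as the direct specialization $p=n$ of Corollary \ref{corl-j}, exactly as Corollaries \ref{corl-c} and \ref{corl-g} were extracted from their respective predecessors. First I would note that putting $p=n$ forces $n-p=0$, which is even, so only the first (even) branch of Corollary \ref{corl-j} is relevant; this parity bookkeeping is the one point where a sign slip is easy, so I would confirm it at the outset.

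The key algebraic simplification on the left-hand side is the binomial collapse
\[
\binm{n+k}{k}\binm{2n}{n-k}=\binm{2n}{n}\binm{n}{k},
\]
which follows immediately from writing both sides in terms of factorials. This lets me factor $\binm{2n}{n}$ out of the sum, turning the left-hand side of Corollary \ref{corl-j} into $\binm{2n}{n}\sum_{k=0}^n(-1)^k\binm{n}{k}k^2H_{n+k}^{\langle2\rangle}$, which is precisely $\binm{2n}{n}$ times the quantity we wish to evaluate.

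Next I would evaluate the right-hand side at $p=n$. The harmonic-number arguments degenerate: $H_{n-p}=H_0=0$ and $H_{(n-p)/2}=H_0=0$, while $H_{n+p}=H_{2n}$ and $H_{(n+p)/2}=H_n$. The prefactor $\frac{(n+n^2)(n+n^2-p)}{p(p-1)(p-2)}\frac{(-1)^{n-p+1}}{\binm{n}{p}}$ becomes $-\frac{n^2(n+1)}{(n-1)(n-2)}$, and the rational correction term $\frac{p(1-n-n^2+p-p^2)}{(n+n^2)(n+n^2-p)}$ collapses to $\frac{1-2n^2}{n^2(n+1)}$.

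The final step is to reconcile the resulting expression, namely $-\frac{n^2(n+1)}{(n-1)(n-2)\binm{2n}{n}}\big\{H_{2n}-H_n+\tfrac{1-2n^2}{n^2(n+1)}\big\}$, with the stated closed form. Pulling the overall minus sign inside the braces and invoking $H_{n+1}=H_n+\tfrac{1}{n+1}$, the verification reduces to the elementary partial-fraction identity $\frac{2n^2-1}{n^2(n+1)}=\frac{1}{n+1}+\frac{n-1}{n^2}$, which is immediate. I do not anticipate any genuine obstacle here: the whole argument is a specialization followed by routine algebra, and the only things requiring a little care are the parity check (confirming $n-p$ is even at $p=n$) and keeping track of the sign when absorbing $(-1)^{n-p+1}$ into the bracketed harmonic expression.
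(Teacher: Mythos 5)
Your proposal is correct and follows exactly the route the paper intends: Corollary \ref{corl-k} is the $p=n$ specialization of Corollary \ref{corl-j} (mirroring the explicit marker ``$p=n$ in Corollary \ref{corl-b}'' on Corollary \ref{corl-c}), and your binomial collapse $\binm{n+k}{k}\binm{2n}{n-k}=\binm{2n}{n}\binm{n}{k}$, the evaluation of the prefactor and of the correction term as $\frac{1-2n^2}{n^2(n+1)}$, and the final reconciliation via $H_{n+1}=H_n+\frac{1}{n+1}$ and $\frac{2n^2-1}{n^2(n+1)}=\frac{1}{n+1}+\frac{n-1}{n^2}$ all check out. The paper gives no explicit proof of this corollary, so your write-up in fact supplies more detail than the original.
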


\begin{corl} \label{corl-l}
Let $p$ be a positive integer with $p>n$. Then
 \bnm
&&\sum_{k=0}^n(-1)^k\binm{n+k}{k}\binm{p+n}{n-k}k^2H_{p+k}^{\langle2\rangle}=\frac{(n+n^2)(n+n^2-p)}{2(p-1)(p-2)}\binm{p-1}{n}\\
 &&\:\times\:\Big\{H_{p+n}^{\langle2\rangle}+H_{p-n}^{\langle2\rangle}+C(p,n)\big[C(p,n)+D(p,n)\big]
  +\tfrac{2[1-2n(1+n)+(2+n)p-p^2]}{(1+n)(n+n^2-p)(n-p)}\Big\},
 \enm
where the symbols on the right hand side stand for
 \bnm
 &&\xqdn C(p,n)=\begin{cases}
   H_{p+n}- H_{p-n}-H_{\frac{p+n}{2}}+H_{\frac{p-n}{2}},&p-n=0\,(\qqdn\mod2);\\[2mm]
  H_{p-n}-H_{p+n}-H_{\frac{p-n-1}{2}}+H_{\frac{p+n-1}{2}},&p-n=1\,(\qqdn\mod2),
\end{cases}\\
&&\xqdn
D(p,n)=\frac{2[n^2(1+n)^2-n^2(2+n)p-(1-2n-n^2)p^2-(1+n)p^3+p^4]}{(n+n^2)(n+n^2-p)(n-p)}.
 \enm
\end{corl}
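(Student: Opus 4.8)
The plan is to derive Corollary~\ref{corl-l} by the same specialization strategy used for Corollaries~\ref{corl-d} and~\ref{corl-h}, namely by setting $x=p$ directly in Theorem~\ref{thm-c}. The key observation is that for $p>n$ no limiting process is needed: the only quantities on the right-hand side that could diverge are $H_{n}^{\langle2\rangle}(-x)$ and $H_{n}(-x)$, and at $x=p$ these read $H_{n}^{\langle2\rangle}(-p)=\sum_{j=1}^{n}\tfrac{1}{(p-j)^2}$ and $H_{n}(-p)=-\sum_{j=1}^{n}\tfrac{1}{p-j}$, both finite because $p>n$ keeps every index $p-j$ positive. Hence I would simply evaluate Theorem~\ref{thm-c} at $x=p$, the only genuinely removable cases being the isolated values $p=1,2$ where the prefactor is formally $0/0$ and is recovered by continuity.

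To produce the target sum on the left I would use the elementary identity $\binm{n}{k}\binm{p+n}{n}=\binm{p+k}{k}\binm{p+n}{n-k}$, which turns $\binm{n}{k}\binm{n+k}{k}/\binm{p+k}{k}$ into $\binm{n+k}{k}\binm{p+n}{n-k}/\binm{p+n}{n}$, together with $H_{k}^{\langle2\rangle}(p)=H_{p+k}^{\langle2\rangle}-H_{p}^{\langle2\rangle}$. Multiplying the $x=p$ instance of Theorem~\ref{thm-c} by $\binm{p+n}{n}$ then yields
\[
\sum_{k=0}^n(-1)^k\binm{n+k}{k}\binm{p+n}{n-k}k^2H_{p+k}^{\langle2\rangle}
=H_{p}^{\langle2\rangle}\,S_2+\binm{p+n}{n}\,R_n(p),
\]
where $R_n(p)$ denotes the right-hand side of Theorem~\ref{thm-c} at $x=p$ and $S_2=\sum_{k=0}^n(-1)^k\binm{n+k}{k}\binm{p+n}{n-k}k^2$. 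The auxiliary sum $S_2$ equals $\binm{p+n}{n}\,\Psi_n(p,p)$, the diagonal $y=x=p$ value of the left-hand side of~\eqref{whipple-g}, so I would evaluate it by substituting $y=x=p$ into the closed form for $\Psi_n(x,y)$. Since $(-1)^n\binm{-p+n}{n}=\binm{p-1}{n}$ and $2(1-p)(2-p)=2(p-1)(p-2)$, the multiplication by $\binm{p+n}{n}$ already reproduces the announced factor $\tfrac{(n+n^2)(n+n^2-p)}{2(p-1)(p-2)}\binm{p-1}{n}$.

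It then remains to convert the finite-shift harmonic expressions in $R_n(p)$ into ordinary harmonic numbers. Here $H_{n}^{\langle2\rangle}(p)=H_{p+n}^{\langle2\rangle}-H_{p}^{\langle2\rangle}$ and the analogous first-order reductions of $H_n(p),H_n(-p)$ generate the leading $H_{p+n}^{\langle2\rangle}+H_{p-n}^{\langle2\rangle}$ together with a spurious $-2H_{p}^{\langle2\rangle}$; the latter cancels against $H_{p}^{\langle2\rangle}S_2$ once $\Psi_n(p,p)$ is inserted. The reductions of $H_n(\tfrac{x-n}{2})$ and $H_{n+1}(\tfrac{x-n-2}{2})$ at $x=p$ depend on whether $\tfrac{p-n}{2}$ is an integer or a half-integer, which is exactly what forces the two parity branches of $C(p,n)$ in the statement. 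The hard part will be the purely rational bookkeeping: one must verify that the combination of $U_n(p),V_n(p),W_n(p)$ and $\Psi_n(p,p)$ collapses to $D(p,n)$ together with the single correction term $\tfrac{2[1-2n(1+n)+(2+n)p-p^2]}{(1+n)(n+n^2-p)(n-p)}$. This is a lengthy but mechanical simplification of rational functions in $n$ and $p$, with no conceptual difficulty once the substitution and the combinatorial rewriting above are set in place.
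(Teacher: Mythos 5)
Your proposal is correct and is essentially the paper's own route: Corollary \ref{corl-l} is obtained by setting $x=p$ in Theorem \ref{thm-c} (no limiting process being needed for $p>n$), rewriting with $\binom{n}{k}\binom{p+n}{n}=\binom{p+k}{k}\binom{p+n}{n-k}$ and $H_{k}^{\langle2\rangle}(p)=H_{p+k}^{\langle2\rangle}-H_{p}^{\langle2\rangle}$, and cancelling the resulting multiple $H_{p}^{\langle2\rangle}\sum_{k=0}^n(-1)^k\binom{n+k}{k}\binom{p+n}{n-k}k^2$ by means of the $y=x$ case of \eqref{whipple-g}. This is precisely the scheme the paper makes explicit for Corollaries \ref{corl-b} and \ref{corl-d} and leaves implicit for Theorem \ref{thm-c}'s corollaries, so your outline --- including the spurious $-2H_{p}^{\langle2\rangle}$ cancellation, the parity split yielding $C(p,n)$, and the rational bookkeeping yielding $D(p,n)$ --- reproduces the intended argument.
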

\section{The second family of summation formulae involving\\ generalized harmonic numbers}
\begin{thm} \label{thm-d}
Let $x$ be a complex number. Then
 \bnm
&&\xxqdn\sum_{k=0}^n(-1)^k\binm{n}{k}\frac{\binm{n+k}{k}}{\binm{x+k}{k}}H_{k}^{2}(x)
=\frac{(-1)^n}{2}\frac{\binm{-x+n}{n}}{\binm{x+n}{n}}\bigg\{H_{n}^{\langle2\rangle}(x)-H_{n}^{\langle2\rangle}(-x)\\
&&\xxqdn\:\:+\:2\Big[H_n(x)+H_{n}(-x)\Big]^2-\Big[H_n(x)-H_{n}(-x)-H_n(\tfrac{x-n}{2})\Big]\\
&&\xxqdn\:\:\times\:\Big[H_n(x)-H_{n}(-x)-H_n(\tfrac{x-n}{2})-\tfrac{2(x+n)}{x(x-n)}\Big]-\tfrac{4n}{x(x-n)^2}\bigg\}.
 \enm
\end{thm}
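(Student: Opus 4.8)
The plan is to deduce Theorem \ref{thm-d} from Theorem \ref{thm-a} by first manufacturing a companion identity for the \emph{sum} $H_k^{2}(x)+H_k^{\langle2\rangle}(x)$ and then subtracting off the pure $H_k^{\langle2\rangle}(x)$-contribution. The starting point is not a further differentiation in $y$ (which, as in Theorems \ref{thm-a}--\ref{thm-c}, only ever produces differences $H_k(2x-y)-H_k(y)$ and hence second-order harmonic numbers at $y=x$), but rather the plain specialization $y=x$ of the master identity \eqref{whipple-b}. There the summand collapses to $(-1)^k\binm{n}{k}\binm{n+k}{k}\big/\binm{x+k}{k}$, and a short simplification of the two terms on the right of \eqref{whipple-b} gives the closed form
\[
\sum_{k=0}^n(-1)^k\binm{n}{k}\frac{\binm{n+k}{k}}{\binm{x+k}{k}}=(-1)^n\frac{\binm{-x+n}{n}}{\binm{x+n}{n}}=:S(x).
\]

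Next I differentiate this identity twice in $x$. On the summand side, Lemma \ref{lemm-a} together with $\mathcal{D}_xH_k(x)=-H_k^{\langle2\rangle}(x)$ gives $\mathcal{D}_x\binm{x+k}{k}^{-1}=-\binm{x+k}{k}^{-1}H_k(x)$ and therefore
\[
\mathcal{D}_x^2\binm{x+k}{k}^{-1}=\binm{x+k}{k}^{-1}\big\{H_k^{2}(x)+H_k^{\langle2\rangle}(x)\big\}.
\]
This is the key observation: a single function whose second derivative reproduces exactly the square $H_k^{2}(x)$. On the closed-form side, $\mathcal{D}_x\log S(x)=-[H_n(x)+H_n(-x)]$, and differentiating once more (using $\mathcal{D}_xH_n(x)=-H_n^{\langle2\rangle}(x)$ and $\mathcal{D}_xH_n(-x)=H_n^{\langle2\rangle}(-x)$) yields $\mathcal{D}_x^2S(x)=S(x)\{[H_n(x)+H_n(-x)]^2+H_n^{\langle2\rangle}(x)-H_n^{\langle2\rangle}(-x)\}$. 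Equating the two computations produces the companion identity
\[
\sum_{k=0}^n(-1)^k\binm{n}{k}\frac{\binm{n+k}{k}}{\binm{x+k}{k}}\big\{H_k^{2}(x)+H_k^{\langle2\rangle}(x)\big\}=S(x)\big\{[H_n(x)+H_n(-x)]^2+H_n^{\langle2\rangle}(x)-H_n^{\langle2\rangle}(-x)\big\}.
\]

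Finally I subtract Theorem \ref{thm-a} to strip away the $H_k^{\langle2\rangle}(x)$ part. Writing the brace of Theorem \ref{thm-a} as $[H_n^{\langle2\rangle}(x)-H_n^{\langle2\rangle}(-x)]+\tfrac{4n}{x(x-n)^2}+P$, where $P$ abbreviates the product $[H_n(x)-H_n(-x)-H_n(\tfrac{x-n}{2})]\big[H_n(x)-H_n(-x)-H_n(\tfrac{x-n}{2})-\tfrac{2(x+n)}{x(x-n)}\big]$, the subtraction leaves precisely $\tfrac12 S(x)\big\{[H_n^{\langle2\rangle}(x)-H_n^{\langle2\rangle}(-x)]+2[H_n(x)+H_n(-x)]^2-P-\tfrac{4n}{x(x-n)^2}\big\}$, which is the stated right-hand side of Theorem \ref{thm-d}.

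The main obstacle is expected to be twofold and essentially clerical. First, the $y=x$ evaluation of \eqref{whipple-b} must be performed carefully, since each of its two summands carries factors that become $0/0$ or cancel against a binomial before one may substitute $y=x$; checking the small case $n=1$ (where both terms reduce to $\tfrac{x-1}{2(x+1)}$, summing to $\tfrac{x-1}{x+1}=S(x)$) is a useful guard. Second, the closing bookkeeping must confirm that after subtracting Theorem \ref{thm-a} the product $P$ reappears with a minus sign and the two copies of $\tfrac{4n}{x(x-n)^2}$ combine to a single negative copy. Both steps are mechanical once the companion identity is established; the genuinely new ingredient is the remark that $\mathcal{D}_x^2\binm{x+k}{k}^{-1}$ delivers exactly $H_k^{2}(x)+H_k^{\langle2\rangle}(x)$, which is what allows the square $H_k^{2}(x)$ to be captured at all.
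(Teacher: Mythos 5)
Your proposal is correct and takes essentially the same route as the paper's own proof: specialize $y=x$ in \eqref{whipple-b} to get the closed form $S(x)$, apply the second derivative in $x$ to both sides (the paper writes $\mathcal{D}_y^2$, an evident typo for $\mathcal{D}_x^2$, which you implicitly correct), and subtract Theorem \ref{thm-a}. Your key observation that $\mathcal{D}_x^2\binom{x+k}{k}^{-1}=\binom{x+k}{k}^{-1}\{H_k^{2}(x)+H_k^{\langle2\rangle}(x)\}$ and your final bookkeeping both match the paper's argument exactly.
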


\begin{proof}
Fix $y=x$ in \eqref{whipple-b} to achieve
 \bmn\label{whipple-h}
\sum_{k=0}^n(-1)^k\binm{n}{k}\frac{\binm{n+k}{k}}{\binm{x+k}{k}}
=(-1)^n\frac{\binm{-x+n}{n}}{\binm{x+n}{n}}.
 \emn
 Applying the derivative operator $\mathcal{D}_y^2$ to both sides of
 it, we attain
\bnm
&&\xxqdn\qqdn\sum_{k=0}^n(-1)^k\binm{n}{k}\frac{\binm{n+k}{k}}{\binm{x+k}{k}}
 \Big\{H_{k}^{2}(x)+H_{k}^{\langle2\rangle}(x)\Big\}\\
&&\xxqdn\qqdn\:=\:(-1)^n\frac{\binm{-x+n}{n}}{\binm{x+n}{n}}
 \Big\{\big[H_{n}(x)+H_{n}(-x)\big]^2+H_{n}^{\langle2\rangle}(x)-H_{n}^{\langle2\rangle}(-x)\Big\}.
 \enm
The difference of the last equation and Theorem \ref{thm-a} gives
Theorem \ref{thm-d}.
\end{proof}

When $x\to p$, where $p$ a nonnegative integer, we can derive the
following four corollaries from Theorem \ref{thm-d}.

\begin{corl}[Harmonic number identity]\label{corl-m}
\bnm
 \qquad\sum_{k=0}^n(-1)^k\binm{n}{k}\binm{n+k}{k}H_{k}^2=
 \begin{cases}
 (-1)^n\Big\{4H_{n}^2-2H_{n}^{\langle2\rangle}+H_{\frac{n}{2}}^{2}\Big\},&n=0\,(\qqdn\mod2);\\[2mm]
  (-1)^n\Big\{4H_{n}^2-2H_{n}^{\langle2\rangle}+H_{\frac{n-1}{2}}^{\langle2\rangle}\Big\},&n=1\,(\qqdn\mod2).
\end{cases}
 \enm
\end{corl}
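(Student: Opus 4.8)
The plan is to obtain this corollary as the $x\to 0$ specialisation of Theorem \ref{thm-d}, and the most economical route is to exploit the way Theorem \ref{thm-d} was assembled rather than to differentiate its final form. Recall that in the proof of Theorem \ref{thm-d} one first applies the second-order derivative operator to \eqref{whipple-h} to produce the auxiliary relation
\[
\sum_{k=0}^n(-1)^k\binom{n}{k}\frac{\binom{n+k}{k}}{\binom{x+k}{k}}\Big\{H_{k}^{2}(x)+H_{k}^{\langle2\rangle}(x)\Big\}
=(-1)^n\frac{\binom{-x+n}{n}}{\binom{x+n}{n}}\Big\{[H_{n}(x)+H_{n}(-x)]^2+H_{n}^{\langle2\rangle}(x)-H_{n}^{\langle2\rangle}(-x)\Big\},
\]
and only afterwards subtracts Theorem \ref{thm-a}. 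The key observation is that, \emph{unlike} Theorem \ref{thm-d} itself, the right-hand side of this auxiliary relation is regular at $x=0$: every $H_{n}(\pm x)$ and $H_{n}^{\langle2\rangle}(\pm x)$ has its poles away from the origin, so no $1/x$ singularity is present.

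First I would set $x=0$ directly in the auxiliary relation. On the left $\binom{x+k}{k}\to 1$, $H_k(0)=H_k$ and $H_k^{\langle2\rangle}(0)=H_k^{\langle2\rangle}$; on the right $\binom{-x+n}{n}/\binom{x+n}{n}\to 1$ and $H_n(\pm x)\to H_n$, so that $[H_n(x)+H_n(-x)]^2\to 4H_n^2$, while the antisymmetric combination $H_n^{\langle2\rangle}(x)-H_n^{\langle2\rangle}(-x)\to 0$. This yields the clean identity
\[
\sum_{k=0}^n(-1)^k\binom{n}{k}\binom{n+k}{k}\Big\{H_{k}^{2}+H_{k}^{\langle2\rangle}\Big\}=(-1)^n\,4H_n^2 .
\]

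Finally I would subtract Corollary \ref{corl-a}, which already evaluates the companion sum $\sum_{k=0}^n(-1)^k\binom{n}{k}\binom{n+k}{k}H_{k}^{\langle2\rangle}$ in closed form, to isolate the desired $H_k^2$-sum. The even/odd distinction appearing in the statement then arises solely from the two cases of Corollary \ref{corl-a}, carried over through this single subtraction; no new case analysis is needed.

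The genuinely delicate part is located upstream, inside Corollary \ref{corl-a}: all the subtlety sits in the parity-dependent behaviour of $H_n(\tfrac{x-n}{2})$ near $x=0$, which for even $n$ develops a simple pole that must cancel against the terms $\tfrac{2(x+n)}{x(x-n)}$ and $\tfrac{4n}{x(x-n)^2}$. If instead one took the $x\to 0$ limit of Theorem \ref{thm-d} verbatim, one would have to resolve the resulting $0/0$ indeterminacy by L'H\^{o}spital's rule with separate even and odd treatments, exactly as in the proof of Corollary \ref{corl-a}. The subtraction route above sidesteps this difficulty entirely, precisely because the auxiliary relation is pole-free and the parity split has already been absorbed into Corollary \ref{corl-a}.
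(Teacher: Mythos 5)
Your proposal is correct, and it is a genuinely cleaner route than the one the paper indicates. The paper's stated derivation of this corollary is the $x\to 0$ specialisation of Theorem \ref{thm-d} itself (``When $x\to p$\dots we can derive the following four corollaries from Theorem \ref{thm-d}''), which, exactly as you note, forces a L'H\^{o}spital resolution of the $0/0$ indeterminacy coming from $H_n(\tfrac{x-n}{2})$ against the terms $\tfrac{2(x+n)}{x(x-n)}$ and $\tfrac{4n}{x(x-n)^2}$, with a separate parity analysis mirroring the proof of Corollary \ref{corl-a}. You instead undo the subtraction by which Theorem \ref{thm-d} was assembled: since Theorem \ref{thm-d} equals (auxiliary relation) minus (Theorem \ref{thm-a}), its $x\to 0$ limit equals (auxiliary relation at $x=0$) minus (Corollary \ref{corl-a}), and the auxiliary relation obtained by applying $\mathcal{D}_x^2$ to \eqref{whipple-h} is indeed regular at $x=0$ (your computation checks: the left side contributes $H_k^2(x)+H_k^{\langle2\rangle}(x)$, the right side $[H_n(x)+H_n(-x)]^2+H_n^{\langle2\rangle}(x)-H_n^{\langle2\rangle}(-x)$, and the antisymmetric part vanishes at $x=0$, giving $\sum_{k=0}^n(-1)^k\binom{n}{k}\binom{n+k}{k}\{H_k^2+H_k^{\langle2\rangle}\}=(-1)^n4H_n^2$). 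This is logically the same ingredients as the paper's, but reorganised so that no new limit computation is needed; all parity subtlety is inherited from Corollary \ref{corl-a}, as you say.

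One substantive point your derivation brings out: your route yields $H_{\frac{n}{2}}^{\langle2\rangle}$ in the even case, whereas the corollary as printed has $H_{\frac{n}{2}}^{2}$. Your version is the correct one and the printed exponent is a misprint. A numerical check at $n=4$ confirms this: the left-hand side equals $\tfrac{1135}{72}$, which matches $4H_4^2-2H_4^{\langle2\rangle}+H_2^{\langle2\rangle}$ but not $4H_4^2-2H_4^{\langle2\rangle}+H_2^{2}$ (the two differ since $H_2^{\langle2\rangle}=\tfrac{5}{4}\neq\tfrac{9}{4}=H_2^2$); the discrepancy is invisible at $n=2$ because $H_1^2=H_1^{\langle2\rangle}$. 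So your proof establishes the corrected statement, which is what the paper intended, consistent with the odd case and with the pattern of Corollary \ref{corl-a}.
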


\begin{corl} \label{corl-n}
Let $p$ be a positive integer satisfying $0<p\leq n$. Then
 \bnm
 &&\sum_{k=0}^n(-1)^k\binm{n+k}{k}\binm{p+n}{n-k}H_{p+k}^2=\frac{(-1)^{n-p+1}}{p\binm{n}{p}}\\
 &&\:\times\:\begin{cases}
 H_{n+p}+3H_{n-p}-2H_{p-1}+H_{\frac{n+p}{2}}-H_{\frac{n-p}{2}},&n-p=0\,(\qqdn\mod2);\\[2mm]
  H_{n+p}+3H_{n-p}-2H_{p-1}+H_{\frac{n+p-1}{2}}-H_{\frac{n-p-1}{2}},&n-p=1\,(\qqdn\mod2).
\end{cases}
 \enm
\end{corl}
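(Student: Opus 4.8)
The plan is to obtain Corollary~\ref{corl-n} from Theorem~\ref{thm-d} by the same specialization $x\to p$ that carried Theorem~\ref{thm-a} to Corollary~\ref{corl-b}. Accordingly I would split into the two parity classes $n-p\equiv0$ and $n-p\equiv1\pmod2$ and treat each by rewriting the right-hand side of Theorem~\ref{thm-d} so that the removable behaviour at $x=p$ becomes visible. Concretely, I would factor the prefactor as $\tfrac{(-1)^n}{2}\tfrac{\binom{-x+n}{n}}{\binom{x+n}{n}}=\tfrac{(-1)^{n+p}}{2}\tfrac{x-p}{x}\tfrac{\binom{-x+n}{n-p}\binom{x}{p}}{\binom{x+n}{n}\binom{n}{p}}$, exhibiting the simple zero $(x-p)$, and decompose each harmonic factor by splitting its range at the index $p$, e.g. $H_n^{\langle2\rangle}(-x)=H_{p-1}^{\langle2\rangle}(x-p)+\tfrac{1}{(p-x)^2}+H_{n-p}^{\langle2\rangle}(p-x)$ together with the analogous splitting of $H_n(-x)$, so that every singular contribution is displayed explicitly as a pole in $p-x$.

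The key check at this stage is that the apparent double pole cancels inside the braces of Theorem~\ref{thm-d}: its sources are $-H_n^{\langle2\rangle}(-x)$ (coefficient $-1$), the term $2[H_n(x)+H_n(-x)]^2$ (coefficient $+2$), and the squared bracket (coefficient $-1$, its pole coming from $-H_n(-x)$ and, when $n-p$ is even, also from the pole of $H_n(\tfrac{x-n}{2})$ at $x=p$), which sum to $0$. Hence after the factor $x-p$ only a simple pole survives, and L'H\^{o}pital's rule, exactly as in the proof of Corollary~\ref{corl-b}, produces a finite limit. On the left-hand side I would use $\binom{n}{k}\tfrac{\binom{n+k}{k}}{\binom{p+k}{k}}=\tfrac{1}{\binom{p+n}{n}}\binom{n+k}{k}\binom{p+n}{n-k}$ together with $H_k(p)=H_{p+k}-H_p$, turning the specialized series into $\tfrac{1}{\binom{p+n}{n}}\sum_k(-1)^k\binom{n+k}{k}\binom{p+n}{n-k}(H_{p+k}-H_p)^2$.

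It remains to dispose of the two auxiliary sums that appear upon expanding $(H_{p+k}-H_p)^2=H_{p+k}^2-2H_pH_{p+k}+H_p^2$. The constant sum $\sum_k(-1)^k\binom{n+k}{k}\binom{p+n}{n-k}$ vanishes, because \eqref{whipple-h} evaluates it through $\binom{-p+n}{n}=\binom{n-p}{n}=0$, which removes the $H_p^2$ contribution. The genuinely new ingredient, with no analogue in Corollary~\ref{corl-b}, is the cross term $-2H_p\sum_k(-1)^k\binom{n+k}{k}\binom{p+n}{n-k}H_{p+k}$. I would evaluate this first-order sum by applying $\mathcal{D}_x$ once to \eqref{whipple-h}, giving $\sum_k(-1)^k\binom{n}{k}\tfrac{\binom{n+k}{k}}{\binom{x+k}{k}}H_k(x)=(-1)^n\tfrac{\binom{-x+n}{n}}{\binom{x+n}{n}}[H_n(x)+H_n(-x)]$, and then letting $x\to p$. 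Here $\binom{-x+n}{n}$ has a simple zero while $H_n(-x)$ has a simple pole at $x=p$, so their product has the finite residue $(-1)^{p-1}(p-1)!(n-p)!/n!$; this yields $\sum_k(-1)^k\binom{n+k}{k}\binom{p+n}{n-k}H_{p+k}=\tfrac{(-1)^{n-p+1}}{p\binom{n}{p}}$.

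The main obstacle is the bookkeeping of the singular terms: one must track the $\tfrac{1}{p-x}$ and $\tfrac{1}{(p-x)^2}$ parts through the squared bracket of Theorem~\ref{thm-d}, confirm the double-pole cancellation above, and correctly extract the surviving residue after multiplication by $x-p$. The extra cross term is what converts the bare $H_p$ emerging from the limit into the $-2H_{p-1}$ of the stated formula via $H_{p-1}=H_p-\tfrac{1}{p}$, and it is precisely this term that distinguishes Corollary~\ref{corl-n} from its second-order counterpart Corollary~\ref{corl-b}. Finally I would verify that the two parity computations collapse to the single displayed pair of cases, completing the proof.
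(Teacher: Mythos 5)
Your proposal is correct and takes essentially the route the paper intends: Corollary \ref{corl-n} is derived from Theorem \ref{thm-d} by the limit $x\to p$, executed with exactly the manipulations of the paper's detailed proof of Corollary \ref{corl-b} (the same factorization of the prefactor exhibiting the zero $x-p$, the same splittings of $H_n(-x)$ and $H_n^{\langle2\rangle}(-x)$ at the index $p$, and the same residue extraction after the double-pole cancellation, which you verify correctly). Your one added ingredient---evaluating the cross term by applying $\mathcal{D}_x$ once to \eqref{whipple-h} to get $\sum_{k=0}^n(-1)^k\binom{n+k}{k}\binom{p+n}{n-k}H_{p+k}=\frac{(-1)^{n-p+1}}{p\binom{n}{p}}$---is precisely the step the paper leaves implicit when it states that the corollary follows from Theorem \ref{thm-d}, and your value for it (and the resulting conversion of $H_p$ into $H_{p-1}$ via $H_{p-1}=H_p-\frac1p$) checks out.
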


\begin{corl}[Harmonic number identity] \label{corl-o}
 \bnm
 \qquad\qquad\sum_{k=0}^n(-1)^k\binm{n}{k}H_{n+k}^2=\frac{1}{n\binm{2n}{n}}\Big\{H_{n}-H_{2n}-\tfrac{2}{n}\Big\}.
 \enm
\end{corl}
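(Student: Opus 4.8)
The plan is to obtain this identity as the specialization $p=n$ of Corollary \ref{corl-n}, in exact analogy with how Corollary \ref{corl-c} follows from Corollary \ref{corl-b}. Since $p=n$ satisfies the hypothesis $0<p\le n$, the corollary applies, and because $n-p=0$ is even the first branch is the relevant one. First I would substitute $p=n$ into the left-hand side of Corollary \ref{corl-n}, whose summand then carries the factor $\binm{n+k}{k}\binm{p+n}{n-k}=\binm{n+k}{k}\binm{2n}{n-k}$.

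The key algebraic observation is the factorization
\[
\binm{n+k}{k}\binm{2n}{n-k}=\binm{2n}{n}\binm{n}{k},
\]
which is immediate from cancelling factorials, since both sides equal $\tfrac{(2n)!}{k!\,n!\,(n-k)!}$. This lets me pull the constant $\binm{2n}{n}$ out of the sum, so that the left-hand side of Corollary \ref{corl-n} at $p=n$ becomes $\binm{2n}{n}\sum_{k=0}^n(-1)^k\binm{n}{k}H_{n+k}^2$, which is exactly $\binm{2n}{n}$ times the sum appearing in the statement to be proved.

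It then remains to simplify the right-hand side. With $p=n$ the prefactor is $\tfrac{(-1)^{n-p+1}}{p\binm{n}{p}}=\tfrac{-1}{n}$, and the bracketed expression in the first branch collapses using $H_{n-p}=H_0=0$, $H_{\frac{n-p}{2}}=H_0=0$, $H_{n+p}=H_{2n}$, $H_{\frac{n+p}{2}}=H_n$ and $H_{p-1}=H_{n-1}=H_n-\tfrac1n$ to
\[
H_{2n}-2H_{n-1}+H_n=H_{2n}-H_n+\tfrac2n.
\]
Multiplying by the prefactor $-\tfrac1n$ gives $\tfrac1n\{H_n-H_{2n}-\tfrac2n\}$, and dividing through by $\binm{2n}{n}$ yields the claimed formula. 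I expect no genuine obstacle here: the only points requiring care are remembering that the half-order harmonic number $H_{(n-p)/2}$ also vanishes at $p=n$, and correctly trading $H_{n-1}$ for $H_n-\tfrac1n$, which is precisely what produces the $-\tfrac2n$ term distinguishing this result from its $H^{\langle2\rangle}$ analogue in Corollary \ref{corl-c}.
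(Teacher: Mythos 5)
Your proposal is correct and follows the paper's own (implicit) route: the paper obtains this identity as the $p=n$ specialization of Corollary \ref{corl-n}, exactly as Corollary \ref{corl-c} is labelled the $p=n$ case of Corollary \ref{corl-b}, with the factorization $\binm{n+k}{k}\binm{2n}{n-k}=\binm{2n}{n}\binm{n}{k}$ absorbing the constant $\binm{2n}{n}$. Your simplification of the right-hand side, including $H_{n-1}=H_n-\tfrac1n$ producing the $-\tfrac2n$ term, matches the stated result, so nothing further is needed.
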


\begin{corl} \label{corl-p}
Let $p$ be a positive integer with $p>n$. Then
 \bnm
 &&\xxqdn\sum_{k=0}^n(-1)^k\binm{n+k}{k}\binm{p+n}{n-k}H_{p+k}^2=\frac{1}{2}\binm{p-1}{n}\\
&&\xxqdn\:\times\:\bigg\{H_{p+n}^{\langle2\rangle}+H_{p-n}^{\langle2\rangle}-2H_{p}^{\langle2\rangle}
+2H_p^2-\tfrac{4n}{p(p-n)}H_p-\tfrac{4n}{p^2(p-n)}\\
&&\xxqdn\:+\:2\Big[H_{p+n}+ H_{p-n}-2H_{p}\Big]\Big[H_{p+n}+
H_{p-n}-\tfrac{2n}{p(p-n)}\Big]-A(p,n)\bigg\},
 \enm
where the expression $A(p,n)$ on the right hand side has appeared in
Corollary \ref{corl-d}.
\end{corl}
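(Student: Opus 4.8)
The plan is to obtain Corollary~\ref{corl-p} by specializing Theorem~\ref{thm-d} to $x=p$. Since $p>n$, the quantities $\binom{p+k}{k}$, $p$, and $p-n$ are all nonzero, so no L'H\^{o}spital limit is needed and one simply substitutes $x=p$. First I would multiply both sides of Theorem~\ref{thm-d} by $\binom{p+n}{n}$ and apply the reindexing identity $\binm{n}{k}\binm{n+k}{k}\binm{p+n}{n}/\binm{p+k}{k}=\binm{n+k}{k}\binm{p+n}{n-k}$ together with $(-1)^{n}\binm{-p+n}{n}=\binm{p-1}{n}$. This turns the left side into $\sum_{k=0}^{n}(-1)^{k}\binm{n+k}{k}\binm{p+n}{n-k}H_{k}^{2}(p)$ and collapses the prefactor $\tfrac{(-1)^{n}}{2}\binm{-p+n}{n}/\binm{p+n}{n}$ into the clean $\tfrac12\binm{p-1}{n}$ seen in the statement.

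The key complication is that the kernel now carries $H_{k}^{2}(p)$, whereas the stated identity is phrased through the classical $H_{p+k}^{2}$. Writing $H_{k}(p)=H_{p+k}-H_{p}$ gives $H_{k}^{2}(p)=H_{p+k}^{2}-2H_{p}H_{p+k}+H_{p}^{2}$, so the target sum equals the $x=p$ value of Theorem~\ref{thm-d} plus the correction $2H_{p}\sum_{k}(\cdots)H_{p+k}-H_{p}^{2}\sum_{k}(\cdots)$. I would supply these two moments directly from \eqref{whipple-h}: its value at $x=p$ gives the zeroth moment $\sum_{k=0}^{n}(-1)^{k}\binm{n+k}{k}\binm{p+n}{n-k}=\binm{p-1}{n}$, while applying $\mathcal{D}_{x}$ once to \eqref{whipple-h} (using Lemma~\ref{lemm-a} on the log-derivative, exactly as in the proofs of Theorems~\ref{thm-a}--\ref{thm-c}) yields the first moment, which at $x=p$ reads $\sum_{k}(\cdots)H_{p+k}=\binm{p-1}{n}\{H_{n}(p)+H_{n}(-p)+H_{p}\}$.

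Next I would convert each generalized harmonic number at the special arguments into classical ones: $H_{n}^{\langle2\rangle}(p)=H_{p+n}^{\langle2\rangle}-H_{p}^{\langle2\rangle}$, $H_{n}^{\langle2\rangle}(-p)=H_{p-1}^{\langle2\rangle}-H_{p-n-1}^{\langle2\rangle}$, $H_{n}(p)=H_{p+n}-H_{p}$, $H_{n}(-p)=H_{p-n-1}-H_{p-1}$, and the half-integer term $H_{n}(\tfrac{p-n}{2})$, the last of which forces the split into the two cases $p-n$ even and $p-n$ odd. The product $-[H_{n}(x)-H_{n}(-x)-H_{n}(\tfrac{x-n}{2})][\cdots]$ in Theorem~\ref{thm-d} is precisely the one that defines $A(p,n)$ in Corollary~\ref{corl-d}, so at $x=p$ it reproduces $-A(p,n)$ up to the correction terms $\tfrac1p-\tfrac1{p-n}=-\tfrac{n}{p(p-n)}$ and $\tfrac1{p^{2}}-\tfrac1{(p-n)^{2}}$ that arise when shifting $H_{p-1},H_{p-n-1}$ back to $H_{p},H_{p-n}$. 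These corrections, combined with the first-moment term, are exactly what upgrades the symmetric square $2[H_{n}(p)+H_{n}(-p)]^{2}$ into the asymmetric product $2[H_{p+n}+H_{p-n}-2H_{p}][H_{p+n}+H_{p-n}-\tfrac{2n}{p(p-n)}]$ and leaves behind the surviving $2H_{p}^{2}$, $-\tfrac{4n}{p(p-n)}H_{p}$, and $-\tfrac{4n}{p^{2}(p-n)}$ terms.

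The main obstacle is this final bookkeeping: unlike Corollary~\ref{corl-d}, where the $H_{p}^{\langle2\rangle}$ pieces cancel and no $H_{p}$-dependence survives, here the quadratic and linear $H_{p}$-contributions coming from the square, from the first moment, and from the $-H_{p}^{2}$ term do \emph{not} cancel but must be collected and shown to coalesce into the stated form. A convenient internal check is to isolate the coefficients of $H_{p}^{2}$ and $H_{p}$: the doubled square contributes $+4$ and the moment corrections $-3$ to the $H_{p}^{2}$ coefficient, giving net $+1$ (matching the $2H_{p}^{2}$ inside the brace after the overall $\tfrac12$), while the $H_{p}$-linear parts from the square, the first moment and the expanded product combine to the common value carried by $-\tfrac{4n}{p(p-n)}H_{p}$ plus the product's linear term. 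Equivalently, since Theorem~\ref{thm-d} was itself constructed as the difference of the twice-differentiated \eqref{whipple-h} and Theorem~\ref{thm-a}, one may present Corollary~\ref{corl-p} as that same difference evaluated at $x=p$, with the $H_{k}^{\langle2\rangle}$-part supplied verbatim by Corollary~\ref{corl-d}; this framing transparently explains why $A(p,n)$ reappears with a minus sign.
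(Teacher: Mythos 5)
Your proposal is correct and follows exactly the route the paper intends (and leaves unwritten) for Corollary \ref{corl-p}: specialize Theorem \ref{thm-d} at $x=p$ (no limit needed since $p>n$), reindex via $\binm{n}{k}\binm{n+k}{k}\binm{p+n}{n}/\binm{p+k}{k}=\binm{n+k}{k}\binm{p+n}{n-k}$, and supply the zeroth and first moments from \eqref{whipple-h} and its $\mathcal{D}_x$-derivative to pass from $H_k^2(p)$ to $H_{p+k}^2$, just as the written proofs of Corollaries \ref{corl-b} and \ref{corl-d} evaluate their residual sums. I checked your final bookkeeping in the case $p-n\equiv0\pmod 2$ (including the cancellation of the constant $\tfrac{2n^2}{p^2(p-n)^2}+\tfrac{1}{p^2}-\tfrac{1}{(p-n)^2}+\tfrac{n(2p+n)}{p^2(p-n)^2}-\tfrac{4n}{p(p-n)^2}+\tfrac{4n}{p^2(p-n)}=0$ and your $H_p^2$-coefficient count), and it is consistent with the stated identity.
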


\begin{thm} \label{thm-e}
Let $x$ be a complex number. Then
 \bnm
&&\xxqdn\sum_{k=0}^n(-1)^k\binm{n}{k}\frac{\binm{n+k}{k}}{\binm{x+k}{k}}kH_{k}^{2}(x)
=(-1)^n\frac{n(n+1)}{2(1-x)}\frac{\binm{-x+n}{n}}{\binm{x+n}{n}}\\
&&\xxqdn\:\:\times\:\bigg\{H_{n}^{\langle2\rangle}(x)-H_{n}^{\langle2\rangle}(-x)
+2\Big[H_n(x)+H_{n}(-x)\Big]\Big[H_n(x)+H_{n}(-x)-\tfrac{2}{1-x}\Big]\\
&&\xxqdn\:\:-\:\Big[H_n(x)-H_{n}(-x)\Big]\Big[H_n(x)-H_{n}(-x)-2H_{n+1}(\tfrac{x-n-2}{2})-\tfrac{2(x^2-n-n^2)}{xn(n+1)}\Big]\\
&&\xxqdn\:\:-\:\,H_{n}(\tfrac{x-n}{2})\Big[H_{n+1}(\tfrac{x-n-2}{2})+\tfrac{2(x^3-nx^2+n^2+n^3)}{x(x-n)n(n+1)}\Big]
+\tfrac{4}{(1-x)^2}-\tfrac{4(x^2-nx+n+n^2)}{x(x-n)^2(n+1)}\bigg\}.
 \enm
\end{thm}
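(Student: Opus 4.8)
The plan is to follow the template of the proof of Theorem~\ref{thm-d}, with its unweighted base identity \eqref{whipple-h} replaced by a $k$-weighted analogue. First I would set $y=x$ in \eqref{whipple-e}. At $y=x$ the two quotients of half-integer-shifted binomials on the right of \eqref{whipple-e} coincide with those of \eqref{whipple-b} at $y=x$, and both polynomial prefactors $n^2+n+xy-y^2$ and $n^2+n-2x^2+3xy-y^2$ collapse to $n(n+1)$; consequently each of the two rational coefficients becomes exactly $\tfrac{n(n+1)}{1-x}$ times the corresponding coefficient in \eqref{whipple-b}. Factoring out this common multiple and invoking \eqref{whipple-h}, the whole right-hand side reduces to a single term, giving the clean base identity
\[
\sum_{k=0}^n(-1)^k k\binm{n}{k}\frac{\binm{n+k}{k}}{\binm{x+k}{k}}=(-1)^n\frac{n(n+1)}{1-x}\frac{\binm{-x+n}{n}}{\binm{x+n}{n}},
\]
which I denote by $S_1(x)$; crucially, no half-argument binomials survive in it.

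Next I would apply $\mathcal{D}_x^2$ to both sides. On the left only $\binm{x+k}{k}^{-1}$ depends on $x$, and the derivative rules recorded after Lemma~\ref{lemm-a} give $\mathcal{D}_x^2\binm{x+k}{k}^{-1}=\binm{x+k}{k}^{-1}\{H_k^2(x)+H_k^{\langle2\rangle}(x)\}$, so the left-hand side becomes $\sum_{k=0}^n(-1)^k k\binm{n}{k}\frac{\binm{n+k}{k}}{\binm{x+k}{k}}\{H_k^2(x)+H_k^{\langle2\rangle}(x)\}$. On the right I would write $\frac{\binm{-x+n}{n}}{\binm{x+n}{n}}=\prod_{j=1}^n\frac{j-x}{j+x}$ and apply Lemma~\ref{lemm-a} to $\frac{1}{1-x}\prod_{j=1}^n\frac{j-x}{j+x}$, obtaining $\mathcal{D}_x S_1=S_1\big\{\tfrac{1}{1-x}-H_n(x)-H_n(-x)\big\}$ and then, using $\mathcal{D}_x H_n(\pm x)=\mp H_n^{\langle2\rangle}(\pm x)$,
\[
\mathcal{D}_x^2 S_1=S_1\Big\{\big(\tfrac{1}{1-x}-H_n(x)-H_n(-x)\big)^2+\tfrac{1}{(1-x)^2}+H_n^{\langle2\rangle}(x)-H_n^{\langle2\rangle}(-x)\Big\}.
\]

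Finally I would subtract Theorem~\ref{thm-b}. The differentiated base identity equates $\sum(-1)^k k\binm{n}{k}\frac{\binm{n+k}{k}}{\binm{x+k}{k}}\{H_k^2(x)+H_k^{\langle2\rangle}(x)\}$ to $\mathcal{D}_x^2 S_1$, so removing the $kH_k^{\langle2\rangle}(x)$-sum already evaluated in Theorem~\ref{thm-b} isolates the desired $kH_k^2(x)$-sum. Writing both $\mathcal{D}_x^2 S_1$ and the output of Theorem~\ref{thm-b} with the common prefactor $(-1)^n\frac{n(n+1)}{2(1-x)}\frac{\binm{-x+n}{n}}{\binm{x+n}{n}}$ (note that $S_1$ is exactly twice this prefactor), the coefficient of $H_n^{\langle2\rangle}(x)-H_n^{\langle2\rangle}(-x)$ falls from $2$ to $1$; the half-argument numbers $H_n(\tfrac{x-n}{2})$, $H_{n+1}(\tfrac{x-n-2}{2})$ and the $[H_n(x)-H_n(-x)]$-product inherited from Theorem~\ref{thm-b} reappear with reversed sign; and the genuinely new pieces $2[H_n(x)+H_n(-x)]\big[H_n(x)+H_n(-x)-\tfrac{2}{1-x}\big]$ together with $\tfrac{4}{(1-x)^2}$ (from expanding the square) survive, reproducing exactly the bracketed expression in the statement.

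The main obstacle is the collapse in the first step: one must notice that, although \eqref{whipple-e} at $y=x$ looks like a genuine two-term sum of half-integer-shifted binomial quotients, its two prefactors are the \emph{same} multiple $\tfrac{n(n+1)}{1-x}$ of the prefactors in \eqref{whipple-b}, so that the already-proved one-term evaluation \eqref{whipple-h} applies verbatim and $S_1$ is half-argument-free. With this single-term base identity in hand the remaining steps are routine; the only real care is in tracking the signs of the rational pieces $\tfrac{1}{1-x}$ and $\tfrac{1}{(1-x)^2}$ through the second derivative and matching them against the $\tfrac{4(x^2-nx+n+n^2)}{x(x-n)^2(n+1)}$-type term inherited from Theorem~\ref{thm-b}.
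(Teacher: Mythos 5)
Your proposal is correct and follows essentially the same route as the paper's proof: specialize \eqref{whipple-e} at $y=x$ to get the base identity \eqref{whipple-i}, apply $\mathcal{D}_x^2$ to both sides, and subtract Theorem \ref{thm-b}; your form of the second derivative agrees with the paper's via $H_{n-1}(1-x)=H_n(-x)-\tfrac{1}{1-x}$ and $H_{n-1}^{\langle2\rangle}(1-x)=H_n^{\langle2\rangle}(-x)-\tfrac{1}{(1-x)^2}$. In fact you supply a detail the paper glosses over, namely why the two-term right-hand side of \eqref{whipple-e} collapses at $y=x$ (both polynomial prefactors reduce to $n(n+1)$, so \eqref{whipple-h} applies), and you correctly use $\mathcal{D}_x$ where the paper's proof misprints $\mathcal{D}_y$.
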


\begin{proof}
Set $y=x$ in \eqref{whipple-e} to obtain
 \bmn\label{whipple-i}
\sum_{k=0}^n(-1)^kk\binm{n}{k}\frac{\binm{n+k}{k}}{\binm{x+k}{k}}
=(-1)^n\frac{n(n+1)}{1-x}\frac{\binm{-x+n}{n}}{\binm{x+n}{n}}.
 \emn
 Applying the derivative operator $\mathcal{D}_y^2$ to both sides of
 it, we get
\bnm
&&\xqdn\sum_{k=0}^n(-1)^k\binm{n}{k}\frac{\binm{n+k}{k}}{\binm{x+k}{k}}
 k\Big\{H_{k}^{2}(x)+H_{k}^{\langle2\rangle}(x)\Big\}\\
&&\xqdn\:=\:(-1)^n\frac{n(n+1)}{1-x}\frac{\binm{-x+n}{n}}{\binm{x+n}{n}}
 \Big\{\big[H_{n}(x)+H_{n-1}(1-x)\big]^2+H_{n}^{\langle2\rangle}(x)-H_{n-1}^{\langle2\rangle}(1-x)\Big\}.
 \enm
The difference of the last equation and Theorem \ref{thm-b} offers
Theorem \ref{thm-e}.
\end{proof}

When $x\to p$, where $p$ a nonnegative integer, we can deduce the
following four corollaries from Theorem \ref{thm-e}.

\begin{corl}[Harmonic number identity] \label{corl-r}
\bnm
 &&\sum_{k=0}^n(-1)^k\binm{n}{k}\binm{n+k}{k}kH_{k}^2=(-1)^nn(n+1)\\
 &&\:\times\:\begin{cases}
 4H_{n}^2-4H_n-2H_{n}^{\langle2\rangle}+H_{\frac{n}{2}}^{\langle2\rangle}+2,&n=0\,(\qqdn\mod2);\\[2mm]
  4H_{n}^2-4H_n-2H_{n+1}^{\langle2\rangle}+H_{\frac{n+1}{2}}^{\langle2\rangle}+\tfrac{2(n^3+2n^2+n+1)}{n(n+1)^2},&n=1\,(\qqdn\mod2).
\end{cases}
 \enm
\end{corl}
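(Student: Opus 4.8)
The plan is to realize Corollary \ref{corl-r} as the specialization $x\to 0$ of Theorem \ref{thm-e}, but instead of forcing this limit through the full (and singular) right-hand side of the theorem, I would start from the intermediate identity that already appears in the proof of Theorem \ref{thm-e}, namely
\begin{align*}
&\sum_{k=0}^n(-1)^k\binom{n}{k}\frac{\binom{n+k}{k}}{\binom{x+k}{k}}\,k\big\{H_k^{2}(x)+H_k^{\langle2\rangle}(x)\big\}\\
&\quad=(-1)^n\frac{n(n+1)}{1-x}\frac{\binom{-x+n}{n}}{\binom{x+n}{n}}\big\{[H_n(x)+H_{n-1}(1-x)]^2+H_n^{\langle2\rangle}(x)-H_{n-1}^{\langle2\rangle}(1-x)\big\}.
\end{align*}
The advantage of this form is that its right-hand side involves only $H_{n-1}(1-x)$ and $H_{n-1}^{\langle2\rangle}(1-x)$, both analytic at $x=0$; the factor $H_n(\tfrac{x-n}{2})$, which is the source of the parity split and the L'Hospital work in Corollaries \ref{corl-a} and \ref{corl-e}, is absent here.

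First I would let $x\to0$. Using $\binom{x+k}{k}\to1$, $\binom{-x+n}{n}/\binom{x+n}{n}\to1$, and the elementary evaluations $H_{n-1}(1)=H_n-1$ and $H_{n-1}^{\langle2\rangle}(1)=H_n^{\langle2\rangle}-1$, the displayed identity collapses without any limiting subtlety to
\begin{align*}
\sum_{k=0}^n(-1)^k\binom{n}{k}\binom{n+k}{k}\,k\big\{H_k^{2}+H_k^{\langle2\rangle}\big\}=(-1)^n n(n+1)\big\{(2H_n-1)^2+1\big\}.
\end{align*}

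Next I would isolate the target sum by subtracting off $\sum_k(-1)^k\binom{n}{k}\binom{n+k}{k}kH_k^{\langle2\rangle}$, which is precisely Corollary \ref{corl-e}. For even $n$ this subtraction is immediate: expanding $(2H_n-1)^2=4H_n^2-4H_n+1$ yields $4H_n^2-4H_n-2H_n^{\langle2\rangle}+H_{n/2}^{\langle2\rangle}+2$, matching the stated result. For odd $n$ the same subtraction produces $4H_n^2-4H_n-2H_n^{\langle2\rangle}+H_{(n-1)/2}^{\langle2\rangle}+2+\tfrac{2}{n(n+1)}$, which must then be rewritten into the form displayed in the corollary.

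The main obstacle is this last, purely algebraic reconciliation in the odd case. I would invoke the index-shift relations $H_{n+1}^{\langle2\rangle}=H_n^{\langle2\rangle}+\tfrac1{(n+1)^2}$ and $H_{(n+1)/2}^{\langle2\rangle}=H_{(n-1)/2}^{\langle2\rangle}+\tfrac{4}{(n+1)^2}$ (the latter legitimate since $\tfrac{n+1}{2}\in\mathbb{N}$ for odd $n$) to pass from $\{H_n^{\langle2\rangle},H_{(n-1)/2}^{\langle2\rangle}\}$ to $\{H_{n+1}^{\langle2\rangle},H_{(n+1)/2}^{\langle2\rangle}\}$, and then verify the identity of rational parts $2+\tfrac{2}{n(n+1)}=\tfrac{2(n^2+n+1)}{n(n+1)}=\tfrac{2(n+1)(n^2+n+1)}{n(n+1)^2}=\tfrac{2}{(n+1)^2}+\tfrac{2(n^3+2n^2+n+1)}{n(n+1)^2}$, the final constant being exactly the one in the corollary after the shift. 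As a consistency check, one could instead push $x\to0$ directly through Theorem \ref{thm-e}; but there the term $H_n(\tfrac{x-n}{2})$ is singular at $x=0$ when $n$ is even (the summand $i=n/2$ blows up), forcing a parity split and an explicit L'Hospital evaluation of the $1/x$ pieces. The route above avoids all of that by borrowing the already established Corollary \ref{corl-e}.
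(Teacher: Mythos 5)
Your proposal is correct, but it takes a genuinely different route from the paper's. The paper offers no explicit proof of Corollary~\ref{corl-r}: it is declared to follow from Theorem~\ref{thm-e} by letting $x\to p$ with $p=0$, i.e.\ by pushing the limit through the theorem's full right-hand side --- exactly the computation you flag as singular, since for even $n$ the term $H_n(\tfrac{x-n}{2})$ has a pole at $x=0$ from the summand $i=n/2$, and the rational pieces such as $\tfrac{2(x^2-n-n^2)}{xn(n+1)}$ carry explicit $1/x$ poles, so the intended argument needs a parity split plus L'H\^{o}spital cancellations in complete analogy with the paper's proof of Corollary~\ref{corl-a} from Theorem~\ref{thm-a}. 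Your reorganization --- evaluate at $x=0$ the intermediate identity obtained by applying $\mathcal{D}_x^2$ to \eqref{whipple-i} (the display in the proof of Theorem~\ref{thm-e}, where the paper's ``$\mathcal{D}_y^2$'' is a typo for $\mathcal{D}_x^2$), where every ingredient $\tfrac{1}{1-x}$, $H_{n-1}(1-x)$, $H_{n-1}^{\langle2\rangle}(1-x)$ and the binomial ratio is regular, and only then subtract the already-established Corollary~\ref{corl-e} --- is the same decomposition in a different order: Theorem~\ref{thm-e} is itself that intermediate identity minus Theorem~\ref{thm-b}, and Corollary~\ref{corl-e} is the $x\to0$ limit of Theorem~\ref{thm-b}, so commuting the limit with the subtraction eliminates all singular terms and all L'H\^{o}spital work from this corollary, the only limiting analysis being the one already paid for in Corollary~\ref{corl-e}. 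I checked the details: $H_{n-1}(1)=H_n-1$ and $H_{n-1}^{\langle2\rangle}(1)=H_n^{\langle2\rangle}-1$ do give $\sum_{k=0}^n(-1)^k\binom{n}{k}\binom{n+k}{k}k\{H_k^2+H_k^{\langle2\rangle}\}=(-1)^n n(n+1)\{(2H_n-1)^2+1\}$; the even case then matches immediately; and in the odd case the shifts $H_{n+1}^{\langle2\rangle}=H_n^{\langle2\rangle}+\tfrac{1}{(n+1)^2}$, $H_{(n+1)/2}^{\langle2\rangle}=H_{(n-1)/2}^{\langle2\rangle}+\tfrac{4}{(n+1)^2}$ together with $2+\tfrac{2}{n(n+1)}=\tfrac{2}{(n+1)^2}+\tfrac{2(n^3+2n^2+n+1)}{n(n+1)^2}$ (both sides equal $\tfrac{2(n^2+n+1)}{n(n+1)}$) reproduce the stated form; spot checks at $n=1$ (both sides $-2$) and $n=2$ (both sides $21$) confirm. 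The trade-off: your argument is shorter and avoids all delicate pole cancellations, but it is special to $p=0$ --- at $x=p>0$ the right-hand side of the intermediate identity becomes indeterminate (the zero of $\binom{n-x}{n}$ against poles of $H_{n-1}(1-x)$ and $\tfrac{1}{1-x}$), so the paper's single limit of Theorem~\ref{thm-e} still earns its keep by delivering Corollaries~\ref{corl-r}--\ref{corl-u} uniformly for every nonnegative integer $p$.
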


\begin{corl} \label{corl-s}
Let $p$ be a positive integer satisfying $0<p\leq n$. Then
 \bnm
 &&\sum_{k=0}^n(-1)^k\binm{n+k}{k}\binm{p+n}{n-k}kH_{p+k}^2=\frac{n(n+1)}{p(p-1)}\frac{(-1)^{n-p}}{\binm{n}{p}}\\
 &&\:\times\:\begin{cases}
 H_{n+p}+3H_{n-p}-2H_{p-2}+H_{\frac{n+p}{2}}-H_{\frac{n-p}{2}}+\frac{p}{n(n+1)},&n-p=0\,(\qqdn\mod2);\\[2mm]
  H_{n+p}+3H_{n-p}-2H_{p-2}+H_{\frac{n+p-1}{2}}-H_{\frac{n-p-1}{2}}-\frac{p}{n(n+1)},&n-p=1\,(\qqdn\mod2).
\end{cases}
 \enm
\end{corl}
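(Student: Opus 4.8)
The plan is to obtain Corollary \ref{corl-s} as the limiting case $x\to p$ of Theorem \ref{thm-e}, exactly parallel to the way the earlier $0<p\le n$ corollaries were extracted from Theorems \ref{thm-a}--\ref{thm-c}. On the left-hand side of Theorem \ref{thm-e} I would first rewrite the coefficient at $x=p$ via $\binom{n}{k}\binom{n+k}{k}/\binom{p+k}{k}=\binom{n+k}{k}\binom{p+n}{n-k}/\binom{p+n}{n}$ together with $H_k(p)=H_{p+k}-H_p$, so that the left-hand side tends to $\frac{1}{\binom{p+n}{n}}\sum_{k=0}^n(-1)^k\binom{n+k}{k}\binom{p+n}{n-k}\,k\,(H_{p+k}-H_p)^2$. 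Once the right-hand limit is known it then only remains to disentangle $(H_{p+k}-H_p)^2=H_{p+k}^2-2H_pH_{p+k}+H_p^2$.

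The genuine work is the right-hand side, which at $x=p$ is an indeterminate $0\times\infty$. Here $\binom{-x+n}{n}=\frac{(1-x)(2-x)\cdots(n-x)}{n!}$ has a simple zero at $x=p$ (the factor $p-x$), while the brace carries poles there: $H_n(-x)$ and $H_n^{\langle2\rangle}(-x)$ are singular through their $m=p$ terms $\frac{1}{p-x}$ and $\frac{1}{(p-x)^2}$, and \emph{when $n-p$ is even} the half-argument quantities $H_n(\tfrac{x-n}{2})$ and $H_{n+1}(\tfrac{x-n-2}{2})$ acquire simple poles as well, their singular index $\tfrac{n-p}{2}$ then being an integer. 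I would first verify that all $\frac{1}{(p-x)^2}$ contributions inside the brace cancel: the term $+H_n(-x)^2$ arising from $2[H_n(x)+H_n(-x)]^2-[H_n(x)-H_n(-x)]^2$ is annihilated by $-H_n^{\langle2\rangle}(-x)$, and in the even case the further double poles coming from products of the singular half-argument factors with $H_n(-x)$ must cancel as well; only then does multiplication by the simple zero leave a finite limit, which I extract by L'H\^{o}spital to obtain a closed value $R_p$. The parity split in the statement is precisely the dichotomy between the even case (extra poles present) and the odd case (half-argument terms regular). It may be organizationally cleaner to run this limit on the simpler intermediate identity appearing in the proof of Theorem \ref{thm-e}, namely $\sum(-1)^kk\binom{n}{k}\frac{\binom{n+k}{k}}{\binom{x+k}{k}}\{H_k^2(x)+H_k^{\langle2\rangle}(x)\}=(-1)^n\frac{n(n+1)}{1-x}\frac{\binom{-x+n}{n}}{\binom{x+n}{n}}\{[H_n(x)+H_{n-1}(1-x)]^2+H_n^{\langle2\rangle}(x)-H_{n-1}^{\langle2\rangle}(1-x)\}$, whose brace exhibits the same double-pole cancellation with fewer terms; the $H^{\langle2\rangle}$-contribution it carries can then be removed using Corollary \ref{corl-f}.

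Finally I would settle the re-expansion. The plain sum $\sum_{k=0}^n(-1)^k\binom{n+k}{k}\binom{p+n}{n-k}k$ vanishes, since by \eqref{whipple-i} it equals $\binom{p+n}{n}(-1)^n\frac{n(n+1)}{1-p}\frac{\binom{n-p}{n}}{\binom{p+n}{n}}$ with $\binom{n-p}{n}=0$ for $0<p\le n$; this kills the $H_p^2$ term. The cross term needs the first-order companion $T=\sum_{k=0}^n(-1)^k\binom{n+k}{k}\binom{p+n}{n-k}kH_{p+k}$, which I would evaluate by differentiating \eqref{whipple-i} once with respect to $x$ — this brings down the factor $-H_k(x)$ on the left — and letting $x\to p$; because $\binom{-x+n}{n}$ vanishes at $x=p$, only the term in which this factor is differentiated survives, so $T$ is read off cleanly with no further cancellation. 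Assembling $S=\binom{p+n}{n}R_p+2H_pT$, where $S$ denotes the target sum, and simplifying within each parity class yields the stated formula.

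The hard part is the pole bookkeeping of the second paragraph: checking that the $1/(p-x)^2$ contributions genuinely cancel inside the brace and then correctly pairing the surviving simple pole with the simple zero of $\binom{-x+n}{n}$ to produce a finite and fully simplified residue, the even case being the more delicate because two additional harmonic factors contribute poles there. A secondary nuisance is the degenerate point $p=1$, where the explicit $\frac{1}{1-x}$ in Theorem \ref{thm-e} collides with the zero of $\binom{-x+n}{n}$ (and where the plain sum $\sum(-1)^k\binom{n+k}{k}\binom{p+n}{n-k}k$ no longer vanishes); since the stated right-hand side carries $p(p-1)$ in the denominator, the clean formula is really meant for $p\ge2$, and $p=1$ should be treated separately.
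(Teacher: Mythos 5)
Your proposal takes essentially the same route as the paper: Corollary \ref{corl-s} is obtained there precisely as the limit $x\to p$ of Theorem \ref{thm-e} (stated without printed details, the pattern being that of the explicit proofs of Corollaries \ref{corl-b} and \ref{corl-d}), with the conversion $\binm{n}{k}/\binm{p+k}{k}=\binm{p+n}{n-k}/\binm{p+n}{n}$, $H_k(p)=H_{p+k}-H_p$, and the residual plain and cross sums disposed of via \eqref{whipple-i} and its $x$-derivative exactly as you describe. Your double-pole bookkeeping in the brace and your caveat that the displayed formula degenerates at $p=1$ (the factor $p(p-1)$ and the term $H_{p-2}$) are correct refinements of steps the paper leaves implicit.
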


\begin{corl}[Harmonic number identity] \label{corl-t}
 \bnm
 \qquad\sum_{k=0}^n(-1)^k\binm{n}{k}kH_{n+k}^2=\frac{n+1}{n-1}\frac{1}{\binm{2n}{n}}\Big\{H_{2n}-H_{n}+\tfrac{5n^2+n-2}{n^3-n}\Big\}.
 \enm
\end{corl}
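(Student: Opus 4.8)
The plan is to specialize Theorem~\ref{thm-e} at $x=n$, in the same spirit in which Corollary~\ref{corl-g} arises from Theorem~\ref{thm-b}. At $x=n$ the quotient $\binm{n+k}{k}/\binm{x+k}{k}$ collapses to $1$, so the left-hand side of Theorem~\ref{thm-e} becomes $\sum_{k=0}^n(-1)^k\binm{n}{k}kH_k^2(n)$. Since $H_k(n)=\sum_{i=1}^k\frac{1}{n+i}=H_{n+k}-H_n$, I would expand $H_k^2(n)=(H_{n+k}-H_n)^2=H_{n+k}^2-2H_nH_{n+k}+H_n^2$, so that the target sum $\sum_{k=0}^n(-1)^k\binm{n}{k}kH_{n+k}^2$ emerges together with the two auxiliary sums $\sum_{k=0}^n(-1)^k\binm{n}{k}kH_{n+k}$ and $\sum_{k=0}^n(-1)^k\binm{n}{k}k$.

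The sum $\sum_{k=0}^n(-1)^k\binm{n}{k}k$ is the $n$-th finite difference of a degree-one polynomial and hence vanishes for $n\ge2$, so the $H_n^2$ contribution disappears; this is why $n\ge2$ (reflected in the factor $\frac{1}{n-1}$) is needed. The genuinely new ingredient, absent from the $H^{\langle2\rangle}$ analogue Corollary~\ref{corl-g}, is the cross-term sum $S:=\sum_{k=0}^n(-1)^k\binm{n}{k}kH_{n+k}$. I would obtain $S$ by applying $\mathcal{D}_x$ once to \eqref{whipple-i}: its left-hand side becomes $\sum_{k=0}^n(-1)^kk\binm{n}{k}\frac{\binm{n+k}{k}}{\binm{x+k}{k}}\{-H_k(x)\}$, which equals $-S$ at $x=n$ once the constant $H_n$ drops out, while on the right-hand side $\binm{-x+n}{n}$ vanishes to first order at $x=n$, so only the term in which $\mathcal{D}_x$ differentiates this factor survives; evaluating the remaining factors there yields $S=\dfrac{n+1}{(n-1)\binm{2n}{n}}$.

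The main obstacle is evaluating the right-hand side of Theorem~\ref{thm-e} in the limit $x\to n$. The prefactor $\binm{-x+n}{n}$ provides only a simple zero, whereas the braces carry poles at $x=n$: $H_n^{\langle2\rangle}(-x)$ has a double pole, while $H_n(-x)$ and $H_{n+1}(\tfrac{x-n-2}{2})$ each contribute a simple pole, the half-argument $H_n(\tfrac{x-n}{2})$ being regular with limit $H_n$. The limit is therefore a genuine $0\cdot\infty$ indeterminacy, and the computation must be arranged — exactly as the grouping of terms inside Theorem~\ref{thm-e} is designed to permit — so that the residual simple and double poles cancel against the simple zero and against one another. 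I would expand the singular harmonic numbers as Laurent series in $x-n$ to the required order (equivalently, invoke L'H\^{o}spital's rule as in the earlier limits), collect the finite part, and so read off the closed value of $\sum_{k=0}^n(-1)^k\binm{n}{k}kH_k^2(n)$. I expect this pole-cancellation bookkeeping to be the most delicate step.

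Finally I would assemble the pieces: adding $2H_nS$ to the limit just computed produces $\sum_{k=0}^n(-1)^k\binm{n}{k}kH_{n+k}^2$. The effect of the $2H_nS$ term is to adjust the coefficient of $H_n$, after which routine simplification collapses everything to $\frac{n+1}{n-1}\frac{1}{\binm{2n}{n}}\{H_{2n}-H_n+\frac{5n^2+n-2}{n^3-n}\}$. No parity distinction intervenes here, unlike in Corollaries~\ref{corl-r}--\ref{corl-s}: at $x=n$ the arguments $\tfrac{x-n}{2}$ and $\tfrac{x-n-2}{2}$ tend to integers, so the half-integer harmonic numbers that force the even/odd split at $x\to0$ never appear, and a single formula results.
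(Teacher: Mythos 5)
Your proposal is correct and takes essentially the same route as the paper, which obtains this identity precisely by letting $x\to n$ in Theorem \ref{thm-e} (equivalently, taking $p=n$ in Corollary \ref{corl-s}), with the auxiliary cross-term sum handled just as you do by differentiating \eqref{whipple-i} once so that only the term hitting the simple zero of $\binm{-x+n}{n}$ survives. Your intermediate value $S=\sum_{k=0}^n(-1)^k\binm{n}{k}kH_{n+k}=\frac{n+1}{(n-1)\binm{2n}{n}}$, the vanishing of $\sum_{k=0}^n(-1)^k\binm{n}{k}k$ for $n\ge2$, and the pole-cancellation analysis of the $x\to n$ limit all check out (for instance, at $n=2$ both sides of the stated identity equal $\tfrac{47}{24}$).
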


\begin{corl} \label{corl-u}
Let $p$ be a positive integer with $p>n$. Then
 \bnm
 &&\xxqdn\sum_{k=0}^n(-1)^k\binm{n+k}{k}\binm{p+n}{n-k}kH_{p+k}^2=\frac{n+n^2}{2(1-p)}\binm{p-1}{n}\\
&&\xxqdn\:\times\:\bigg\{\tfrac{2}{p^2(1-p)^2}\Big[1-2p+p(1-p)(H_{p+n}+H_{p-n-1}-H_p)\Big]^2\\
&&\xxqdn\:+\:H_{p+n}^{\langle2\rangle}+H_{p-n-1}^{\langle2\rangle}-2H_{p-2}^{\langle2\rangle}-B(p,n)\bigg\},
 \enm
where the symbol $B(p,n)$ on the right hand side can be seen in
Corollary \ref{corl-h}.
\end{corl}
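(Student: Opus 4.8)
The plan is to obtain Corollary~\ref{corl-u} as the specialization $x\to p$ of Theorem~\ref{thm-e}, recast in terms of the classical harmonic numbers $H_{p+k}$. Two elementary reductions drive the passage: the binomial identity
\[
\binm{n}{k}\frac{\binm{n+k}{k}}{\binm{p+k}{k}}=\frac{1}{\binm{p+n}{n}}\binm{n+k}{k}\binm{p+n}{n-k}\qquad(0\le k\le n),
\]
together with the shifts $H_{p+k}=H_p+H_k(p)$ and $H_{p+k}^{\langle2\rangle}=H_p^{\langle2\rangle}+H_k^{\langle2\rangle}(p)$. Multiplying the $x=p$ case of Theorem~\ref{thm-e} by $\binm{p+n}{n}$ and using $(-1)^n\binm{-p+n}{n}=\binm{p-1}{n}$ reproduces the prefactor $\tfrac{n+n^2}{2(1-p)}\binm{p-1}{n}$ exactly, so all of the content sits inside the braces.

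First I would expand $H_{p+k}^2=H_k^2(p)+2H_pH_k(p)+H_p^2$, which gives
\[
\sum_{k=0}^n(-1)^k\binm{n+k}{k}\binm{p+n}{n-k}kH_{p+k}^2=\binm{p+n}{n}\big(\text{Theorem~\ref{thm-e} at }x=p\big)+2H_p\,S_1+H_p^2\,S_0,
\]
with $S_0=\sum_k(-1)^k\binm{n+k}{k}\binm{p+n}{n-k}k$ and $S_1=\sum_k(-1)^k\binm{n+k}{k}\binm{p+n}{n-k}kH_k(p)$. Here $S_0$ is the $x=p$ instance of \eqref{whipple-i}, while $S_1$ follows from applying $\mathcal{D}_x$ once to \eqref{whipple-i} — the logarithmic rule of Lemma~\ref{lemm-a} sends $1/\binm{x+k}{k}$ to $-H_k(x)/\binm{x+k}{k}$ — and then setting $x=p$; both are closed forms in $\binm{p-1}{n}$, $\binm{p-2}{n-1}$ and the quantities $H_n(p)$, $H_{n-1}(1-p)$.

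Next I would reduce the braced part of Theorem~\ref{thm-e}. The integer-argument terms collapse via $H_n(p)=H_{p+n}-H_p$, $H_n(-p)=H_{p-n-1}-H_{p-1}$ and $H_{n-1}(1-p)=H_{p-n-1}-H_{p-2}$, whereas the half-integer arguments $H_{n+1}(\tfrac{p-n-2}{2})$ and $H_n(\tfrac{p-n}{2})$ must be rewritten through classical $H$'s; this is precisely the step that forces the split $p-n\equiv0$ versus $p-n\equiv1\pmod2$ and produces the two branches of $B(p,n)$. Rather than recomputing the order-two contributions, I would quote Corollary~\ref{corl-h}: because Theorem~\ref{thm-e} is the relation obtained by applying $\mathcal{D}_x^2$ to \eqref{whipple-i} minus Theorem~\ref{thm-b}, and Corollary~\ref{corl-h} is the $x=p$ form of Theorem~\ref{thm-b}, the $H^{\langle2\rangle}$-pieces package into $H_{p+n}^{\langle2\rangle}+H_{p-n-1}^{\langle2\rangle}-2H_{p-2}^{\langle2\rangle}-B(p,n)$, with $B(p,n)$ entering with the opposite sign to its appearance in Corollary~\ref{corl-h}.

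The hard part will be the final bookkeeping. After inserting $S_0$, $S_1$ and the reduced arguments one must check that the residual $H_p$- and $H_p^{\langle2\rangle}$-dependence reassembles into the perfect square $\tfrac{2}{p^2(1-p)^2}[1-2p+p(1-p)(H_{p+n}+H_{p-n-1}-H_p)]^2$. Concretely, the linear form $\tfrac1p+\tfrac1{p-1}+H_{p+n}+H_{p-n-1}-H_p$ arising from $H_n(p)+H_{n-1}(1-p)$ together with the correction from $2H_p\,S_1$ equals $\tfrac{1}{p(1-p)}$ times the bracket $1-2p+p(1-p)(H_{p+n}+H_{p-n-1}-H_p)$, so its square, doubled, is exactly that term; confirming that every stray rational piece cancels within each parity class is where the genuine labour lies. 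The remaining manipulations are of the same routine type already exercised in Corollaries~\ref{corl-f} and~\ref{corl-h}.
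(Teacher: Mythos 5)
Your proposal is correct and takes essentially the same route as the paper: the paper obtains Corollary \ref{corl-u} precisely by specializing Theorem \ref{thm-e} at $x\to p$ (for $p>n$ this is direct substitution) and converting $H_k(p)$, $H_n(-p)$, $H_{n-1}(1-p)$ and the half-integer arguments to classical harmonic numbers, with the residual sums you call $S_0$ and $S_1$ evaluated through \eqref{whipple-i} and its $x$-derivative --- the same derivative-operator device the paper applies explicitly in its Section 2 proofs. Your structural claims all check out, in particular that $B(p,n)$ enters with the opposite sign to Corollary \ref{corl-h}, that the correction $H_p^{\langle2\rangle}S_0$ turns $-2H_p^{\langle2\rangle}$ into the stated $-2H_{p-2}^{\langle2\rangle}$, and that $\tfrac{1-2p}{p(1-p)}=\tfrac1p+\tfrac1{p-1}$ makes the linear form from $H_n(p)+H_{n-1}(1-p)$ plus the $2H_pS_1$ correction reassemble into the perfect square.
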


\begin{thm} \label{thm-f}
Let $x$ be a complex number. Then
 \bnm
&&\xqdn\sum_{k=0}^n(-1)^k\binm{n}{k}\frac{\binm{n+k}{k}}{\binm{x+k}{k}}k^2H_{k}^{2}(x)
=(-1)^n\frac{(n+n^2)(n+n^2-x)}{2(1-x)(2-x)}\frac{\binm{-x+n}{n}}{\binm{x+n}{n}}\\
&&\xqdn\:\:\times\:\bigg\{H_{n}^{\langle2\rangle}(x)\!-\!H_{n}^{\langle2\rangle}(-x)
+2\Big[H_n(x)+H_{n-2}(2-x)+\tfrac{2}{n+n^2-x}\Big]
\\&&\xqdn\:\:\times\:\Big[H_n(x)+H_{n-2}(2-x)\Big]-\Big[H_n(x)-H_{n}(-x)-2H_{n+1}(\tfrac{x-n-2}{2})+U_n(x)\Big]\\
&&\xqdn\:\:\times\:\Big[H_n(x)-H_{n}(-x)\Big]-H_{n}(\tfrac{x-n}{2})\Big[H_{n}(\tfrac{x-n}{2})-V_n(x)\Big]
+\tfrac{2(2x^2-6x+5)}{(1-x)^2(2-x)^2}-W_n(x)\bigg\},
 \enm
where the expressions $Un(x)$, $V_n(x)$ and $W_n(x)$ on the right
hand side have appeared in Theorem \ref{thm-c}.
\end{thm}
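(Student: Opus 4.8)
The plan is to mirror the proofs of Theorems \ref{thm-d} and \ref{thm-e}, which reduced the squared-harmonic sums to the already-established second-order sums by a single application of the second derivative operator. The three ingredients are a closed form for the undifferentiated $k^2$-sum, the differentiation rule for $1/\binm{x+k}{k}$, and Theorem \ref{thm-c}.

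First I would specialize $y=x$ in the $k^2$-evaluation \eqref{whipple-g}. At $y=x$ one has $2x-y=x$, so $\binm{y-2x+n}{n}\to\binm{-x+n}{n}$ and $\binm{y+n}{n}\to\binm{x+n}{n}$, while the two quadratic-in-$y$ numerators inside $\Psi_n(x,y)$ both collapse to $(n+n^2)(n+n^2-x)$; after the half-integer binomials are reduced by the standard duplication relations, the two summands of $\Psi_n(x,x)$ combine into the single closed form
\[\sum_{k=0}^n(-1)^k k^2\binm{n}{k}\frac{\binm{n+k}{k}}{\binm{x+k}{k}}=(-1)^n\frac{(n+n^2)(n+n^2-x)}{(1-x)(2-x)}\frac{\binm{-x+n}{n}}{\binm{x+n}{n}},\]
the exact analogue of \eqref{whipple-h} and \eqref{whipple-i}. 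Call the right-hand side $R(x)$.

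Next I would apply $\mathcal{D}_x^2$ to this identity. Since $\mathcal{D}_x\binm{x+k}{k}^{-1}=-\binm{x+k}{k}^{-1}H_k(x)$, a second differentiation gives $\mathcal{D}_x^2\binm{x+k}{k}^{-1}=\binm{x+k}{k}^{-1}\{H_k^2(x)+H_k^{\langle2\rangle}(x)\}$, so the left side becomes $\sum_k(-1)^k k^2\binm{n}{k}\binm{n+k}{k}\binm{x+k}{k}^{-1}\{H_k^2(x)+H_k^{\langle2\rangle}(x)\}$. On the right I would compute $R''$ through its logarithmic derivative $h(x):=\mathcal{D}_x\log R(x)$. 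Using $\frac1{1-x}+\frac1{2-x}-H_n(-x)=-H_{n-2}(2-x)$ and $\frac1{(1-x)^2}+\frac1{(2-x)^2}=H_n^{\langle2\rangle}(-x)-H_{n-2}^{\langle2\rangle}(2-x)$, this yields $h=-[H_n(x)+H_{n-2}(2-x)+\tfrac1{n+n^2-x}]$ and $h'=H_n^{\langle2\rangle}(x)-H_{n-2}^{\langle2\rangle}(2-x)-\tfrac1{(n+n^2-x)^2}$, so that $R''=R(h^2+h')$.

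Finally, subtracting Theorem \ref{thm-c}, which evaluates $\sum_k(-1)^k k^2\binm{n}{k}\binm{n+k}{k}\binm{x+k}{k}^{-1}H_k^{\langle2\rangle}(x)$, from the differentiated identity isolates the $k^2H_k^2(x)$ sum. Because the prefactor of Theorem \ref{thm-c} is exactly $\tfrac12 R(x)$, the bracketed expression of Theorem \ref{thm-f} must equal $2(h^2+h')$ minus the brace of Theorem \ref{thm-c}. The main obstacle is this final reconciliation: one has to expand $2h^2=2[H_n(x)+H_{n-2}(2-x)]^2+\tfrac{4[H_n(x)+H_{n-2}(2-x)]}{n+n^2-x}+\tfrac2{(n+n^2-x)^2}$, check that its term $\tfrac2{(n+n^2-x)^2}$ cancels the $-\tfrac2{(n+n^2-x)^2}$ coming from $2h'$, and rewrite the pieces involving $H_n(-x)$, $H_{n+1}(\tfrac{x-n-2}{2})$ and $H_n(\tfrac{x-n}{2})$ inherited from $U_n$, $V_n$, $W_n$ so that the residual rational part condenses into $\tfrac{2(2x^2-6x+5)}{(1-x)^2(2-x)^2}=2[\tfrac1{(1-x)^2}+\tfrac1{(2-x)^2}]$. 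Once these rational identities are in place the stated formula drops out, the remaining steps being routine algebra.
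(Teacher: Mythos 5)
Your proposal is correct and follows essentially the same route as the paper: specialize $y=x$ in \eqref{whipple-g} to get the closed form with prefactor $(n+n^2)(n+n^2-x)/[(1-x)(2-x)]$, apply $\mathcal{D}_x^2$ (your $R(h^2+h')$ computation reproduces exactly the paper's intermediate identity with the bracket $[H_n(x)+H_{n-2}(2-x)][H_n(x)+H_{n-2}(2-x)+\tfrac{2}{n+n^2-x}]+H_n^{\langle2\rangle}(x)-H_{n-2}^{\langle2\rangle}(2-x)$), and subtract Theorem \ref{thm-c}. Your explicit reconciliation via $H_n^{\langle2\rangle}(-x)-H_{n-2}^{\langle2\rangle}(2-x)=\tfrac{1}{(1-x)^2}+\tfrac{1}{(2-x)^2}$ is exactly the algebra the paper leaves implicit, and you even silently fix the paper's typo of writing $\mathcal{D}_y^2$ where $\mathcal{D}_x^2$ is meant.
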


\begin{proof}
Take $y=x$ in \eqref{whipple-g} to achieve
 \bmn\label{whipple-h}
\sum_{k=0}^n(-1)^kk^2\binm{n}{k}\frac{\binm{n+k}{k}}{\binm{x+k}{k}}
=(-1)^n\frac{(n+n^2)(n+n^2-x)}{2-3x+x^2}\frac{\binm{-x+n}{n}}{\binm{x+n}{n}}.
 \emn
 Applying the derivative operator $\mathcal{D}_y^2$ to both sides of
 it, we attain
\bnm
&&\xqdn\sum_{k=0}^n(-1)^k\binm{n}{k}\frac{\binm{n+k}{k}}{\binm{x+k}{k}}
 k^2\Big\{H_{k}^{2}(x)+H_{k}^{\langle2\rangle}(x)\Big\}
 =(-1)^n\frac{(n+n^2)(n+n^2-x)}{2-3x+x^2}\frac{\binm{-x+n}{n}}{\binm{x+n}{n}}\\
&&\xqdn\:\:\times\:
 \Big\{\big[H_{n}(x)+H_{n-2}(2-x)\big]\big[H_{n}(x)+H_{n-2}(2-x)+\tfrac{2}{n+n^2-x}\big]\\
&&\quad+\:
 H_{n}^{\langle2\rangle}(x)-H_{n-2}^{\langle2\rangle}(2-x)\Big\}.
 \enm
The difference of the last equation and Theorem \ref{thm-c} produces
Theorem \ref{thm-e}.
\end{proof}

When $x\to p$, where $p$ a nonnegative integer, we can derive the
following four corollaries from Theorem \ref{thm-f}.

\begin{corl}[Harmonic number identity] \label{corl-v}
\bnm
 &&\xqdn\sum_{k=0}^n(-1)^k\binm{n}{k}\binm{n+k}{k}k^2H_{k}^2=(-1)^n\frac{n^2(n+1)^2}{2}\\
 &&\xqdn\:\times\:\begin{cases}
 4H_{n}^2-\tfrac{6n^2+6n-4}{n(n+1)}H_n-2H_{n}^{\langle2\rangle}
 +H_{\frac{n}{2}}^{\langle2\rangle}+\tfrac{7n^2+7n-4}{2n(n+1)},&n=0\,(\qqdn\mod2);\\[2mm]
  4H_{n}^2-\tfrac{6n^2+6n-4}{n(n+1)}H_n-2H_{n}^{\langle2\rangle}
 +H_{\frac{n-1}{2}}^{\langle2\rangle}+\tfrac{7n^2(n+1)^2-4}{2n^2(n+1)^2},&n=1\,(\qqdn\mod2).
\end{cases}
 \enm
\end{corl}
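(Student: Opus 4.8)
The plan is to avoid the direct substitution $x\to 0$ in Theorem \ref{thm-f} and instead to exploit the intermediate identity that appears in its proof. Observe first that the left-hand side of Corollary \ref{corl-v} is exactly the value at $x=0$ of the left-hand side of Theorem \ref{thm-f}, since $1/\binom{x+k}{k}\to 1$ and $H_k^2(x)\to H_k^2$ as $x\to 0$. However, the right-hand side of Theorem \ref{thm-f} is awkward to specialize: the terms $U_n(x)$, $V_n(x)$, $W_n(x)$ and the half-argument quantity $H_n(\tfrac{x-n}{2})$ are each singular or parity-sensitive at $x=0$, so a direct limit would force a delicate L'H\^{o}spital computation in which the singular parts must be shown to cancel.

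Instead, I would start from the identity established in the proof of Theorem \ref{thm-f} \emph{before} the difference with Theorem \ref{thm-c} is taken, namely the closed form for
\[\sum_{k=0}^n(-1)^k\binom{n}{k}\frac{\binom{n+k}{k}}{\binom{x+k}{k}}k^2\bigl\{H_k^2(x)+H_k^{\langle 2\rangle}(x)\bigr\}.\]
Its right-hand side involves only $H_n(x)$, $H_{n-2}(2-x)$, $\tfrac{2}{n+n^2-x}$, $H_n^{\langle 2\rangle}(x)$ and $H_{n-2}^{\langle 2\rangle}(2-x)$ together with the prefactor $(-1)^n\frac{(n+n^2)(n+n^2-x)}{2-3x+x^2}\frac{\binom{-x+n}{n}}{\binom{x+n}{n}}$, every one of which is analytic at $x=0$. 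Hence I may simply set $x=0$, with no L'H\^{o}spital rule required. Using the index shifts $H_{n-2}(2)=H_n-\tfrac32$ and $H_{n-2}^{\langle 2\rangle}(2)=H_n^{\langle 2\rangle}-\tfrac54$, the prefactor collapses to $(-1)^n\frac{n^2(n+1)^2}{2}$ and the braced factor to $(2H_n-\tfrac32)\bigl(2H_n-\tfrac32+\tfrac{2}{n(n+1)}\bigr)+\tfrac54$, yielding a closed form for $\sum_{k=0}^n(-1)^k\binom{n}{k}\binom{n+k}{k}k^2\{H_k^2+H_k^{\langle 2\rangle}\}$.

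To finish, I would subtract Corollary \ref{corl-i}, which already evaluates $\sum_{k=0}^n(-1)^k\binom{n}{k}\binom{n+k}{k}k^2H_k^{\langle 2\rangle}$. The parity split of Corollary \ref{corl-v} is then inherited entirely from that of Corollary \ref{corl-i}, since the $x=0$ value of the intermediate identity is parity-independent. Expanding $(2H_n-\tfrac32)(2H_n-\tfrac32+\tfrac{2}{n(n+1)})+\tfrac54=4H_n^2-6H_n+\tfrac72+\tfrac{4H_n}{n(n+1)}-\tfrac{3}{n(n+1)}$ and combining with the negative of the Corollary \ref{corl-i} bracket produces the coefficient $-\frac{6n^2+6n-4}{n(n+1)}$ of $H_n$ together with the constants $\frac{7n^2+7n-4}{2n(n+1)}$ in the even case and $\frac{7n^2(n+1)^2-4}{2n^2(n+1)^2}$ in the odd case.

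The main obstacle is not conceptual but bookkeeping: one must correctly carry out the index shifts defining $H_{n-2}(2)$ and $H_{n-2}^{\langle 2\rangle}(2)$, and track the parity-dependent terms $H_{n/2}^{\langle 2\rangle}$ versus $H_{(n-1)/2}^{\langle 2\rangle}$, which enter only through Corollary \ref{corl-i}. I expect the even and odd constant terms to be where sign and denominator errors are most likely, so I would verify each by clearing denominators and confirming that, once the $H_n$- and $H_n^{\langle 2\rangle}$-contributions have been accounted for, the residual constant numerator reduces to $-4$ in both parity classes.
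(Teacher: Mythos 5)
Your proposal is correct, and it takes a genuinely different route from the paper's. The paper derives Corollary \ref{corl-v} by asserting the limit $x\to p$ (here $p=0$) in Theorem \ref{thm-f} itself, which is the subtract-then-specialize order: Theorem \ref{thm-f} is the $\mathcal{D}_x^2$-identity minus Theorem \ref{thm-c}, and its $x\to 0$ limit would require L'H\^{o}spital-type cancellations among the singular and parity-sensitive terms $U_n(x)$, $V_n(x)$, $W_n(x)$ and $H_n(\tfrac{x-n}{2})$, in the style of the proofs of Corollaries \ref{corl-a} and \ref{corl-b}. You reverse the order — specialize first, subtract second: the intermediate identity from the proof of Theorem \ref{thm-f} is indeed regular at $x=0$, so plugging in $x=0$ needs no limit at all, and subtracting Corollary \ref{corl-i} (which is Theorem \ref{thm-c} in the limit $x\to 0$, already established earlier in the paper) finishes the job. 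The two routes are logically equivalent rearrangements of the same content, but yours confines all singular-limit work to the previously proved Corollary \ref{corl-i} and lets the parity split enter only through it, which is cleaner and less error-prone for this particular specialization; the paper's route buys instead a single uniform statement (Theorem \ref{thm-f}) from which all four $p$-corollaries flow. I checked your arithmetic: the prefactor collapses to $(-1)^n\tfrac{n^2(n+1)^2}{2}$, the shifts $H_{n-2}(2)=H_n-\tfrac32$ and $H_{n-2}^{\langle2\rangle}(2)=H_n^{\langle2\rangle}-\tfrac54$ are right, your expansion $4H_n^2-6H_n+\tfrac72+\tfrac{4H_n}{n(n+1)}-\tfrac{3}{n(n+1)}$ is correct, and subtracting the two parity branches of Corollary \ref{corl-i} reproduces exactly the stated coefficient $-\tfrac{6n^2+6n-4}{n(n+1)}$ of $H_n$ and the constants $\tfrac{7n^2+7n-4}{2n(n+1)}$ and $\tfrac{7n^2(n+1)^2-4}{2n^2(n+1)^2}$ (e.g.\ in the odd case $\tfrac72-\tfrac{3}{n(n+1)}+\tfrac{3n^2+3n-2}{n^2(n+1)^2}=\tfrac{7n^2(n+1)^2-4}{2n^2(n+1)^2}$); numerical checks at $n=1,2$ confirm both sides. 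One pedantic point only: $H_{n-2}(2)=H_n-\tfrac32$ presumes $n\ge 2$; for $n=1$ you need the convention $H_{-1}(y)=-1/y$ (or a direct verification, which gives $-2$ on both sides), and $n=0$ is vacuous — a caveat equally implicit in the paper's own formulas.
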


\begin{corl} \label{corl-w}
Let $p$ be a positive integer satisfying $0<p\leq n$. Then
 \bnm
 &&\xxqdn\qqdn\sum_{k=0}^n(-1)^k\binm{n+k}{k}\binm{p+n}{n-k}k^2H_{p+k}^2=\frac{(n+n^2)(n+n^2-p)}{p(p-1)(p-2)}\\
 &&\xxqdn\qqdn\:\:\times\:\frac{(-1)^{n-p+1}}{\binm{n}{p}}\Big\{ H_{n+p}+3H_{n-p}-2H_{p-3}+E(p,n)\Big\}.
 \enm
where the symbol on the right hand side stands for
 \bnm
 E(p,n)=\begin{cases}
H_{\frac{n+p}{2}}-H_{\frac{n-p}{2}}-\frac{p(1+p-p^2)-(2+p)(n+n^2)}{(n+n^2)(n+n^2-p)},&p-n=0\,(\qqdn\mod2);\\[2mm]
  H_{\frac{n+p-1}{2}}-H_{\frac{n-p-1}{2}}+\frac{p(1+p-p^2)+(2+p)(n+n^2)}{(n+n^2)(n+n^2-p)},&p-n=1\,(\qqdn\mod2).
\end{cases}
 \enm
\end{corl}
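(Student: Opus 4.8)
The plan is to obtain Corollary~\ref{corl-w} as the specialization $x\to p$ of Theorem~\ref{thm-f}, following the template by which Corollaries~\ref{corl-b} and~\ref{corl-d} were extracted from Theorem~\ref{thm-a} (I take $p\ge 3$, so that the rational prefactor $\tfrac{(n+n^2)(n+n^2-x)}{2(1-x)(2-x)}$ is regular at $x=p$, matching the $p(p-1)(p-2)$ in the denominator). First I would treat the left-hand side of Theorem~\ref{thm-f}. Since $p$ is a nonnegative integer the summand $1/\binom{p+k}{k}$ is regular there, so the left side is continuous at $x=p$; using the elementary identity $\binom{n}{k}\binom{n+k}{k}/\binom{p+k}{k}=\binom{n+k}{k}\binom{p+n}{n-k}/\binom{p+n}{n}$ together with $H_k(p)=H_{p+k}-H_p$, the left side becomes $\binom{p+n}{n}^{-1}\sum_{k}(-1)^k\binom{n+k}{k}\binom{p+n}{n-k}k^2(H_{p+k}-H_p)^2$. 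Expanding the square splits this into the target sum $\sum(-1)^k\binom{n+k}{k}\binom{p+n}{n-k}k^2H_{p+k}^2$ plus companion sums weighted by $k^2H_{p+k}$ and by $k^2$.

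Next I would dispose of the companion sums. The pure $k^2$ sum is the $y=x=p$ case of \eqref{whipple-g}, whose prefactor carries the factor $\binom{-x+n}{n}=\binom{n-p}{n}$; since $0<p\le n$ this binomial vanishes, so $\sum_{k}(-1)^k\binom{n+k}{k}\binom{p+n}{n-k}k^2=0$ and the $H_p^2$ term disappears. The $k^2H_{p+k}$ companion sum is obtained by applying $\mathcal{D}_x$ once to that same closed form (the first-order analogue of the two differentiations used to build Theorem~\ref{thm-f}) and letting $x\to p$; because $\binom{n-x}{n}$ has a simple zero at $x=p$, only the term in which it is differentiated survives, giving a clean rational value. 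The $H_p$-dependence that this introduces, namely the $-2H_p$ prefactor here together with the $H_p$ entering the limit below through $H_n(p)=H_{n+p}-H_p$ and $H_n^{\langle2\rangle}(p)=H_{n+p}^{\langle2\rangle}-H_p^{\langle2\rangle}$, must ultimately cancel, leaving the $H_p$-free right-hand side recorded in the corollary.

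The main obstacle is the limit $\lim_{x\to p}$ of the right-hand side of Theorem~\ref{thm-f}, which is a removable $0\cdot\infty$ indeterminacy. The prefactor $\binom{-x+n}{n}/\binom{x+n}{n}$ vanishes to first order at $x=p$, while the braced expression contains $H_n(-x)$ and $H_n^{\langle2\rangle}(-x)$, which there have a simple and a double pole respectively (coming from the $j=p$ term of $\sum_{j=1}^n 1/(j-x)$ and $\sum_{j=1}^n 1/(j-x)^2$), on top of the finite rational pieces $U_n(x),V_n(x),W_n(x)$ and the half-argument numbers $H_{n+1}(\tfrac{x-n-2}{2})$, $H_n(\tfrac{x-n}{2})$. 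I would split each singular harmonic number into its regular part plus its polar part, write the regular parts at $x=p$ as ordinary harmonic numbers $H_{n+p},H_{n-p},H_{p-1}$, then evaluate the indeterminate product by L'H\^{o}spital's rule (a single differentiation suffices, the zero being simple), exactly the device used in the earlier proofs; the residual double pole must be checked to cancel against $W_n(x)$ and the rational terms, and this bookkeeping is the delicate part.

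Finally, the parity split in $E(p,n)$ arises solely from the half-argument numbers: at $x=p$ the arguments $\tfrac{p-n}{2}$ and $\tfrac{p-n-2}{2}$ are integers or half-integers according as $n-p$ is even or odd, and the duplication formula for $H$ converts $H_{n+1}(\tfrac{p-n-2}{2})$ and $H_n(\tfrac{p-n}{2})$ into the two alternatives $H_{(p+n)/2},H_{(p-n)/2}$ versus $H_{(p+n-1)/2},H_{(p-n-1)/2}$ displayed in the two branches. Assembling the limit of the right-hand side, the evaluated $k^2H_{p+k}$ companion sum, and the vanishing $k^2$ sum—and verifying that all $H_p$ and $H_p^{\langle2\rangle}$ contributions cancel and the shift $H_{p-1}\to H_{p-3}$ emerges from the combination—then yields the stated closed form. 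This mirrors the structure of Theorem~\ref{thm-f}'s own proof, so the corollary could equivalently be assembled from the $x\to p$ limit of the $\mathcal{D}_x^2$-differentiated $k^2$ identity minus Corollary~\ref{corl-j}, which is why the order-two harmonic numbers ultimately drop out.
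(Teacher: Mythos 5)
Your proposal is correct and follows essentially the same route as the paper, which gives no written proof of Corollary \ref{corl-w} beyond the instruction to let $x\to p$ in Theorem \ref{thm-f}: your limit analysis, the vanishing of the elementary $k^2$ sum via the Whipple-derived closed form, the evaluation of the $k^2H_{p+k}$ companion sum by a single differentiation of that closed form, and the parity split coming from the half-argument harmonic numbers are exactly the bookkeeping the paper leaves implicit, on the model of its written proofs of Corollaries \ref{corl-b} and \ref{corl-d}. One small correction to your inventory of singularities: when $n\equiv p\pmod 2$ the half-argument numbers $H_n(\tfrac{x-n}{2})$ and $H_{n+1}(\tfrac{x-n-2}{2})$, as well as $H_{n-2}(2-x)$ for $p\ge 3$, also have simple poles at $x=p$ (not just $H_n(-x)$ and $H_n^{\langle2\rangle}(-x)$), so they must enter your regular-plus-polar splitting as well — the paper's parity-adapted rewritings in the proofs of Corollaries \ref{corl-b} and \ref{corl-d} show precisely how — and your standing assumption $p\ge 3$ is indeed forced by the factor $p(p-1)(p-2)$ in the stated formula.
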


\begin{corl}[Harmonic number identity] \label{corl-x}
 \bnm
 \quad\sum_{k=0}^n(-1)^k\binm{n}{k}k^2H_{n+k}^2=\frac{n^2(n+1)}{(n-1)(n-2)}
 \frac{1}{\binm{2n}{n}}\Big\{H_{n}-H_{2n}-\tfrac{3n+1}{n^2}-\tfrac{5n^2-5n-4}{n^3-2n^2-n+2}\Big\}.
 \enm
\end{corl}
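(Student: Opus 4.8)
The plan is to deduce Corollary \ref{corl-x} as the specialization $p=n$ of Corollary \ref{corl-w}, in exactly the way that Corollaries \ref{corl-g} and \ref{corl-k} arise from their immediate predecessors. Since $0<n\le n$, Corollary \ref{corl-w} is applicable at $p=n$, and because $p-n=0$ is even, the first branch of the function $E(p,n)$ is the relevant one. So first I would set $p=n$ throughout Corollary \ref{corl-w} and then reconcile both sides with the shape claimed in Corollary \ref{corl-x}.

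To handle the left-hand side I would invoke the elementary binomial identity
\[
\binom{n+k}{k}\binom{2n}{n-k}=\frac{(2n)!}{k!\,n!\,(n-k)!}=\binom{2n}{n}\binom{n}{k},
\]
which turns the sum $\sum_{k=0}^n(-1)^k\binom{n+k}{k}\binom{2n}{n-k}k^2H_{n+k}^2$ of Corollary \ref{corl-w} at $p=n$ into $\binom{2n}{n}\sum_{k=0}^n(-1)^k\binom{n}{k}k^2H_{n+k}^2$, i.e.\ exactly $\binom{2n}{n}$ times the left-hand side of Corollary \ref{corl-x}.

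For the right-hand side I would evaluate the prefactor and the braced factor at $p=n$. Using $n+n^2-n=n^2$, $\binom{n}{n}=1$ and $(-1)^{n-p+1}=-1$, the prefactor $\tfrac{(n+n^2)(n+n^2-p)}{p(p-1)(p-2)}\,\tfrac{(-1)^{n-p+1}}{\binom{n}{p}}$ collapses to $-\tfrac{n^2(n+1)}{(n-1)(n-2)}$. Inside the braces one has $H_{n-p}=H_0=0$, and a short computation gives $E(n,n)=H_n+\tfrac{1+2n+2n^2}{n^2(n+1)}$, so the braced factor becomes $H_{2n}-2H_{n-3}+H_n+\tfrac{1+2n+2n^2}{n^2(n+1)}$. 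Rewriting $H_{n-3}=H_n-\tfrac1n-\tfrac1{n-1}-\tfrac1{n-2}$ collects the genuinely harmonic contribution into $H_{2n}-H_n$, which after the overall minus sign supplies the $H_n-H_{2n}$ of the claim, and leaves a purely rational remainder.

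The main obstacle is the final rational-function reduction, namely verifying
\[
2\Bigl(\tfrac1n+\tfrac1{n-1}+\tfrac1{n-2}\Bigr)+\frac{1+2n+2n^2}{n^2(n+1)}
=\frac{3n+1}{n^2}+\frac{5n^2-5n-4}{n^3-2n^2-n+2},
\]
for which one factors $n^3-2n^2-n+2=(n-1)(n-2)(n+1)$ and clears the common denominator $n^2(n-1)(n-2)(n+1)$. This step is routine but demands care, since several partial fractions of comparable size must cancel to produce precisely the two rational terms of the statement. Combining this identity with the two reductions above and dividing through by $\binom{2n}{n}$ yields Corollary \ref{corl-x} exactly; the denominators $(n-1)(n-2)$ make clear that the result holds for $n\ge 3$.
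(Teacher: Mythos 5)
Your proposal is correct and matches the paper's intended derivation: the paper obtains Corollary \ref{corl-x} precisely as the $p=n$ instance of the $x\to p$ specialization of Theorem \ref{thm-f} (i.e.\ of Corollary \ref{corl-w}), following the pattern made explicit in Corollary \ref{corl-c}. Your computations check out — $E(n,n)=H_n+\tfrac{1+2n+2n^2}{n^2(n+1)}$, the prefactor collapses to $-\tfrac{n^2(n+1)}{(n-1)(n-2)}$, and both sides of your rational identity reduce to the numerator $8n^4-10n^3-9n^2+5n+2$ over $n^2(n-1)(n-2)(n+1)$ — so the argument is complete (for $n\geq 3$, as you note).
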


\begin{corl} \label{corl-y}
Let $p$ be a positive integer with $p>n$. Then
 \bnm
&&\sum_{k=0}^n(-1)^k\binm{n+k}{k}\binm{p+n}{n-k}k^2H_{p+k}^{2}=\frac{(n+n^2)(n+n^2-p)}{2(p-1)(p-2)}\binm{p-1}{n}\\
 &&\:\times\:\Big\{H_{p+n}^{\langle2\rangle}+H_{p-n}^{\langle2\rangle}-2H_{p-3}^{\langle2\rangle}
 +2\Big(H_{p+n}+H_{p-n-1}-H_{p-3}+\tfrac{2}{n+n^2-p}\Big)\\
 &&\:\times\:(H_{p+n}+H_{p-n-1}-H_{p-3})-C(p,n)\big[C(p,n)+D(p,n)\big]\\
 &&\:-\:\tfrac{2[1-2n(1+n)+(2+n)p-p^2]}{(1+n)(n+n^2-p)(n-p)}-\tfrac{2}{(p-n)^2}\Big\},
 \enm
where the expressions $C(p,n)$ and $D(p,n)$ on the right hand side
can be seen in Corollary \ref{corl-l}.
\end{corl}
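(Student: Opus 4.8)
The plan is to obtain Corollary \ref{corl-y} as the $x=p$ instance of Theorem \ref{thm-f}, exactly as Corollary \ref{corl-l} is the $x=p$ instance of Theorem \ref{thm-c}. First I would record the two reductions valid at an integer argument: $\binom{n}{k}\binom{n+k}{k}/\binom{p+k}{k}=\binom{n+k}{k}\binom{p+n}{n-k}/\binom{p+n}{n}$, together with $H_k(p)=H_{p+k}-H_p$ and $H_k^{\langle2\rangle}(p)=H_{p+k}^{\langle2\rangle}-H_p^{\langle2\rangle}$. Substituting these into the left-hand side of Theorem \ref{thm-f} turns it into $\binom{p+n}{n}^{-1}\sum_k(-1)^k\binom{n+k}{k}\binom{p+n}{n-k}k^2(H_{p+k}-H_p)^2$, so the target sum $\sum_k(-1)^k\binom{n+k}{k}\binom{p+n}{n-k}k^2H_{p+k}^2$ differs from it only through the expansion $(H_{p+k}-H_p)^2=H_{p+k}^2-2H_pH_{p+k}+H_p^2$. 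Since $p>n$ forces $\binom{p-1}{n}\neq0$, no limit is needed and I may substitute $x=p$ directly.

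The conversion then requires the two lower-weight companion sums $T_0=\sum_k(-1)^k\binom{n+k}{k}\binom{p+n}{n-k}k^2$ and $T_1=\sum_k(-1)^k\binom{n+k}{k}\binom{p+n}{n-k}k^2H_{p+k}$. Both follow from the $y=x$ specialization of \eqref{whipple-g}, i.e. the single-variable identity $F(x)=\sum_k(-1)^k\binom{n}{k}\binom{n+k}{k}k^2/\binom{x+k}{k}=g(x)$ with $g(x)=(-1)^n\frac{(n+n^2)(n+n^2-x)}{(x-1)(x-2)}\frac{\binom{-x+n}{n}}{\binom{x+n}{n}}$. Evaluating at $x=p$ gives $T_0=\frac{(n+n^2)(n+n^2-p)}{(p-1)(p-2)}\binom{p-1}{n}$, while $\mathcal{D}_xF(x)=-\sum_k(-1)^k\binom{n}{k}\binom{n+k}{k}k^2H_k(x)/\binom{x+k}{k}$ (using $\mathcal{D}_x\binom{x+k}{k}^{-1}=-H_k(x)\binom{x+k}{k}^{-1}$) yields, after setting $x=p$ and using $\mathcal{D}_x\log g$, the clean evaluation $T_1=T_0\bigl(H_{p+n}+H_{p-n-1}-H_{p-3}+\tfrac{1}{n+n^2-p}\bigr)$. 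The combination $A:=H_{p+n}+H_{p-n-1}-H_{p-3}$ that surfaces here is precisely the factor displayed in the statement, which is the first indication that the bookkeeping closes up.

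Next I would evaluate every generalized harmonic number on the right-hand side of Theorem \ref{thm-f} at $x=p$. The integer-shift ones reduce cleanly, $H_n(p)=H_{p+n}-H_p$, $H_n(-p)=H_{p-n-1}-H_{p-1}$, $H_{n-2}(2-p)=H_{p-n-1}-H_{p-3}$, and likewise for the second-order analogues; in particular the new quadratic block $2\bigl[H_n(x)+H_{n-2}(2-x)+\tfrac{2}{n+n^2-x}\bigr]\bigl[H_n(x)+H_{n-2}(2-x)\bigr]$ becomes $2(A-H_p+\tfrac{2}{n+n^2-p})(A-H_p)$. The terms that literally repeat the structure of Theorem \ref{thm-c}, namely those built from $U_n,V_n,W_n$, $H_n(\tfrac{x-n}{2})$ and $H_{n+1}(\tfrac{x-n-2}{2})$, collapse to exactly $-C(p,n)[C(p,n)+D(p,n)]$ together with the rational constant of Corollary \ref{corl-l}; the half-integer arguments $\tfrac{p-n}{2}$ and $\tfrac{p-n-2}{2}$ are what force the even/odd split housed inside $C(p,n)$.

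Finally I would substitute $T_0,T_1$ into $H_{p+k}^2=(H_{p+k}-H_p)^2+2H_pH_{p+k}-H_p^2$ and collect. A short computation shows that $2H_pT_1-H_p^2T_0$ exactly cancels the spurious $-H_p$ in the factor $A-H_p$, upgrading the quadratic block to the clean $2(A+\tfrac{2}{n+n^2-p})A$ of the statement; simultaneously the assorted rational pieces, $W_n(p)$, the term $\frac{2(2p^2-6p+5)}{(p-1)^2(p-2)^2}$ coming from $\frac{2(2x^2-6x+5)}{(1-x)^2(2-x)^2}$, and the residue of $D(p,n)$, must consolidate into $-\frac{2[1-2n(1+n)+(2+n)p-p^2]}{(1+n)(n+n^2-p)(n-p)}-\frac{2}{(p-n)^2}$. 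I expect this last consolidation to be the only genuine labor: it is purely routine rational-function and harmonic-number algebra but voluminous, and one must separately confirm that the apparent poles at $p=1,2$ from $(p-1)(p-2)$ in the prefactor are removable, so that the small-$p$ cases are recovered by continuity.
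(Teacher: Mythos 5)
Your proposal is correct and follows essentially the same route as the paper: the paper obtains Corollary \ref{corl-y} precisely by specializing Theorem \ref{thm-f} at $x=p$ (legitimately without a limit since $p>n$), reducing the generalized harmonic numbers at integer arguments, and evaluating the companion sums via the $y=x$ specialization \eqref{whipple-g} of Lemma \ref{lemm-d} and its derivative, exactly as in its worked-out analogues such as Corollaries \ref{corl-b} and \ref{corl-d}. Your explicit computation of $T_0$ and of $T_1=T_0\bigl(H_{p+n}+H_{p-n-1}-H_{p-3}+\tfrac{1}{n+n^2-p}\bigr)$, and the resulting cancellation of the $H_p$ cross terms, correctly fills in the bookkeeping the paper leaves implicit.
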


 \textbf{Acknowledgments}

 The work is supported by the National Natural Science Foundations of China (Nos. 11661032, 11301120).



\end{document}